\documentclass[preprint,10pt]{elsarticle}

\usepackage{amsmath,amssymb,amsfonts}
\usepackage{amsthm}

\usepackage{graphicx}
\usepackage{booktabs}
\usepackage{float}
\usepackage{tabularx}
\usepackage{array} 

\newcolumntype{Y}{>{\centering\arraybackslash}X}

\usepackage{xcolor}
\usepackage[protrusion=true,expansion=false]{microtype}
\usepackage{tikz}
\usetikzlibrary{arrows,shapes,patterns,positioning,calc,arrows.meta}

\usepackage{hyperref}
\hypersetup{
  colorlinks=true,
  citecolor=blue,
  urlcolor=blue,
  linkcolor=blue
}

\theoremstyle{plain}
\newtheorem{theorem}{Theorem}[section]

\newtheorem{proposition}[theorem]{Proposition}

\theoremstyle{definition}

\theoremstyle{remark}
\newtheorem{remark}{Remark}[section]

\journal{Mathematical Biosciences}

\begin{document}

\begin{frontmatter}

\title{Logistic Gene Regulatory Networks: A Modelling Framework Beyond Hill Functions}

\author[inst1]{Ismail Belgacem}
\ead{ismail.belgacem.81@gmail.com}

\address{Mezaourou, Ghazaouet 13421, Tlemcen, Algeria}

\begin{abstract}
Boolean network models are a widely used framework for describing gene
regulatory networks across many biological systems, from the mammalian cell
cycle to cancer-signalling and developmental decision circuits. Extracting
quantitative dynamics from such a model---its attractors, basins, and
transition timing---requires translating its logical update rules into a
continuous system of ordinary differential equations, and the sigmoidal kernel
chosen for this translation is a modelling decision with direct biological
consequences. The near-universal choice, the Hill function, sets production to
exactly zero when an activator is absent; yet genes are never fully silent, so
this idealisation introduces a spurious absorbing off-state with no biological
counterpart. We develop a general product-of-logistics framework in which
increasing logistic functions represent activation, decreasing logistic
functions represent repression, and a recursive De~Morgan product formula
translates an arbitrary Boolean rule---conjunctions, disjunctions, and
negations---into a continuous regulatory function. The translation is
automatic, confines every regulatory function to the unit interval, and retains
a strictly positive basal rate. Our central result is a recovery theorem:
every steady state of the Boolean network reappears, for sufficiently steep
regulatory response, as an exponentially stable equilibrium of the continuous
model, with the discrete labels $0$ and $1$ realised as basal and saturated
concentrations, so the translation provably refines, rather than distorts, the
original Boolean analysis. We establish the
analytical foundations the framework requires---global well-posedness, forward
invariance, an explicit Lipschitz constant, and, for the two canonical
two-gene motifs, both the global asymptotic stability of the negative-feedback
oscillator and a closed-form bistability threshold for the genetic toggle
switch---and we show that every regulator threshold remains a positive,
experimentally measurable concentration, unlike weighted-sum logistic
formulations that place repressor thresholds at biologically meaningless
negative values. The eleven-gene Traynard mammalian cell-cycle network is
translated automatically and integrated: in the proliferative regime its
trajectories settle onto a sustained limit cycle that reproduces the cyclic
attractor of the underlying Boolean model. Because the translation is purely
structural, the same procedure applies without modification to existing Boolean
models of cancer signalling and developmental transitions, and the framework
further supports exact feedback linearisation for control design.
\end{abstract}

\begin{keyword}
gene regulatory networks \sep logistic functions \sep
Boolean networks \sep De~Morgan formalism \sep multi-gene systems \sep
well-posedness \sep feedback linearisation
\end{keyword}

\end{frontmatter}

\section{Introduction}
\label{sec:introduction}

Gene regulatory networks (GRNs) constitute the control architecture of living
cells, orchestrating the spatiotemporal patterns of gene expression that
underlie development, homeostasis, and adaptation to environmental change.
For many specific biological systems, the structure of these networks is
captured by Boolean models, in which each gene carries a logical update rule
over its regulators. This discrete formalism has produced validated,
predictive descriptions of processes as diverse as the mammalian cell
cycle~\citep{Traynard2016}, breast-cancer signalling~\citep{biane2018causal},
the ERBB-regulated G1/S transition~\citep{Sahin2009}, S-phase entry and
senescence~\citep{Verlingue2016}, the epithelial-to-mesenchymal
transition~\citep{Cohen2015}, the pro-inflammatory microenvironment of acute
lymphoblastic leukaemia~\citep{Enciso2016}, and developmental
patterning~\citep{albert2003topology}. A Boolean model identifies the
attractors of a network, but it cannot by itself resolve graded expression
levels, transition timing, parameter sensitivities, or responses to
continuously varying inputs; obtaining these requires translating the logical
update rules into a continuous system of ordinary differential equations, a
step now routine in quantitative systems biology. The mathematical analysis of
such continuous models---the global stability of gene-expression and enzymatic
reaction dynamics~\citep{belgacem2012global,belgacem2013enzymatic,belgacem2014concave},
model reduction for coupled transcription--translation
systems~\citep{belgacem2013stability,belgacem2013analysis,belgacem2014coupled,belgacem2014stability,belgacem2018reduction},
and the control of genetic feedback
loops~\citep{belgacem2020control,chambon2020qualitative}---provides the
methodological foundation on which the present logistic framework builds. The sigmoidal kernel chosen
for that translation is not a neutral technical detail: it determines whether
the continuous model faithfully refines the Boolean picture or silently
distorts it, and the same choice governs the emergent dynamical
behaviours---oscillations, bistability, multistability, and
chaos~\citep{farcot2019chaos,belgacem2025glass}---that the continuous model is
built to explain. The near-universal choice of kernel, both for
Boolean-to-ODE translation and for sigmoidal GRN modelling more broadly, is
the Hill function, a formulation introduced over a century ago to describe
cooperative ligand binding.
Its activation form $h^+(x,\theta,n) = x^n/(x^n+\theta^n)$ and repression
counterpart $h^-(x,\theta,n) = \theta^n/(x^n+\theta^n)$ are intuitive,
mechanistically grounded, and have been applied successfully to models of the
lambda phage lysis--lysogeny decision, the \textit{lac} and \textit{gal}
operons in \textit{Escherichia coli}, developmental patterning in
\textit{Drosophila}, and mammalian cell-cycle control, among many others.
The Hill coefficient $n$ quantifies cooperativity, $\theta$ denotes the
half-maximal concentration, both carry clear physical interpretations, and the
sigmoidal shape faithfully captures the switch-like regulatory transitions
characteristic of biological decision-making.

Yet the Hill function carries structural liabilities, and the first is a
matter of biological fidelity. Experimental studies across bacterial operons,
eukaryotic promoters, and synthetic circuits consistently show that genes are
never fully silent: even under strong repression, a small basal transcription
persists~\citep{becskei2000engineering,lipshtat2006genetic,weickert1993galactose}.
The Hill function cannot represent this, because $h^+(0,\theta,n)=0$ for every
$n>0$, so any gene whose activator is momentarily absent is assigned a
production rate of exactly zero. In a bistable circuit this manufactures an
absorbing off-state with no biological counterpart, and in a Boolean-derived
network it freezes, from the first integration step, every target whose
activator starts below threshold---so the continuous model can settle into
attractors that the underlying biology does not possess. The remaining two
liabilities are more technical. The second is
analytical: when the cooperativity exponent $n$ takes a non-integer
value---as it routinely does when fitting experimental dose-response data,
yielding values such as $n\approx1.39$, $2.73$, or
$3.52$~\citep{gottschalk2005five,reeve2013pharmacodynamic,santillan2008use}---the
power-law form $x^n$ is only $C^{\lfloor n\rfloor}$-smooth at the origin, so
higher-order analytical tools such as centre-manifold reduction and normal-form
analysis are unavailable, and, although the rational form
$x^n/(x^n+\theta^n)$ does invert in closed form, it does so only through a
fractional power $\theta\,(y/(1-y))^{1/n}$ that carries the same root
non-smoothness into any feedback-linearising control law. The third is
parametric: the Hill maximum slope $n/(4\theta)$ entangles cooperativity and
threshold, so the two cannot be tuned independently.

The logistic function resolves all three issues simultaneously.
Its activation form $f^+(x,\theta,\lambda) = 1/(1+e^{-\lambda(x-\theta)})$ and
repression form $f^-(x,\theta,\lambda) = 1/(1+e^{\lambda(x-\theta)})$ are
globally $C^\infty$, real-valued for all arguments including negative ones, and
strictly positive at zero concentration for all finite $\lambda$ and $\theta$.
The self-referential derivative identity $f' = \lambda f(1-f)$ reduces Jacobian
entries to products of function values, eliminating fractional exponents from
stability analysis entirely.
The closed-form logit inverse $f^{-1}(y) = \theta + \lambda^{-1}\ln(y/(1-y))$
enables exact feedback linearisation.
The parameters $\theta$ and $\lambda$ are fully decoupled: the threshold can be
repositioned without altering the transition slope, and vice versa; both map
directly to biologically measurable quantities---$\theta$ to dissociation
constants or half-maximal effective concentrations, and $\lambda$ to effective
cooperativity.
Most importantly, the non-zero output at $x=0$ means that basal expression is
built into the function's shape rather than appended to it.

Logistic functions have a long and productive history in statistics, machine
learning, and neural-network approximation theory, and have recently appeared
in GRN modelling through the work of~\citet{samuilik2022mathematical}, who used
a single increasing sigmoid for all regulatory interactions, with signed
weights encoding direction.
While this provides a unified formalism with attractive mathematical
properties, it forces both activation and repression through the same
functional form. For repression in particular, this leads to critical points
at biologically meaningless negative concentrations and a systematic failure to
approach unity under unrepressed conditions---pathologies that arise not from
biological necessity but from the modelling choice itself.
The present paper takes a fundamentally different approach: by deploying
increasing logistic functions for activation and decreasing logistic functions
for repression, each precisely where it is biologically appropriate, we
preserve the distinct sigmoidal dynamics of each regulatory mode.
We show that increasing and decreasing sigmoids can be combined in a product
that naturally encodes AND combinatorial logic, and we compare this formulation
in detail with the Samuilik weighted-sum alternative.

This paper develops the modelling framework and establishes its core
analytical properties. After introducing the logistic formulation
(Section~\ref{sec:modeling}), we analyse the two-gene negative-feedback
oscillator and prove, via Jacobian analysis combined with Bendixson's negative
criterion and the Poincar\'e--Bendixson theorem, that it is globally
asymptotically stable and cannot undergo a Hopf bifurcation
(Theorem~\ref{thm:no_hopf}), so that sustained limit cycles require explicit
time delays. The complementary mutual-repression motif, the genetic toggle
switch of~\citet{gardner2000construction}, is analysed in the same closed form
(Section~\ref{ex:toggle}): a single inequality, $\rho\lambda>4$ in the
symmetric case, separates monostable from bistable behaviour, and the
transition is a supercritical pitchfork. We then formulate the general
multi-gene system as a
product-of-logistics map; Proposition~\ref{prop:demorgan} establishes the three
structural properties of the recursive De~Morgan map---range, Boolean
consistency, and De~Morgan duality---and Proposition~\ref{prop:wellposedness}
establishes global well-posedness, forward invariance, and an explicit global
Lipschitz constant, and Theorem~\ref{thm:boolean_recovery} proves that every
steady state of the Boolean network is recovered, for sufficiently steep
regulatory response, as an exponentially stable equilibrium of the continuous
system. The $11$-gene Traynard mammalian cell-cycle
network~\citep{Traynard2016} is translated automatically into a continuous ODE
system that integrates without warnings and, in the proliferative regime,
settles onto a sustained limit cycle reproducing the cyclic attractor of the
Boolean model; because the De~Morgan translation is purely structural, the same
automatic procedure applies without modification to the other documented Boolean
GRN models cited above, from cancer-signalling networks to developmental
decision circuits. Section~\ref{sec:weight_equivalence}
proves that incorporating explicit interaction weights changes nothing after a
parameter rescaling, and Section~\ref{sec:comparison_literature} compares the
product-of-logistics formulation with the Samuilik weighted-sum alternative
across the AND, OR, and NOR logic gates. Finally, Section~\ref{sec:control}
turns to control design: it shows that the always-positive logistic production
rate removes the controllability gaps that Hill-based models exhibit at zero
concentration, and that the closed-form logit inverse supports an exact
feedback-linearisation construction with provable exponential tracking
(Proposition~\ref{prop:tracking}).

A companion paper~\citep{belgacem2026numerical} builds on the framework
established here to address the biological and computational consequences of
the logistic choice: the prevention of expression shutdown in low-expression
regimes, and the numerical reliability of Boolean-derived ODE integration
relative to Hill functions with non-integer exponents. Taken together, the two papers establish
logistic functions not as a minor variation on an established theme but as a
principled, analytically tractable, and biologically faithful foundation for
GRN modelling.

\section{The Logistic Modelling Framework}
\label{sec:modeling}

Gene regulatory networks exhibit inherently nonlinear dynamics across multiple coupled components, characterised by sigmoidal activation and repression functions, feedback loops generating bistability and oscillations, threshold-activated switches enabling binary cellular decisions, and saturating responses reflecting finite molecular resources~\citep{belgacem2020control,chambon2020qualitative,farcot2019chaos,belgacem2025glass}. Realistic models routinely involve hundreds to thousands of interacting genes whose dynamics span molecular binding events on millisecond timescales, cellular differentiation processes unfolding over hours to days, and population-level dynamics evolving across generations, while molecular noise arising from low copy numbers (typically $10$--$1000$ transcription factor molecules per cell) and cell-to-cell variability in isogenic populations further complicates the picture. This section develops the logistic alternative to Hill functions and demonstrates that it resolves the structural pathologies of the latter while preserving its biological content.

\subsection{Limitations of Hill-Function-Based Models}
\label{sec:hill_pathologies}

For decades, Hill functions have dominated biological modelling, valued for
their mechanistic foundation in equilibrium binding theory and their ability
to encode cooperative molecular binding through fractional
exponents~\citep{bernot2012modeling,polynikis2009comparing,bottani2017hill,kim2011stochastic}.
The increasing form $h^+(x, \theta, n) = \frac{x^n}{\theta^n + x^n}$ and the
decreasing repression form $h^-(x, \theta, n) = \frac{\theta^n}{\theta^n + x^n}$
have been applied successfully across bacterial gene circuits, mammalian
signalling pathways, metabolic regulation, and synthetic biology
applications~\citep{belgacem2020control,bernot2012modeling,kim2011stochastic,polynikis2009comparing}.
The Hill coefficient $n$ admits both phenomenological and mechanistic
interpretations: operationally, experimentalists extract $n$ from sigmoidal
dose-response fits as a measure of response steepness and local input-output
sensitivity, while mechanistically $n$ approximates the number of interacting
binding sites or the degree of cooperativity.

The \textit{lac} operon illustrates both the utility and the limits of this
picture. Simple LacI repression at a single operator is described by $n=1$;
the observed $>1000$-fold repression arises from DNA looping between the main
and auxiliary operators, captured by a distinct thermodynamic expression rather
than a large Hill coefficient~\citep{ozbudak2004multistability,madar2011negative,oehler1994quality}.
More generally, any Hill coefficient $n>1$ from dose-response fitting is a
phenomenological summary of the overall network response rather than a direct
mechanistic count of binding steps. This understanding informs synthetic biology
applications, where engineered toggle switches and oscillators in \textit{E.~coli}
and yeast rely on cooperative binding (typically $n=2$--$4$) to achieve bistable
memory devices and biosensors~\citep{gardner2000construction,elowitz2000synthetic}.

However, this classical choice carries substantial hidden costs that severely
limit mathematical analysis and computational
efficiency~\citep{belgacem2025exploring}. When Hill models are fitted to
experimental data, the resulting Hill coefficients frequently assume non-integer
values, reflecting incomplete cooperativity, heterogeneous binding-site
occupancy, or complex allosteric
mechanisms~\citep{gottschalk2005five,reeve2013pharmacodynamic,santillan2008use,abeliovich2005empirical}.
This transition from integer to non-integer $n$ precipitates a loss of
mathematical structure. Hill functions are only $C^{\lfloor n \rfloor}$-smooth:
for $n \in (k, k+1)$, derivatives of order greater than $k$ diverge at the
origin, restricting the applicability of centre manifold theory, normal form
analysis, and higher-order perturbation methods, and causing step-size
inflation and stalling in adaptive ODE solvers.

The derivative of the activation Hill function involves fractional powers
that resist symbolic manipulation: $\frac{dh^+}{dx} = \frac{n\theta^n
x^{n-1}}{(\theta^n + x^n)^2}$. For non-integer $n$, computing $x^n$ requires
the transcendental composition $e^{n \ln x}$, which accumulates floating-point
error near zero and becomes complex-valued whenever any trajectory
component overshoots to a negative concentration, as every adaptive ODE solver
will produce from rounding errors alone. The inverse $\theta\,(y/(1-y))^{1/n}$
is itself available in closed form, but only as a fractional power, so any
feedback-linearising control law inherits the same root evaluations---non-smooth
at the origin and complex-valued for negative arguments---rather than the
elementary inverse the logistic provides. A rigorous mathematical analysis of these
limitations is provided in~\citep{belgacem2025exploring}.

\subsection{The Logistic Formulation}
\label{subsec:logistic_formulation}

Logistic and Hill functions both generate smooth sigmoidal response curves with tunable steepness and threshold, but the logistic form achieves this biological fidelity without the analytical pathologies of Hill: no fractional exponents, no power-law singularities at the boundary, and no ill-conditioned numerical behaviour~\citep{belgacem2025exploring}. We model gene activation by the increasing logistic
\begin{equation}
    f^+(x, \theta, \lambda) = \frac{1}{1 + e^{-\lambda (x - \theta)}},
    \label{eq:logistic_activation}
\end{equation}
and gene repression by the decreasing logistic
\begin{equation}
    f^-(x, \theta, \lambda) = \frac{1}{1 + e^{\lambda (x - \theta)}}
    = \frac{1}{1 + e^{-\lambda (\theta - x)}},
    \label{eq:logistic_repression}
\end{equation}
each deployed precisely where it is biologically appropriate. The full mathematical analysis of these functions, including global $C^\infty$ smoothness, real-valuedness for all arguments, and the closed-form logit inverse, is established in~\citep{belgacem2025exploring}; the present paper assumes these properties and develops their consequences for biological modelling and control.

Two structural features of the logistic form are central to what follows. First, the self-referential derivative identity
\[
\frac{\partial f^+}{\partial x} = \lambda f^+(1 - f^+), \qquad
\frac{\partial f^-}{\partial x} = -\lambda f^-(1 - f^-)
\]
reduces every Jacobian entry to a product of function values and so eliminates the fractional power evaluations that make Hill Jacobians ill-conditioned near zero. Second, the closed-form inverse
\[
(f^+)^{-1}(y) = \theta + \tfrac{1}{\lambda}\ln\!\left(\tfrac{y}{1-y}\right)
\]
enables exact feedback linearisation through an elementary, fractional-power-free expression; the Hill form does invert, to $\theta\,(y/(1-y))^{1/n}$, but only via a fractional power whose non-smoothness at the origin the logit avoids.

To compare quantitatively with a Hill function of cooperativity~$n$, the steepness parameter is matched by equating slopes at the half-maximal threshold: at $x = \theta$, the Hill slope is $n/(4\theta)$ and the logistic slope reaches its maximum $\lambda/4$, so the rule $\lambda = n/\theta$ makes the two functions locally identical at $x = \theta$ (in value and slope), with global agreement extending only within a neighbourhood of the threshold; away from $x = \theta$ the two responses diverge, with the Hill function evolving along a logarithmic scale and the logistic along a linear scale~\citep{belgacem2025exploring}. The analogous matching applies to the decreasing forms. Crucially, the logistic slope at threshold $\lambda/4$---which is also its global maximum---depends on $\lambda$ \emph{alone}, whereas the Hill slope at threshold $n/(4\theta)$ entangles cooperativity and threshold, so the logistic form alone permits independent tuning of decision threshold and response sensitivity in circuit design and parameter estimation. Table~\ref{table_1} summarises this comparison.

\begin{table}[h!]
\centering
\caption{Principal mathematical distinctions between Hill and logistic GRN
models. Both functions take the same absolute concentration $x$ as input; the
differences are structural, not a matter of ``absolute'' versus ``relative''
scaling.}
\label{table_1}
\small
\begin{tabularx}{\linewidth}{@{}l Y Y@{}}
\toprule
Property & Hill $h^+(x,\theta,n)$ & Logistic $f^+(x,\theta,\lambda)$ \\
\midrule
Value at $x=0$ & $0$ (absorbing state) & $1/(1+e^{\lambda\theta}) > 0$ (basal expression) \\
Smoothness & $C^{\lfloor n \rfloor}$ at $x=0$ (non-integer $n$) & $C^\infty$ everywhere \\
Lipschitz constant & $+\infty$ near $x=0$ if $n<1$; finite for $n\ge1$ & $\lambda/4 < \infty$ globally \\
Sensitivity structure & Fold-change (multiplicative in $x/\theta$) & Additive deviation from $\theta$ \\
Slope at threshold & $n/(4\theta)$ (couples $n$ and $\theta$; below the maximum, which sits at the inflection $x<\theta$) & $\lambda/4$ (independent of $\theta$; also the global maximum) \\
Closed-form inverse & Yes, but a fractional power: $\theta\,(y/(1-y))^{1/n}$ & Yes, elementary logit: $\theta + \lambda^{-1}\ln(y/(1-y))$ \\
Closed-form integral & No elementary form (hypergeometric for non-integer $n$) & Yes: $\lambda^{-1}\ln(1+e^{\lambda(x-\theta)})$ \\
Basal expression & Requires ad hoc offset $\varepsilon$ & Built in; controlled by $\lambda\theta$ \\
Parameter coupling & Steepness $n/\theta$ tied to threshold & $\lambda$ and $\theta$ independent \\
\bottomrule
\end{tabularx}
\end{table}

\subsection{Non-Cooperative Gene Regulatory Networks}
\label{subsec:noncoop}

In the simplest regulatory scenarios, where each gene responds to a single regulator without cooperative binding effects, the logistic framework takes a particularly streamlined form. Here, gene $i$ is regulated by the expression level of gene $x_j$ through a single logistic function, with the regulatory direction (activation or repression) encoded by a sign parameter. The general dynamical model is:
\begin{equation}
\dot{x}_i = \kappa_i \frac{1}{1 + e^{-\sigma_i \lambda (x_j - \theta_i)}} - \gamma_i x_i, \quad i = 1, \ldots, N,
\label{eq:general_logistic_model}
\end{equation}
where $x_i$ denotes the expression level (for instance, the protein concentration) of gene~$i$; $x_j$ is the concentration of the regulatory species (with $j = i-1$ in a sequential cascade, or any other index in the network); $\kappa_i > 0$ and $\gamma_i > 0$ are the maximal production rate and degradation rate, respectively; $\lambda > 0$ controls the steepness of the regulatory response; $\theta_i > 0$ is the regulatory threshold; and $\sigma_i \in \{+1, -1\}$ encodes the regulatory sign, with $\sigma_i = +1$ for activation and $\sigma_i = -1$ for repression.

This formulation embeds both activation and repression within a unified structure, with the sole distinction being the sign $\sigma_i$ that inverts the exponential argument.

\section{Two-Gene Regulatory Motifs}
\label{sec:two_gene}

\subsection{The Two-Gene Oscillator: An Illustrative Example}
\label{ex:oscillator}

As a canonical illustration of the logistic framework, consider one of the most fundamental motifs in biological networks: the two-gene negative-feedback oscillator, which underlies phenomena ranging from circadian rhythms to cell-cycle progression. In its simplest form, the first gene activates the second, which in turn represses the first, creating a cyclical pattern of expression (Fig.~\ref{fig:oscillator}).

\begin{figure}[t]
\begin{center}
\rotatebox{-90}{
\begin{tikzpicture}[font=\small, >=stealth]
    \node[name=A, text centered, shape = circle, shading=ball, color = yellow!20]  at (0,0) {$ \rotatebox{90}{A}$};
    \node[name=B, text centered, shape = circle, shading=ball, ball color = yellow!20]  at (0,2) {$\rotatebox{90}{B}$};
    \draw [-|, color = red, shorten <=1pt, shorten >=1pt, very thick] (B) -- (-0.68,2) -- (-0.68,0) -- (A);
    \draw[->, shorten <=1pt, blue, shorten >=1pt, very thick] (A.75) -- (B.-75);
\end{tikzpicture}
}
\caption{Architecture of a two-gene negative feedback loop. Gene A  activates gene B (blue arrow), while gene B represses gene A (red bar).}
\label{fig:oscillator}
\end{center}
\end{figure}

Deploying our logistic formulation, the dynamics of this system are governed by the coupled ordinary differential equations:
\begin{equation}
\begin{aligned}
\dot{x}_1 &= \kappa_1\, f^-(x_2,\theta_2,\lambda) - \gamma_1 x_1
        \;=\; \kappa_1\, \frac{1}{1 + e^{\lambda (x_2 - \theta_2)}} - \gamma_1 x_1, \\[6pt]
\dot{x}_2 &= \kappa_2\, f^+(x_1,\theta_1,\lambda) - \gamma_2 x_2
        \;=\; \kappa_2\, \frac{1}{1 + e^{-\lambda (x_1 - \theta_1)}} - \gamma_2 x_2,
\end{aligned}
\label{eq:oscillator}
\end{equation}
where $\kappa_1, \kappa_2 > 0$ are the maximal production rates, $\gamma_1, \gamma_2 > 0$ are the degradation rates, $\lambda > 0$ determines the steepness of both regulatory responses, and $\theta_1, \theta_2 > 0$ are the respective threshold concentrations. Gene~1 is repressed by gene~2 via the decreasing logistic $f^-$ (which decreases monotonically as $x_2$ rises), while gene~2 is activated by gene~1 via the increasing logistic $f^+$.

Numerical simulations of this system with parameter values $\lambda = 3$, $\kappa_1 = 3$, $\gamma_1 = 0.25$, $\kappa_2 = 4$, $\gamma_2 = 0.5$, $\theta_1 = 4$, $\theta_2 = 3$, starting from initial conditions $x_1(0) = x_2(0) = 1$, are depicted in Fig.~\ref{fig:Oscillateur_original}. The system exhibits damped oscillations approaching the equilibrium approximately at $(x_1^*, x_2^*) \approx (3.87, 3.25)$.

\begin{figure}[t]
    \centering
    \includegraphics[scale=0.2]{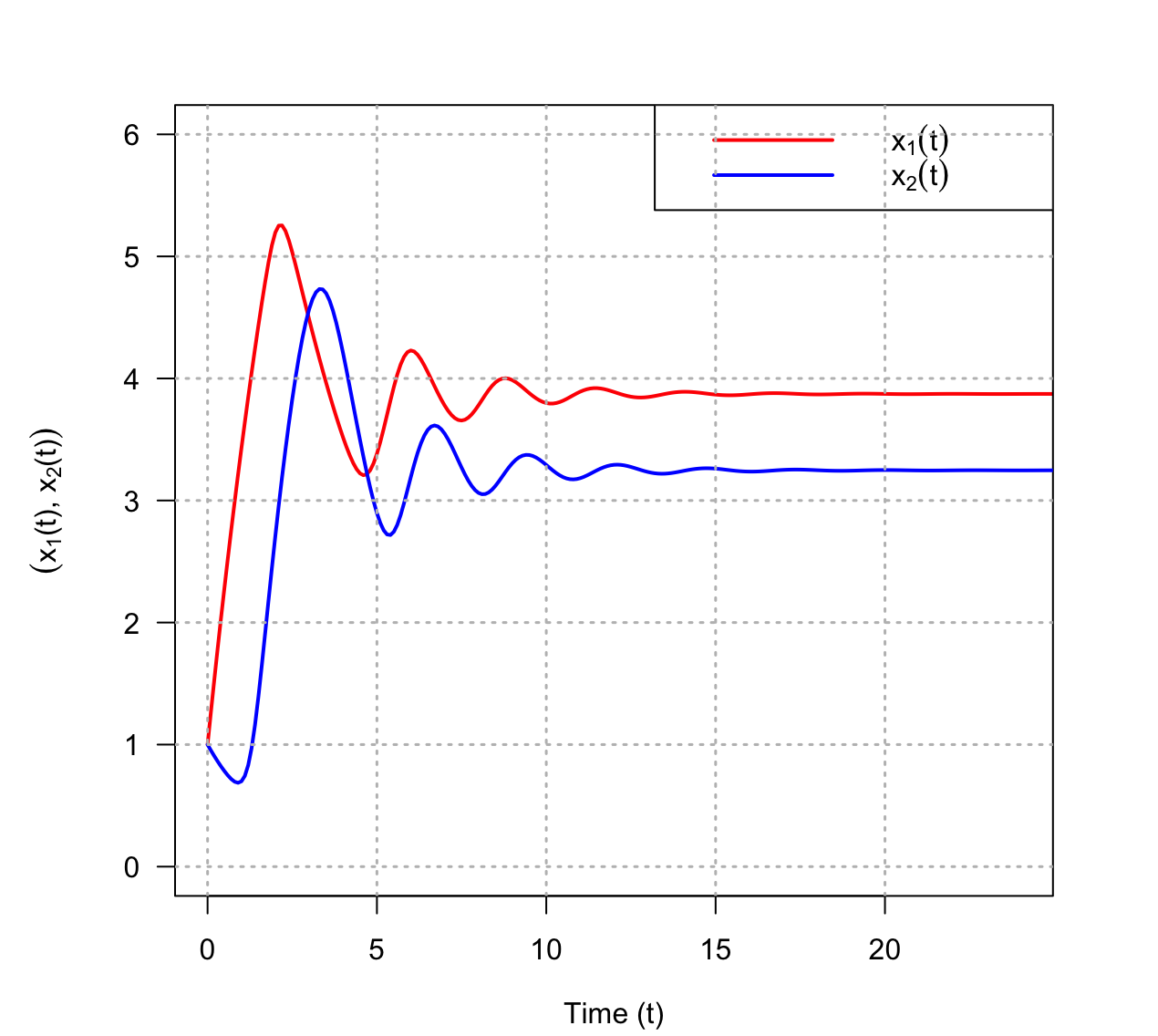}
    \caption{Temporal evolution of the two-gene oscillator system~\eqref{eq:oscillator}. Parameters: $\lambda = 3$, $\kappa_1 = 3$, $\gamma_1 = 0.25$, $\kappa_2 = 4$, $\gamma_2 = 0.5$, $\theta_1 = 4$, $\theta_2 = 3$; initial conditions $x_{01} = x_{02} = 1$.}
    \label{fig:Oscillateur_original}
\end{figure}

\subsubsection{Jacobian Matrix and Local Stability}

To analyse the stability properties of this system, we linearise the dynamics around an equilibrium point $(x_1^*, x_2^*)$ satisfying the steady-state conditions
\[
\kappa_1 f_1(x_2^*) = \gamma_1 x_1^*, \qquad \kappa_2 f_2(x_1^*) = \gamma_2 x_2^*,
\]
where we write $f_1(x_2) \equiv f^-(x_2,\theta_2,\lambda) = \frac{1}{1 + e^{\lambda(x_2 - \theta_2)}}$ (decreasing) and $f_2(x_1) \equiv f^+(x_1,\theta_1,\lambda) = \frac{1}{1 + e^{-\lambda(x_1 - \theta_1)}}$ (increasing) for compactness.

The Jacobian matrix at equilibrium is:
\[
J =
\begin{pmatrix}
-\gamma_1 & \kappa_1 f_1'(x_2^*) \\[4pt]
\kappa_2 f_2'(x_1^*) & -\gamma_2
\end{pmatrix}.
\]

A key advantage of the logistic formulation manifests immediately in the derivatives. For the repression function $f_1$, we have
\[
f_1'(x_2) = \frac{d}{dx_2}\left(\frac{1}{1 + e^{\lambda(x_2 - \theta_2)}}\right) = -\lambda f_1(x_2) \big(1 - f_1(x_2)\big) < 0,
\]
exploiting the characteristic self-referential form of the logistic derivative. The negative sign arises because $f_1$ decreases as $x_2$ increases. Similarly, for the activation function $f_2$:
\[
f_2'(x_1) = \lambda f_2(x_1) \big(1 - f_2(x_1)\big) > 0.
\]

These expressions are remarkably simple and require no fractional exponents or complex algebraic manipulations, a direct consequence of the logistic function's exponential structure.

\subsubsection{Eigenstructure}

The characteristic polynomial of $J$ is:
\[
\chi(\mu) = (\mu + \gamma_1)(\mu + \gamma_2) - \kappa_1 \kappa_2 f_1'(x_2^*) f_2'(x_1^*).
\]

Expanding this expression, we obtain the trace and determinant:
\begin{align*}
\operatorname{tr}(J) &= -(\gamma_1 + \gamma_2) < 0, \\[4pt]
\det(J) &= \gamma_1 \gamma_2 + \kappa_1 \kappa_2 \lambda^2 f_1(x_2^*)\big(1 - f_1(x_2^*)\big) f_2(x_1^*)\big(1 - f_2(x_1^*)\big) > 0.
\end{align*}

The trace is manifestly negative due to the positive degradation rates. The determinant is positive because all terms are non-negative: the product $f(1-f)$ attains its maximum value of $1/4$ at the inflection point but remains strictly positive throughout the interior of the unit interval.

\subsubsection{Local Asymptotic Stability}

The Routh--Hurwitz criterion for a $2 \times 2$ matrix guarantees local asymptotic stability when $\operatorname{tr}(J) < 0$ and $\det(J) > 0$, conditions that are satisfied here for all biologically meaningful parameter values. The nature of the approach to equilibrium depends on the discriminant:
\[
\Delta = \big(\operatorname{tr}(J)\big)^2 - 4\det(J).
\]

When $\Delta < 0$, the eigenvalues form a complex conjugate pair with negative real part $\operatorname{Re}(\mu) = -(\gamma_1 + \gamma_2)/2 < 0$, producing \emph{damped oscillations} where the system spirals into the equilibrium in a decaying sinusoidal manner.

\subsubsection{Bifurcation Landscape}

A natural question arises: Can this system undergo a Hopf bifurcation, transitioning from a stable equilibrium to sustained periodic oscillations as the parameters vary?

For a Hopf bifurcation to occur in a two-dimensional system, we require $\operatorname{tr}(J) = 0$ (the sum of eigenvalues vanishing so they become purely imaginary) while maintaining $\det(J) > 0$ (ensuring complex conjugates rather than real roots). However, in our model the trace
\[
\operatorname{tr}(J) = -(\gamma_1 + \gamma_2)
\]
is fixed and strictly negative for any positive degradation rates $\gamma_1, \gamma_2 > 0$. No variation of the remaining parameters $\lambda$, $\kappa_i$, or $\theta_i$ can force the trace to vanish, since these parameters enter only the off-diagonal entries of $J$.

The structural impossibility of a Hopf bifurcation, together with global stability, can be formalised as follows.

\begin{theorem}[Global asymptotic stability and absence of Hopf bifurcation]
\label{thm:no_hopf}
Let $\kappa_1,\kappa_2,\gamma_1,\gamma_2,\lambda,\theta_1,\theta_2>0$ and consider the two-gene logistic oscillator~\eqref{eq:oscillator} on $\mathbb{R}^2$.  Then:
\begin{enumerate}
\item[(i)] The system possesses a unique equilibrium $\mathbf{x}^{*}\in(0,\kappa_1/\gamma_1)\times(0,\kappa_2/\gamma_2)$.
\item[(ii)] $\mathbf{x}^{*}$ is locally asymptotically stable for \emph{every} choice of positive parameters; in particular, $\operatorname{tr}(J(\mathbf{x}^{*}))=-(\gamma_1+\gamma_2)<0$ and $\det(J(\mathbf{x}^{*}))>0$.
\item[(iii)] No choice of parameters $(\kappa_i,\gamma_i,\lambda,\theta_i)\in\mathbb{R}_{>0}^{6}$ admits a Hopf bifurcation: the system does not produce sustained limit cycles.
\item[(iv)] The closed box $\mathcal{B}_{\mathrm{osc}}=[0,\kappa_1/\gamma_1]\times[0,\kappa_2/\gamma_2]$ is forward invariant, and every trajectory of~\eqref{eq:oscillator} on $\mathbb{R}^2$ enters $\mathcal{B}_{\mathrm{osc}}$ in finite time.
\item[(v)] The divergence of the right-hand side of~\eqref{eq:oscillator} is identically $-(\gamma_1+\gamma_2)<0$ on $\mathbb{R}^2$; by Bendixson's negative criterion combined with Poincar\'e--Bendixson, the equilibrium $\mathbf{x}^{*}$ is \emph{globally} asymptotically stable on $\mathbb{R}^2$.
\end{enumerate}
\end{theorem}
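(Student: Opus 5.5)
The plan is to prove the five items in the order (i), (iv), (ii), (v), (iii). Item (iii) follows most cleanly once global stability is in hand.

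For (i), I eliminate $x_1$ using the first steady-state equation, $x_1 = (\kappa_1/\gamma_1) f_1(x_2)$, and substitute into the second. This gives the scalar equation
\[
G(x_2) := \kappa_2 f_2\bigl((\kappa_1/\gamma_1) f_1(x_2)\bigr) - \gamma_2 x_2 = 0 .
\]
The composition $f_2\circ f_1$ is strictly decreasing, since $f_1$ is decreasing and $f_2$ is increasing, and $-\gamma_2 x_2$ is strictly decreasing as well. Hence $G$ is strictly decreasing. At the endpoints we have $G(0)=\kappa_2 f_2(\cdot)>0$, because logistic values lie in $(0,1)$, and $G(\kappa_2/\gamma_2)=\kappa_2(f_2(\cdot)-1)<0$. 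The intermediate value theorem then gives exactly one root $x_2^*\in(0,\kappa_2/\gamma_2)$. The corresponding $x_1^*=(\kappa_1/\gamma_1)f_1(x_2^*)$ lies in $(0,\kappa_1/\gamma_1)$ for the same reason.

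For (iv), I check the vector field on the faces of $\mathcal B_{\mathrm{osc}}$. On $x_1=0$ we have $\dot x_1=\kappa_1 f_1>0$, and on $x_1=\kappa_1/\gamma_1$ we have $\dot x_1=\kappa_1(f_1-1)<0$; the faces in $x_2$ behave the same way. For entry in finite time, each coordinate satisfies the scalar differential inequalities $-\gamma_i x_i < \dot x_i < \kappa_i-\gamma_i x_i$, where the bounds use $0<f<1$. Comparison then gives
\[
x_i(t)\in\bigl(x_i(0)e^{-\gamma_i t},\; \kappa_i/\gamma_i + (x_i(0)-\kappa_i/\gamma_i)e^{-\gamma_i t}\bigr).
\]

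The main obstacle is that this estimate only yields exponential approach to the box, not entry in finite time. To close the gap I will use the strict margin instead. On any compact set, $f_i$ is bounded away from $0$ and $1$. So near the face $x_1=0$ (say $x_1\le\epsilon$) we have $\dot x_1\ge \kappa_1 \min f_1-\gamma_1\epsilon>0$ for small $\epsilon$, which forces a crossing in finite time; the upper face is handled symmetrically. Global existence follows from the fact that the right-hand side is globally Lipschitz, because the logistic derivative is bounded by $\lambda/4$.

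For (ii), the computation preceding the theorem already gives $\operatorname{tr}J=-(\gamma_1+\gamma_2)$ and $\det J>0$. Here I use $f(1-f)>0$ and the fact that $f_1'f_2'<0$, so that subtracting $\kappa_1\kappa_2 f_1'f_2'$ adds a positive term to the determinant. Routh–Hurwitz then gives local asymptotic stability.

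For (v), the divergence is $\partial_{x_1}\dot x_1+\partial_{x_2}\dot x_2=-\gamma_1-\gamma_2$, because $\dot x_1$ does not depend on $x_1$ and $\dot x_2$ does not depend on $x_2$. Since $\mathbb R^2$ is simply connected, Bendixson's criterion excludes periodic orbits and homoclinic loops; the unique equilibrium also rules out heteroclinic cycles. By (iv), every forward orbit is eventually contained in the compact box. The Poincaré–Bendixson theorem then says each $\omega$-limit set is either an equilibrium, a periodic orbit, or a union of equilibria and connecting orbits. Only $\{\mathbf x^*\}$ survives, so every trajectory converges to $\mathbf x^*$. Combined with the local stability from (ii), this gives global asymptotic stability.

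For (iii), a Hopf bifurcation requires $\operatorname{tr}J=0$, which is impossible because the trace is the fixed negative constant from (ii). Moreover, (v) excludes limit cycles for every parameter choice, so no sustained limit cycle can arise by any other mechanism either.
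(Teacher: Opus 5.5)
Your proposal is correct and follows the paper's proof essentially step for step: a scalar reduction with the intermediate value theorem for (i), inspection of the faces for (iv), trace and determinant with Routh--Hurwitz for (ii), and the constant negative divergence with Bendixson and Poincar\'e--Bendixson for (v). The differences are cosmetic (you eliminate $x_1$ rather than $x_2$, and you take the no-limit-cycle part of (iii) from (v)), except in (iv), where you are more careful than the paper. The paper infers finite-time entry from the sign of $\dot x_i$ outside the box alone, which by itself gives only exponential approach. Your observation that the comparison bounds confine each trajectory to a compact set, on which the logistic factors stay bounded away from $0$ and $1$, is what is needed: it makes any coordinate outside its interval move toward that interval with speed bounded below, and hence enter in finite time.
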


\begin{proof}
\emph{(i)} An equilibrium $\mathbf{x}^{*}=(x_1^{*},x_2^{*})$ satisfies
$x_1=(\kappa_1/\gamma_1)f^{-}(x_2)$ and $x_2=(\kappa_2/\gamma_2)f^{+}(x_1)$.
Composing the two relations defines the map\\
$T\colon[0,\kappa_1/\gamma_1]\to[0,\kappa_1/\gamma_1]$,
$T(x_1) = (\kappa_1/\gamma_1)\,f^{-}\!\bigl((\kappa_2/\gamma_2)\,f^{+}(x_1)\bigr)$.
Since $f^{+\prime}>0$ and $f^{-\prime}<0$, the composition is strictly
decreasing, so $T(x_1)-x_1$ is strictly decreasing on
$[0,\kappa_1/\gamma_1]$.  At $x_1=0$,
$T(0) = (\kappa_1/\gamma_1)\,f^{-}\bigl((\kappa_2/\gamma_2)f^{+}(0)\bigr) > 0$
because $f^{-}>0$ everywhere; at $x_1=\kappa_1/\gamma_1$,
$T(\kappa_1/\gamma_1) < \kappa_1/\gamma_1$ because $f^{-}<1$.  The
intermediate-value theorem then gives a unique fixed point
$x_1^{*}\in(0,\kappa_1/\gamma_1)$, and the corresponding
$x_2^{*}=(\kappa_2/\gamma_2)f^{+}(x_1^{*})$ lies in
$(0,\kappa_2/\gamma_2)$ by an analogous argument.

\emph{(ii)} The Jacobian at $\mathbf{x}^{*}$ has off-diagonal entries
$J_{12}=\kappa_1 f_1'(x_2^{*})<0$ and $J_{21}=\kappa_2 f_2'(x_1^{*})>0$.
Hence $\det J = \gamma_1\gamma_2 - J_{12}J_{21}>\gamma_1\gamma_2>0$,
since $J_{12}J_{21}<0$.  Combined with
$\operatorname{tr}J=-(\gamma_1+\gamma_2)<0$, the Routh--Hurwitz criterion
gives local asymptotic stability.

\emph{(iii)} A Hopf bifurcation in $\mathbb{R}^{2}$ requires a smooth
one-parameter family of equilibria along which
$\operatorname{tr}J$ changes sign while $\det J$ remains
positive~\citep{Marsden1976,Kuznetsov2004}.  Since
$\operatorname{tr}J=-(\gamma_1+\gamma_2)$ depends only on
$(\gamma_1,\gamma_2)\in\mathbb{R}_{>0}^{2}$ and is strictly negative
throughout this open quadrant, no such family exists.

\emph{(iv)} On the boundary face $\{x_1=0\}$ of $\mathcal{B}_{\mathrm{osc}}$,
$\dot x_1 = \kappa_1 f^{-}(x_2) > 0$; on $\{x_1=\kappa_1/\gamma_1\}$,
$\dot x_1 = \kappa_1 f^{-}(x_2) - \gamma_1(\kappa_1/\gamma_1) =
\kappa_1(f^{-}(x_2)-1) \le 0$.  The same inequalities hold for $x_2$, so
$\mathcal{B}_{\mathrm{osc}}$ is forward invariant by Nagumo's
theorem~\citep{blanchini2008set}.  Outside $\mathcal{B}_{\mathrm{osc}}$,
since $f^{\pm}\in(0,1)$ we have $\dot x_i\le\kappa_i-\gamma_i x_i$ (strictly
negative when $x_i>\kappa_i/\gamma_i$) and $\dot x_i\ge-\gamma_i x_i$
(strictly positive when $x_i<0$), so any trajectory enters
$\mathcal{B}_{\mathrm{osc}}$ in finite time.

\emph{(v)} The divergence is
$\partial \dot x_1/\partial x_1 + \partial \dot x_2/\partial x_2 =
-\gamma_1 - \gamma_2 < 0$ identically on $\mathbb{R}^2$, so by Bendixson's
negative criterion no closed orbit (periodic orbit or homoclinic loop)
can lie in any simply connected region of $\mathbb{R}^2$.  By the
Poincar\'e--Bendixson theorem applied to the forward-invariant compact
region $\mathcal{B}_{\mathrm{osc}}$, the $\omega$-limit set of every
trajectory is a single equilibrium; uniqueness from~(i) and local
asymptotic stability from~(ii) force this equilibrium to be
$\mathbf{x}^{*}$.  Combined with~(iv), every trajectory of~\eqref{eq:oscillator} on $\mathbb{R}^2$
converges to $\mathbf{x}^{*}$, so $\mathbf{x}^{*}$ is globally
asymptotically stable.
\end{proof}

\begin{remark}
Theorem~\ref{thm:no_hopf} shows that the \emph{two-gene} negative-feedback motif modelled by~\eqref{eq:oscillator} cannot, by itself, generate sustained oscillations: the obstruction is the planar Bendixson criterion and is therefore specific to dimension two.  Two routes restore sustained oscillation.  Enlarging the network beyond two genes lifts the planar obstruction---the eleven-gene Traynard system of Section~\ref{ex:traynard} exhibits a delay-free limit cycle; alternatively, while retaining only two genes, sustained limit cycles emerge once delay differential equations are incorporated to account for transcription, translation, or transport lags~\citep{belgacem2025glass,farcot2019chaos,belgacem2020control,belgacem2026sustained,belgacem2026beyond}, as in real biological clocks.  We develop the delayed two-gene logistic oscillator in companion work~\citep{belgacem2026sustained,belgacem2026beyond}.
\end{remark}

\subsection{A Bistable Motif: The Genetic Toggle Switch}
\label{ex:toggle}

The two-gene oscillator of Section~\ref{ex:oscillator} pairs one activation with
one repression. Replacing the activation by a second repression yields the other
fundamental two-gene motif, mutual repression. Its canonical realisation is the
\emph{genetic toggle switch} of~\citet{gardner2000construction}, a synthetic
bistable network built in \textit{Escherichia coli} from two repressible
promoters, each gene's product repressing the other; the same mutually
inhibitory architecture underlies the natural bacteriophage~$\lambda$
lysis--lysogeny switch. The toggle is the archetype of cellular memory: it
possesses two stable expression states and retains whichever one it was last
driven into.

The classical model of~\citet{gardner2000construction} represents repression by
Hill functions. Translating the same Boolean wiring---$b_1=\lnot b_2$ and
$b_2=\lnot b_1$---through the logistic framework replaces each Hill term by a
decreasing logistic function and gives
\begin{equation}
\label{eq:toggle}
\dot{x}_1 = \kappa_1\,f^-(x_2,\theta_2,\lambda) - \gamma_1 x_1,
\qquad
\dot{x}_2 = \kappa_2\,f^-(x_1,\theta_1,\lambda) - \gamma_2 x_2 ,
\end{equation}
with $\kappa_i,\gamma_i,\theta_i,\lambda>0$. The following proposition gives a
complete and rigorous account of its dynamics.

\begin{proposition}[Dynamics of the logistic toggle switch]
\label{prop:toggle}
For system~\eqref{eq:toggle}, write $\rho_i=\kappa_i/\gamma_i$. Then:
\begin{enumerate}
\item[(i)] the box $\mathcal{B}=[0,\rho_1]\times[0,\rho_2]$ is forward invariant
  and globally attracting, and the system has no periodic orbit;
\item[(ii)] at every equilibrium the Jacobian satisfies
  $\operatorname{tr}J=-(\gamma_1+\gamma_2)<0$ and has two real eigenvalues; the
  equilibrium is a stable node when $\det J>0$ and a saddle when $\det J<0$, so
  no equilibrium is a focus and the switch cannot oscillate;
\item[(iii)] the equilibria are the fixed points of the strictly increasing map
  $T(x_1)=\rho_1 f^-\!\bigl(\rho_2 f^-(x_1,\theta_1,\lambda),\theta_2,\lambda\bigr)$,
  and at each one $\det J=\gamma_1\gamma_2\,\bigl(1-T'(x_1^{*})\bigr)$;
  consequently the equilibrium is a stable node iff $T'(x_1^{*})<1$ and a saddle
  iff $T'(x_1^{*})>1$. Hyperbolic equilibria alternate node--saddle--node along
  the graph of $T$; generically the system is either \emph{monostable} (one
  equilibrium, globally asymptotically stable) or \emph{bistable} (two stable
  nodes separated by one saddle);
\item[(iv)] in the symmetric case $\kappa_1=\kappa_2$, $\gamma_1=\gamma_2$,
  $\theta_1=\theta_2$ (write $\rho=\kappa/\gamma$) there is exactly one symmetric
  equilibrium $x_s$, and the system is bistable iff
  $\rho\lambda\,f^-(x_s)\,\bigl(1-f^-(x_s)\bigr)>1$. If moreover $\theta=\rho/2$
  then $x_s=\rho/2$ and the system undergoes a supercritical pitchfork
  bifurcation at $\rho\lambda=4$: it is monostable for $\rho\lambda<4$ and
  bistable for $\rho\lambda>4$.
\end{enumerate}
\end{proposition}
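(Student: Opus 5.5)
The plan is to follow the template of Theorem~\ref{thm:no_hopf} for parts (i)--(ii), and then reduce everything else to the scalar map $T$.

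\emph{Parts (i) and (ii).} For (i), I reuse the boundary argument of Theorem~\ref{thm:no_hopf}(iv) verbatim. Since $f^-\in(0,1)$, we have
\[
\dot x_i>0 \ \text{on}\ \{x_i=0\}, \qquad \dot x_i\le 0 \ \text{on}\ \{x_i=\rho_i\},
\]
so Nagumo's theorem gives forward invariance. The bounds $-\gamma_i x_i\le\dot x_i\le\kappa_i-\gamma_i x_i$ give finite-time entry into $\mathcal{B}$. The divergence is identically $-(\gamma_1+\gamma_2)$, so Bendixson's criterion excludes periodic orbits.

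For (ii), the Jacobian has diagonal $(-\gamma_1,-\gamma_2)$ and off-diagonal entries $J_{12}=\kappa_1 f^{-\prime}(x_2^*)<0$ and $J_{21}=\kappa_2 f^{-\prime}(x_1^*)<0$. Hence $J_{12}J_{21}>0$, and the discriminant is
\[
(\gamma_1-\gamma_2)^2+4J_{12}J_{21}>0 .
\]
The eigenvalues are therefore real and distinct. With negative trace, $\det J>0$ means both eigenvalues are negative (stable node), and $\det J<0$ means a saddle. This gives (ii) at once.

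\emph{Part (iii).} Eliminating $x_2=\rho_2 f^-(x_1)$ turns equilibria into fixed points of $T$. As a composition of two decreasing maps, $T$ is increasing. By the chain rule,
\[
T'(x_1^*)=\rho_1\rho_2\, f^{-\prime}(x_2^*)\,f^{-\prime}(x_1^*)=\frac{J_{12}J_{21}}{\gamma_1\gamma_2},
\]
so $\det J=\gamma_1\gamma_2-J_{12}J_{21}=\gamma_1\gamma_2(1-T'(x_1^*))$.

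For the alternation claim, set $g(x)=T(x)-x$ on $[0,\rho_1]$. Exactly as in Theorem~\ref{thm:no_hopf}(i), $g(0)>0$ and $g(\rho_1)<0$. At hyperbolic zeros $g'\ne0$, and consecutive simple zeros of a continuous function have alternating signs of $g'$. The first zero has $g'<0$ (a node) and the last also has $g'<0$, so the pattern is node--saddle--node with an odd count.

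"Generically monostable or bistable" is the one step where I would be careful. I would show $T$ has at most three fixed points, via the logistic identity: $T'$ is unimodal in a suitable sense, or $g$ has at most one inflection. If that fails, I would weaken the statement to the generic alternating count. The monostable case then gives global stability by Poincar\'e--Bendixson on $\mathcal{B}$, exactly as in Theorem~\ref{thm:no_hopf}(v).

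\emph{Part (iv).} A symmetric equilibrium solves $x=\rho f^-(x)$. The left side is increasing and the right side decreasing, so the solution is unique. In the symmetric case,
\[
T'(x_s)=\bigl(\rho\lambda f^-(1-f^-)\bigr)^2 .
\]
If this exceeds $1$, $x_s$ is a saddle, and by the alternation of (iii) there must be nodes on both sides, hence bistability. Conversely, if it is below $1$, $x_s$ is a node; the remaining step is to rule out asymmetric equilibria. I expect this to be the main obstacle. My plan is to use the symmetry $(x_1,x_2)\mapsto(x_2,x_1)$, which pairs asymmetric equilibria, together with the bound of at most three fixed points from (iii). Asymmetric equilibria come in pairs, so any configuration of $3$ would force $x_s$ into the middle, i.e.\ to be a saddle, contradicting it being a node.

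With $\theta=\rho/2$ we get $f^-(\rho/2)=1/2$, so $x_s=\rho/2$ and the criterion becomes $\rho\lambda/4>1$. For supercriticality, I would restrict to the antisymmetric direction and expand $T$ about $x_s$. The reflection $x\mapsto\rho-x$ conjugates $T$ to itself because $f^-$ is odd about $\theta$, which kills the even-order terms. The cubic coefficient of $T(x_s+u)-(x_s+u)$ should then be negative, computed from the logistic's Taylor series $\tfrac12-\tfrac{\lambda u}{4}+\tfrac{\lambda^3u^3}{48}$. That sign gives the supercritical pitchfork at $\rho\lambda=4$.
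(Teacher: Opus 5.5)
Your treatment of (i)--(iii) and of the symmetric criterion in (iv) is essentially the paper's proof. You use Nagumo plus Bendixson for (i), the discriminant $(\gamma_1-\gamma_2)^2+4J_{12}J_{21}>0$ for (ii), and the chain-rule identity $\det J=\gamma_1\gamma_2\bigl(1-T'(x_1^*)\bigr)$ with alternating crossings of $g=T-\mathrm{id}$ for (iii).

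Where you depart from the paper, you are more careful than it is. The paper never bounds the number of fixed points of $T$. Its claim that ``any interior zero is an up-crossing'' silently assumes there are at most three. So does its ``iff'' in (iv), which also needs that a node at $x_s$ rules out asymmetric equilibria. You correctly isolate this as the needed lemma. Your pairing argument is right: a swapped pair $(a,b),(b,a)$ satisfies $\psi(a)=b$, $\psi(b)=a$ for the decreasing map $\psi=\rho f^-$, hence $a<x_s<b$.

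The lemma holds along exactly the route you suggest, so the fallback of weakening the statement is unnecessary. Parametrise by $u=f^-(x_1,\theta_1,\lambda)$, which is strictly decreasing in $x_1$. Then $T'=\lambda^2\rho_1\rho_2\,u(1-u)\,r(1-r)$ with $r=f^-(\rho_2u,\theta_2,\lambda)$, and
\[
\frac{d}{du}\ln T'=\frac1u-\frac1{1-u}-\lambda\rho_2\,(1-2r).
\]
Both terms are strictly decreasing in $u$, the second because $r$ decreases in $u$. Hence $\ln T'$ is strictly concave in $u$, and $g'=T'-1$ has at most two zeros. So $g$ has at most three zeros, and when it has three they are all simple, since a double zero would create a third critical point. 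This yields the monostable/bistable dichotomy in (iii) and both directions of (iv). It also covers the borderline $T'(x_s)=1$, where $x_s$ is then the unique equilibrium.

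For supercriticality the two routes genuinely differ. The paper argues globally: the new branches are stable and exist only for $\rho\lambda>4$. That again rests on the unproved ``iff''. Your local expansion works and does not depend on the counting. When $\theta=\rho/2$ one has $\psi(\rho-x)=\rho-\psi(x)$, so
\[
\psi(x_s+v)=x_s+\alpha v+\beta v^3+O(v^5),\qquad \alpha=-\rho\lambda/4,\quad \beta=\rho\lambda^3/48 .
\]
Composing gives $g(x_s+u)=(\alpha^2-1)u+\alpha\beta(1+\alpha^2)u^3+O(u^5)$. The cubic coefficient is negative for all parameters, as you predicted. Locally, then, the bifurcating fixed points exist only for $\rho\lambda>4$, and they have $g'=-2(\alpha^2-1)+O\bigl((\alpha^2-1)^2\bigr)<0$, so they are stable nodes.

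One small slip, shared with the paper, concerns (i). The bounds $-\gamma_ix_i\le\dot x_i\le\kappa_i-\gamma_ix_i$ give only exponential approach to $\mathcal{B}$, not finite-time entry. Exponential approach suffices for ``globally attracting''. If you want finite-time entry, use the uniform margins $f^-(x_j(t))\le f^-(\min\{0,x_j(0)\})<1$ and $f^-(x_j(t))\ge f^-(\max\{\rho_j,x_j(0)\})>0$.
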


\begin{proof}
\emph{(i)} On the face $\{x_i=0\}$ one has $\dot{x}_i=\kappa_i f^->0$, and on
$\{x_i=\rho_i\}$ one has $\dot{x}_i=\kappa_i(f^--1)\le0$; by Nagumo's theorem
$\mathcal{B}$ is forward invariant, and since $\dot{x}_i<0$ for $x_i>\rho_i$ and
$\dot{x}_i>0$ for $x_i<0$, every trajectory enters $\mathcal{B}$. The divergence
of the vector field is $-(\gamma_1+\gamma_2)<0$ throughout $\mathbb{R}^2$, so by
Bendixson's criterion no periodic orbit exists. (Forward invariance is also the
two-gene instance of Proposition~\ref{prop:wellposedness}.)

\emph{(ii)} Since $\partial_x f^-(x,\theta,\lambda)=-\lambda f^-(1-f^-)$, the
Jacobian of~\eqref{eq:toggle} is
\[
  J=\begin{pmatrix}-\gamma_1 & -\kappa_1 g_2\\[2pt] -\kappa_2 g_1 & -\gamma_2\end{pmatrix},
  \qquad
  g_i=\lambda\,f^-(x_i,\theta_i,\lambda)\bigl(1-f^-(x_i,\theta_i,\lambda)\bigr)>0,
\]
so $\operatorname{tr}J=-(\gamma_1+\gamma_2)$ and
$\det J=\gamma_1\gamma_2-\kappa_1\kappa_2 g_1 g_2$. The discriminant of the
characteristic polynomial,
$(\operatorname{tr}J)^2-4\det J=(\gamma_1-\gamma_2)^2+4\kappa_1\kappa_2 g_1 g_2$,
is strictly positive, so both eigenvalues are real; with $\operatorname{tr}J<0$
they are both negative exactly when $\det J>0$ (stable node) and of opposite
sign when $\det J<0$ (saddle).

\emph{(iii)} An equilibrium satisfies $x_1=\rho_1 f^-(x_2,\theta_2,\lambda)$ and
$x_2=\rho_2 f^-(x_1,\theta_1,\lambda)$; eliminating $x_2$ gives $x_1=T(x_1)$. As
the composition of two strictly decreasing logistic maps with positive
rescalings, $T$ is strictly increasing and real-analytic, with $T(0)>0$ and
$T(\rho_1)<\rho_1$; hence $T(x_1)-x_1$ has an odd number of zeros, all isolated.
Differentiating, $T'(x_1^{*})=\rho_1\rho_2\,g_1 g_2$, so
$\det J=\gamma_1\gamma_2-\kappa_1\kappa_2 g_1 g_2=\gamma_1\gamma_2\bigl(1-T'(x_1^{*})\bigr)$,
and the dichotomy of~(ii) becomes $T'(x_1^{*})\lessgtr1$. Because $T$ is
increasing with $T-x_1$ positive at $0$ and negative at $\rho_1$, consecutive
zeros are crossings of alternating direction: the outermost are down-crossings
($T'<1$, stable nodes) and any interior zero is an up-crossing ($T'>1$, saddle).
A single zero is therefore a stable node; by~(i) the box $\mathcal{B}$ is forward
invariant and free of periodic orbits, so the Poincar\'e--Bendixson theorem
forces every trajectory to converge to it, giving global asymptotic stability.
Three zeros give the node--saddle--node configuration of a bistable switch.

\emph{(iv)} In the symmetric case the exchange $x_1\leftrightarrow x_2$ leaves
the system invariant; a symmetric equilibrium solves
$x_s=\rho\,f^-(x_s,\theta,\lambda)$, whose right-hand side is strictly decreasing
in $x_s$, so it is unique. There $g_1=g_2=\lambda f^-(x_s)(1-f^-(x_s))$ and
$T'(x_s)=\bigl[\rho\lambda f^-(x_s)(1-f^-(x_s))\bigr]^2$, so by~(iii) the
symmetric equilibrium is a saddle---and the switch bistable---iff
$\rho\lambda f^-(x_s)(1-f^-(x_s))>1$. If $\theta=\rho/2$, then
$f^-(\rho/2,\rho/2,\lambda)=\tfrac12$ gives $x_s=\rho/2$ and
$f^-(x_s)(1-f^-(x_s))=\tfrac14$, so the condition reduces to $\rho\lambda>4$. The
$x_1\leftrightarrow x_2$ symmetry makes the loss of stability of the symmetric
branch at $\rho\lambda=4$ a pitchfork bifurcation; the two emerging branches are
stable nodes present only for $\rho\lambda>4$, so it is supercritical.
\end{proof}

The identity $\det J=\gamma_1\gamma_2\bigl(1-T'(x_1^{*})\bigr)$ in part~(iii) is
the quantitative core of the result: it ties the \emph{dynamical} stability of
an equilibrium of the two-dimensional flow to the \emph{geometric} slope with
which the scalar map $T$ crosses the diagonal, and shows that bistability is
created exactly when that slope first exceeds unity. Part~(iv) makes the
governing parameter explicit---the product $\rho\lambda$ of the response
steepness and the production-to-degradation ratio---and identifies
$\rho\lambda=4$ as the sharp monostable/bistable threshold, a closed-form
criterion that the non-smooth Hill formulation does not furnish as cleanly.

The toggle also provides the smallest non-trivial illustration of the Boolean
fixed-point recovery theorem. The Boolean toggle $b_1=\lnot b_2$, $b_2=\lnot b_1$
has exactly two fixed points, $(1,0)$ and $(0,1)$; the states $(0,0)$ and
$(1,1)$ are not fixed. Whenever $0<\theta_i<\rho_i$,
Theorem~\ref{thm:boolean_recovery} applies and predicts that, for sufficiently
steep response, each of these two Boolean steady states is realised as an
exponentially stable equilibrium of~\eqref{eq:toggle}, with coordinates
approaching $(\rho_1,0)$ and $(0,\rho_2)$. The bistability established in
Proposition~\ref{prop:toggle} is exactly this two-state recovery, and part~(iv)
additionally locates the response steepness at which it switches on.

Figure~\ref{fig:toggle} confirms the analysis by direct integration
of~\eqref{eq:toggle} with a fourth-order Runge--Kutta scheme implemented in R,
for the symmetric parameters $\kappa=10$, $\gamma=1$, $\theta=5$ (so $\rho=10$,
$\theta=\rho/2$, and $\lambda^{\star}=4/\rho=0.4$). Panel~(a) shows the bistable
phase portrait at $\lambda=2$: the two nullclines intersect in three points, and
trajectories launched from a grid of initial conditions partition into two
basins, each converging to one of the stable nodes. The equilibria located
numerically and classified by their Jacobian spectra agree with
Proposition~\ref{prop:toggle}---the stable nodes sit at $(9.9996,\,0.0005)$ and
its mirror image, with $\det J=+1.00$ and real eigenvalues $\{-1.001,-0.999\}$,
while the central saddle at $(5,5)$ has $\det J=-24$ and real eigenvalues
$\{4,-6\}$; no eigenvalue has a nonzero imaginary part, confirming the absence
of oscillation. Each stable node lies within $5\times10^{-4}$ (in the
$\ell^{\infty}$ norm) of its respective Boolean vertex, $(\rho,0)$ and
$(0,\rho)$, well within the bound $e^{-\lambda\delta/2}=6.7\times10^{-3}$ of
Theorem~\ref{thm:boolean_recovery} (here $\delta=\rho/2=5$). Panel~(b) traces
the equilibria as $\lambda$ varies: a single stable branch for $\lambda<0.4$
splits, at $\lambda^{\star}=0.4$, into a saddle and two stable branches---the
supercritical pitchfork of part~(iv)---confirming $\rho\lambda=4$ as the exact
threshold.

\begin{figure}[htbp]
\centering
\includegraphics[width=\linewidth]{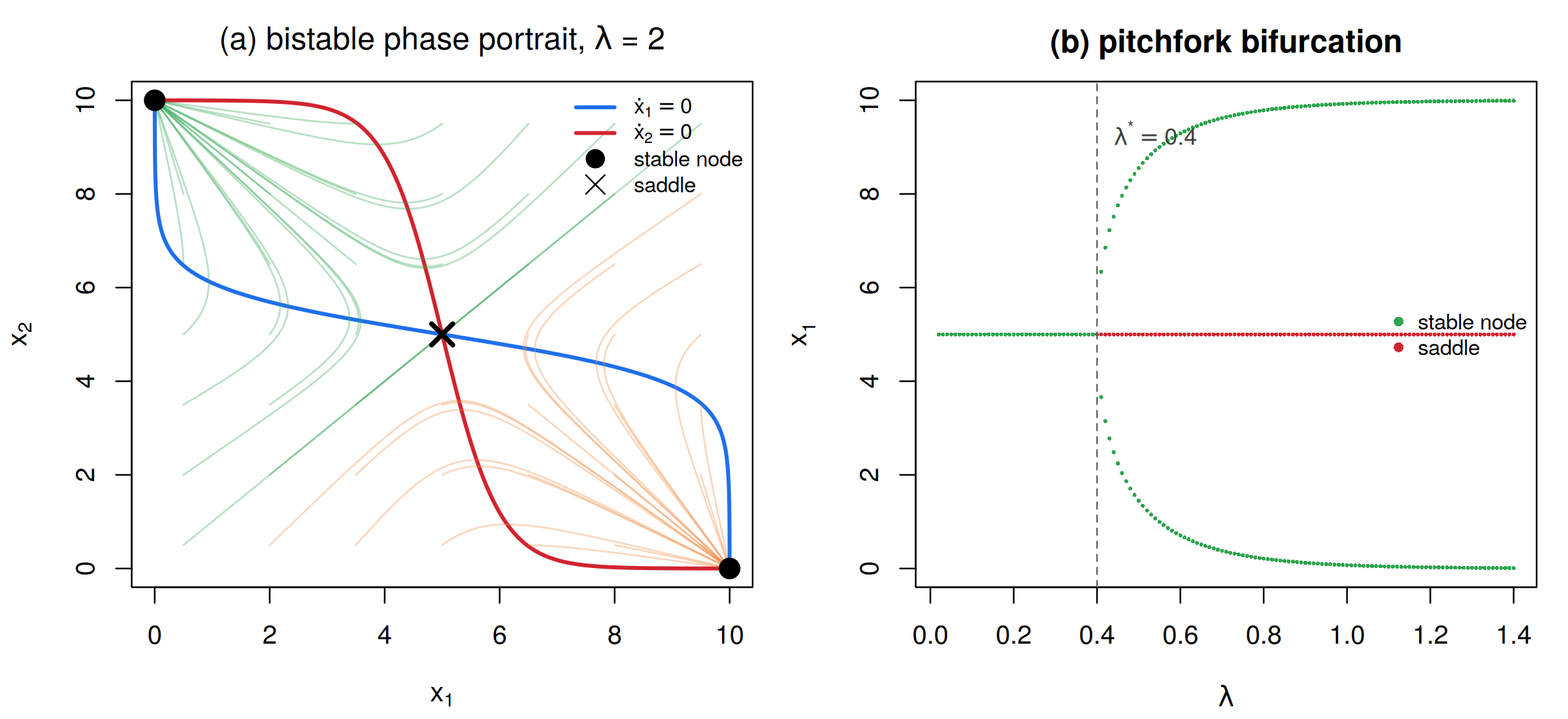}
\caption{Logistic genetic toggle switch~\eqref{eq:toggle} integrated in R, with
symmetric parameters $\kappa=10$, $\gamma=1$, $\theta=5$. \textbf{(a)}~Bistable
phase portrait at $\lambda=2$: nullclines $\dot{x}_1=0$ (blue) and
$\dot{x}_2=0$ (red), the two stable nodes (filled circles), the saddle (cross),
and trajectories from a grid of initial conditions, coloured by the basin they
reach. \textbf{(b)}~Bifurcation diagram: the equilibrium coordinate $x_1^{*}$
against the steepness $\lambda$, showing the supercritical pitchfork at
$\lambda^{\star}=4/\rho=0.4$ (stable nodes in green, saddle in red).}
\label{fig:toggle}
\end{figure}

\subsection{Reduction to the Standard Logistic Form}
\label{subsec:standard_form}

Every logistic-based regulatory function can be recast in terms of the parameter-free standard logistic $f(s) = 1/(1+e^{-s})$ through an affine change of variable. For the genetic oscillator~\eqref{eq:oscillator}, introducing the affine arguments $z_1 = \lambda(\theta_2 - x_2)$ and $z_2 = \lambda(x_1 - \theta_1)$ rewrites the dynamics in the canonical form $\dot{x}_i = \kappa_i f(z_i) - \gamma_i x_i$ for $i = 1, 2$. The same reduction extends to arbitrary networks: every sigmoidal contribution becomes a value of the same standard function $f$, with all biological information carried by the affine maps $z = \lambda(x - \theta)$ or $z = \lambda(\theta - x)$.

This canonical form exposes a useful symmetry. The standard logistic satisfies $f(-s) = 1 - f(s)$, so the decreasing sigmoid encoding repression is the complement of the increasing one encoding activation, and the two regulatory modes admit a unified probabilistic interpretation: $f(s)$ is the activation probability and $1-f(s)$ the repression probability. The reduction simplifies stability analysis, linearisation, and bifurcation calculations, and makes opposing regulatory influences manifestly consistent within a single mathematical framework.

\section{Multi-Gene Regulatory Networks}
\label{subsec:multi-gene}

The logistic-based formulation extends naturally to arbitrarily large networks,
preserving its analytical and computational advantages even in high-dimensional
settings.  A distinguishing feature of our modelling philosophy is the
\emph{explicit differentiation} between increasing and decreasing logistic
functions: we deploy each form precisely where it is biologically appropriate,
thereby maintaining clear regulatory interpretability.  This stands in contrast
to some alternative formulations that attempt to unify all regulatory
interactions through a single increasing functional
form~\citep{samuilik2022mathematical}, potentially obscuring biological meaning.

Our framework accommodates \emph{logical rules} governing how multiple
regulators collectively orchestrate gene expression, following the Boolean
network formalism pioneered by \citet{albert2003topology} for
the \textit{Drosophila} segment polarity network.  These rules capture
combinatorial effects that arise naturally in transcriptional regulation.
We denote by $\Phi\colon \mathbb{R}^N \to [0,1]$ the map that translates
a Boolean regulatory rule into its continuous logistic approximation.

\subsection{General Multi-Gene Formulation}

Consider a gene regulatory network comprising $N$ genes, where $x_i(t)$
denotes the concentration of the protein product of gene $i$ at time $t$.
Each gene's dynamics obey a balance equation between synthesis and
degradation.  Each variable $x_i$ then evolves according to
\begin{equation}
    \dot{x}_i \;=\; \kappa_i\,\Phi_i(\mathbf{x}) \;-\; \gamma_i\,x_i,
    \label{eq:ode_0}
\end{equation}
where $\Phi_i\colon \mathbb{R}^{N} \to [0,1]$
is the regulatory function that synthesises the influences from all relevant
activators and repressors in the network: it is the continuous logistic
approximation of the Boolean rule $\varphi_i$ (range and structure
established in Proposition~\ref{prop:demorgan} below), $\kappa_i > 0$ is
the maximal production rate, and $\gamma_i > 0$ is the degradation rate.
Every trajectory that starts in the box
$\mathcal{B}=\prod_i[0,\kappa_i/\gamma_i]$ remains in it (forward invariance,
Proposition~\ref{prop:wellposedness} below), so that
$x_i(t) \leq \kappa_i/\gamma_i$ for all $t \geq 0$; the box is moreover globally
attracting, so a trajectory launched outside it enters in finite time.  This
bound is uniform in the network size.  The map
$\Phi\colon \mathbb{R}^N \to [0,1]$ is defined recursively on the structure
of the rule as follows.

\begin{itemize}

  \item \textbf{Constant \textsc{False}} (gene permanently silenced):
        \[
            \Phi(\textsc{False}) \;\equiv\; 0,
        \]
        so that~\eqref{eq:ode_0} reduces to $\dot{x}_i = -\gamma_i x_i$,
        i.e.\ pure exponential decay to zero.

  \item \textbf{Constant \textsc{True}} (constitutive expression):
        \[
            \Phi(\textsc{True}) \;\equiv\; 1,
        \]
        so that~\eqref{eq:ode_0} reduces to $\dot{x}_i = \kappa_i - \gamma_i x_i$, which converges monotonically to the constitutive steady state $x_i^* = \kappa_i/\gamma_i$.

  \item \textbf{Positive literal} (activating signal): a variable $x_j$
        is mapped to the increasing logistic function,
        \[
            \Phi(x_j) \;=\; f^+(x_j).
        \]

  \item \textbf{NOT logic} (repression): a negative literal $\lnot x_j$
        is mapped to the decreasing logistic function,
        \begin{equation}
            \Phi(\lnot x_j)
            \;=\;
            f^-(x_j,\theta_j,\lambda)
            \;=\;
            1 - f^+(x_j),
            \label{eq:not_logic}
        \end{equation}
        consistent with the complement structure of Boolean negation.

  \item \textbf{AND logic} (cooperative co-regulation): a conjunction
        $C(\mathbf{x}) = x_{i_1} \wedge \cdots \wedge x_{i_k}$ is mapped
        to the \emph{product} of the corresponding logistic terms,
        \[
            \Phi\!\left(C(\mathbf{x})\right)
            \;=\;
            \Phi(x_{i_1} \wedge \cdots \wedge x_{i_k})
            \;=\;
            \prod_{l=1}^{k} f^+(x_{i_l}),
        \]
        modelling the requirement for the simultaneous satisfaction of all
        regulatory conditions.

  \item \textbf{OR logic} (independent activation): in the elementary case
        of two activating literals the map reads
        \begin{equation}
            \Phi(x_1 \vee x_2)
            \;=\;
            1 - \bigl(1 - f^+(x_1)\bigr)\bigl(1 - f^+(x_2)\bigr)
            \;=\;
            1 - f^-(x_1)\,f^-(x_2),
            \label{eq:or_elementary}
        \end{equation}
        which generalises to a disjunction of arbitrary clauses
        $C_1(\mathbf{x}) \vee C_2(\mathbf{x})$ as
        \[
            \Phi\!\left(C_1(\mathbf{x}) \vee C_2(\mathbf{x})\right)
            \;=\;
            1 - \bigl(1 - \Phi(C_1(\mathbf{x}))\bigr)
                \bigl(1 - \Phi(C_2(\mathbf{x}))\bigr),
        \]
        ensuring $\Phi \in [0,1]$ regardless of the number of independent
        regulatory pathways, in contrast to additive formulations that
        violate the unit bound when multiple activators are simultaneously
        active.  In the general case, a disjunction
        $C_1(\mathbf{x}) \vee \cdots \vee C_m(\mathbf{x})$ is mapped via
        the recursive De~Morgan product formula
        \begin{equation}
            \Phi\!\left(\bigvee_{k=1}^{m} C_k(\mathbf{x})\right)
            \;=\;
            1 - \prod_{k=1}^{m}\bigl(1 - \Phi(C_k(\mathbf{x}))\bigr),
            \label{eq:demorgan_0}
        \end{equation}
        which follows from the classical De~Morgan law
        $\lnot\!\left(\lnot C_1(\mathbf{x}) \wedge \cdots \wedge
        \lnot C_m(\mathbf{x})\right)$ applied recursively, and coincides
        with the probability that at least one of $m$ independent events
        occurs~\citep{feller1971introduction}.
        \citet{wittmann2009transforming} implicitly
        recover the two-input instance
        $\Phi(x_1 \vee x_2) = x_1 + x_2 - x_1 x_2$
        via multivariate polynomial interpolation of the Boolean OR gate,
        but do not state the general $m$-clause recursive product
        form~\eqref{eq:demorgan_0}.
        The bound-violation risk arises in naive additive translations of
        Boolean OR, where
        $\Phi_i = \sum_{k=1}^{m}\Phi(C_k(\mathbf{x}))$ can reach $m$ when
        all clauses are simultaneously active, inflating production rates to
        $m \cdot \kappa_i$ and destroying the biological bound
        $x_i^* \leq \kappa_i/\gamma_i$.

  \item \textbf{Hybrid combinations}: more complex regulatory
        architectures involving mixtures of AND, OR, and NOT gates,
        as encountered in developmental gene
        networks~\citep{albert2003topology} and synthetic biology
        circuits~\citep{nielsen2016genetic}.  For hybrid combinations,
        well-established Boolean models exist, including breast
        cancer~\citep{biane2018causal}, cell
        cycle~\citep{Traynard2016}, ERBB-regulated G1/S
        transition~\citep{Sahin2009}, S-phase entry and
        senescence~\citep{Verlingue2016}, epithelial-to-mesenchymal
        transition~\citep{Cohen2015}, and models of the pro-inflammatory
        tumour microenvironment in acute lymphoblastic
        leukaemia~\citep{Enciso2016}, among others.

\end{itemize}

The recursive map $\Phi$ defined above behaves as a unit-interval
\emph{soft} extension of the Boolean operations.  The following
proposition records its three essential structural properties: it sends
Boolean values to the interval endpoints, it is consistent with the
classical Boolean operations, and it produces strictly interior values
for any finite expression and any real-valued state.  The third property
is what guarantees strict basal expression and is therefore central to
the absence of the trapping pathology, which is analysed in detail in the companion paper~\citep{belgacem2026numerical}.

\begin{proposition}[Structural properties of the De~Morgan map $\Phi$]
\label{prop:demorgan}
Let $\Phi$ be defined recursively on Boolean formulae by the rules
$\Phi(\mathrm{True})\equiv 1$, $\Phi(\mathrm{False})\equiv 0$,
$\Phi(x_j)=f^{+}(x_j,\theta_j,\lambda)$,
$\Phi(\lnot\varphi)=1-\Phi(\varphi)$,
$\Phi(\varphi\wedge\psi)=\Phi(\varphi)\,\Phi(\psi)$, and
$\Phi(\varphi\vee\psi)=1-\bigl(1-\Phi(\varphi)\bigr)\bigl(1-\Phi(\psi)\bigr)$,
with $\lambda,\theta_j>0$.  Then for any Boolean formula $\varphi$
involving finitely many variables $x_1,\ldots,x_N$ and any
$\mathbf{x}\in\mathbb{R}^N$:
\begin{enumerate}
\item[(i)] (\emph{Range.})  $\Phi(\varphi)(\mathbf{x})\in[0,1]$, and
  $\Phi(\varphi)(\mathbf{x})\in(0,1)$ \emph{strictly} whenever $\varphi$
  is neither identically $\mathrm{True}$ nor identically $\mathrm{False}$.
\item[(ii)] (\emph{Boolean consistency, with uniform convergence.})  If each
  $x_j\in\{0,1\}$ is identified with the limiting Boolean value
  $\lim_{\lambda\to\infty}f^{+}(x_j,\theta_j,\lambda)
  =\mathbf{1}_{\{x_j>\theta_j\}}\in\{0,1\}$, then in this limit $\Phi$
  reduces to the classical Boolean evaluation map.  Moreover, on any
  compact set $K\subset\mathbb{R}^{N}$ bounded away from the threshold
  hyperplanes $\{x_j=\theta_j\}$ by a distance at least $\delta>0$, the
  convergence is uniform: there exists $C_{\varphi}>0$, depending only
  on the structure of $\varphi$, such that
  \[
    \sup_{\mathbf{x}\in K}\bigl|\Phi(\varphi)(\mathbf{x})-
    \mathbf{1}_{\{\varphi\text{ holds at }\mathbf{x}_{\mathrm{disc}}\}}\bigr|
    \;\leq\; C_{\varphi}\,e^{-\lambda\delta}\;\to\;0
    \quad\text{as }\lambda\to\infty,
  \]
  where $\mathbf{x}_{\mathrm{disc}}=(\mathbf{1}_{\{x_j>\theta_j\}})_{j=1}^{N}$.
\item[(iii)] (\emph{De~Morgan duality.})  Replacing every literal $x_j$
  by its negation $\lnot x_j$ swaps the roles of $\wedge$ and $\vee$
  under $\Phi$:
  \[
    \Phi\bigl(\lnot(\varphi\wedge\psi)\bigr)
    = \Phi(\lnot\varphi\vee\lnot\psi),
    \qquad
    \Phi\bigl(\lnot(\varphi\vee\psi)\bigr)
    = \Phi(\lnot\varphi\wedge\lnot\psi).
  \]
\end{enumerate}
In particular, the multi-clause OR formula~\eqref{eq:demorgan_0}
follows by recursive application of $(\wedge,\vee,\lnot)$ from the
elementary rules.
\end{proposition}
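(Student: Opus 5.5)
The plan is structural induction on the formula $\varphi$, using only two facts about the leaves: $f^{+}(x_j,\theta_j,\lambda)\in(0,1)$ for every real $x_j$, and the three connectives act on $[0,1]$ by the polynomial maps $a\mapsto 1-a$, $(a,b)\mapsto ab$ and $(a,b)\mapsto 1-(1-a)(1-b)$. Each of these maps $[0,1]$ (resp. $[0,1]^2$) into $[0,1]$, which gives the first half of (i) at once.

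For the strict part of (i), I will prove a stronger trichotomy by induction. For every subformula $\psi$, exactly one of the following holds:
\begin{itemize}
\item $\Phi(\psi)\equiv 0$ and $\psi$ is false at every Boolean assignment;
\item $\Phi(\psi)\equiv 1$ and $\psi$ is true at every Boolean assignment;
\item $\Phi(\psi)(\mathbf{x})\in(0,1)$ for all $\mathbf{x}\in\mathbb{R}^N$.
\end{itemize}
The base cases are the constants and the literals $f^{+}\in(0,1)$.

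Negation permutes the three cases. For $\wedge$, I check the combinations directly:
\begin{itemize}
\item if either factor is $\equiv 0$, the product is $\equiv 0$ and the conjunction is semantically false;
\item if one factor is $\equiv 1$, the product equals the other factor, and so does the conjunction semantically;
\item if both factors lie in $(0,1)$, so does the product.
\end{itemize}
The $\vee$ case follows from the $\wedge$ case applied to $1-a$, $1-b$. A formula that is neither identically True nor identically False cannot fall into the first two cases, which gives strict interiority. One caveat, which I will point out: tautologies such as $x_j\vee\lnot x_j$ land in the third case. The statement's hypothesis is an "if", so this is consistent with (i), but it shows the converse direction fails.

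For (ii), the key estimate is a leaf bound. If $|x_j-\theta_j|\ge\delta$, then
\[
\bigl|f^{+}(x_j,\theta_j,\lambda)-\mathbf{1}_{\{x_j>\theta_j\}}\bigr| = \frac{1}{1+e^{\lambda|x_j-\theta_j|}} \le e^{-\lambda\delta}.
\]
I then propagate errors through the connectives using $1$-Lipschitz estimates valid on $[0,1]$:
\[
|(1-a)-(1-a')| = |a-a'|,\qquad |ab-a'b'| \le |a-a'|+|b-b'|.
\]
The disjunction estimate is the same as the product one after complementation. Meanwhile the Boolean indicators obey exactly the same recursions, because on $\{0,1\}$ the three polynomial maps coincide with $\lnot,\wedge,\vee$. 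Induction then gives
\[
\sup_{K}\bigl|\Phi(\varphi)-\mathbf{1}_{\{\varphi(\mathbf{x}_{\mathrm{disc}})\}}\bigr| \le C_\varphi e^{-\lambda\delta},
\]
with $C_\varphi$ the number of literal occurrences in $\varphi$ (constants contribute $0$). This depends only on the structure of $\varphi$. Compactness of $K$ is not really needed; only the distance $\delta$ to the threshold hyperplanes matters. The pointwise limit statement is the special case of a single point.

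Part (iii) is a direct algebraic identity. Writing $a=\Phi(\varphi)$ and $b=\Phi(\psi)$, we have
\[
\Phi(\lnot(\varphi\wedge\psi)) = 1-ab = 1-(1-(1-a))(1-(1-b)) = \Phi(\lnot\varphi\vee\lnot\psi),
\]
and symmetrically
\[
\Phi(\lnot(\varphi\vee\psi)) = (1-a)(1-b) = \Phi(\lnot\varphi\wedge\lnot\psi).
\]
The final remark follows by induction on $m$, using associativity of the product: $1-\prod_k(1-\Phi(C_k))$ is the iterated binary $\vee$ rule.

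I expect the only delicate step to be (i). There, constants inside non-constant formulas (e.g. $x_1\vee(x_2\wedge\mathrm{False})$) force the trichotomy to carry semantic information, rather than just the naive claim that products of interior values stay interior.
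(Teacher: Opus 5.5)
Your proposal is correct and takes essentially the paper's route: structural induction from $f^{+}\in(0,1)$, the leaf bound $e^{-\lambda\delta}$ propagated through complement and product as $1$-Lipschitz maps on $[0,1]$ (so $C_\varphi$ is the literal count), and direct algebra for the De~Morgan identities. Your semantic trichotomy in (i) tightens the paper's sketch, whose closing claim that only the constants $\mathrm{True}$ and $\mathrm{False}$ reach the endpoints fails syntactically (e.g.\ $\Phi(x_1\wedge\mathrm{False})\equiv 0$); reading ``identically True/False'' semantically, as you do, is what makes (i) actually provable.
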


\begin{proof}
\emph{(i)} Each $f^{+}(x_j,\theta_j,\lambda)\in(0,1)$ for every
$x_j\in\mathbb{R}$ since the exponential is strictly positive and
finite.  The product, complement, and De~Morgan-product operations all
preserve $[0,1]$, and preserve the open subinterval $(0,1)$ whenever no
factor is identically $0$ or $1$.  The constants $\mathrm{True}$ and
$\mathrm{False}$ are the only formulae mapped to the endpoints.

\emph{(ii)} The pointwise limit
$\lim_{\lambda\to\infty}f^{+}(x_j,\theta_j,\lambda)$ is the Heaviside
indicator $\mathbf{1}_{\{x_j>\theta_j\}}$ (with value $1/2$ at
$x_j=\theta_j$, a measure-zero coincidence).  Products of indicators
are themselves indicators of intersections (corresponding to AND), and
$1-\prod(1-\mathbf{1}_{A_k})=\mathbf{1}_{\bigcup A_k}$ (corresponding to
OR), so the limit of $\Phi$ is the classical Boolean evaluation.
For the uniform statement, on $\{x_j\geq\theta_j+\delta\}$ we have
$|f^{+}(x_j,\theta_j,\lambda)-1|=1/(1+e^{\lambda(x_j-\theta_j)})\leq e^{-\lambda\delta}$,
and on $\{x_j\leq\theta_j-\delta\}$ we have
$f^{+}(x_j,\theta_j,\lambda)=1/(1+e^{\lambda(\theta_j-x_j)})\leq e^{-\lambda\delta}$.
Hence each factor is uniformly within $e^{-\lambda\delta}$ of its
Heaviside limit on $K$.  Since $\Phi(\varphi)$ is built from at most
finitely many factors via products and complements (each Lipschitz with
constant $1$ on $[0,1]$), the uniform error is bounded by
$C_{\varphi}\,e^{-\lambda\delta}$ where $C_{\varphi}$ counts the number
of literals in the formula.

\emph{(iii)} Set $p=\Phi(\varphi)$, $q=\Phi(\psi)$.  Then
\[
\Phi\bigl(\lnot(\varphi\wedge\psi)\bigr) = 1-\Phi(\varphi\wedge\psi) = 1-pq,
\]
while, by the OR rule applied to $\lnot\varphi$ and $\lnot\psi$,
\[
\Phi(\lnot\varphi\vee\lnot\psi) = 1-\bigl(1-\Phi(\lnot\varphi)\bigr)\bigl(1-\Phi(\lnot\psi)\bigr) = 1-p\,q,
\]
so the two sides agree.  The dual identity
$\Phi(\lnot(\varphi\vee\psi))=\Phi(\lnot\varphi\wedge\lnot\psi)$
follows symmetrically: both equal $(1-p)(1-q)$.
\end{proof}

Property~(i) of Proposition~\ref{prop:demorgan} is the analytical
foundation of basal expression: since $\Phi_i\in(0,1)$ \emph{strictly}
for any finite-$\lambda$ logistic system, the production term
$\kappa_i\Phi_i$ is bounded away from zero throughout the state space,
and the boundary face $\{x_i=0\}$ admits the strict inflow
$\dot{x}_i=\kappa_i\Phi_i>0$ used in the proof of forward invariance
(Proposition~\ref{prop:wellposedness}).  Property~(ii) guarantees that
the logistic continuous extension converges to the Boolean network in
the appropriate steepness limit; this convergence is the analytical basis of
the Boolean fixed-point recovery established in
Theorem~\ref{thm:boolean_recovery} below, which shows that every steady state
of the Boolean network reappears as an exponentially stable equilibrium of the
continuous system.  Property~(iii) is the precise structural reason why translating a
disjunction by the recursive De~Morgan product
formula~\eqref{eq:demorgan_0} is consistent with the negation rule:
the soft AND and soft OR are conjugate under the involution
$p\mapsto 1-p$.  We caution, however, that the soft map is \emph{not}
invariant under distributivity or idempotence: for instance,
$\Phi(\varphi\wedge(\psi\vee\chi))$ and
$\Phi((\varphi\wedge\psi)\vee(\varphi\wedge\chi))$ differ in general
even though the underlying Boolean formulas are equivalent (a direct
computation gives
$p(q+r-qr)$ versus $pq+pr-p^{2}qr$, with strict inequality whenever
$p,q,r\in(0,1)$).  This is the well-known caveat of any continuous
extension of Boolean logic to $[0,1]$~\citep{wittmann2009transforming}
and is the structural justification for the canonical DNF preprocessing
performed by \texttt{BooleanMinimize} in the large-scale numerical experiments of the companion paper~\citep{belgacem2026numerical}:
each rule is reduced to a unique minimal DNF representative before
translation, ensuring that the continuous $\Phi$ is well-defined as a
function of the Boolean rule rather than its syntactic form.

As an illustrative and biologically prevalent case, we examine
\textbf{parallel regulation}, in which multiple transcription factors
simultaneously exert independent regulatory effects, some activating and
others repressing the target gene's promoter.  In this architecture the
regulatory function takes a product structure:
\begin{equation}
    f_i(x_1, \ldots, x_N)
    \;=\;
    \prod_{j \in \mathcal{A}_i}
        \frac{1}{1 + e^{-\lambda(x_j - \theta_{ij})}}
    \;\cdot\;
    \prod_{k \in \mathcal{R}_i}
        \frac{1}{1 + e^{-\lambda(\theta_{ik} - x_k)}},
    \label{eq:parallel_regulation}
\end{equation}
where $\mathcal{A}_i$ denotes the set of activator indices and $\mathcal{R}_i$ the set of repressor indices for gene~$i$; each factor in the first product is an increasing logistic (activation term), while each factor in the second product is a decreasing logistic (repression term).

The complete dynamical system governing the network is therefore
\begin{equation}
    \dot{x}_i
    \;=\;
    \kappa_i
    \!\left(
        \prod_{j \in \mathcal{A}_i}
            \frac{1}{1 + e^{-\lambda(x_j - \theta_{ij})}}
        \;\cdot\;
        \prod_{k \in \mathcal{R}_i}
            \frac{1}{1 + e^{-\lambda(\theta_{ik} - x_k)}}
    \right)
    - \gamma_i x_i.
    \label{eq:multi_gene_system}
\end{equation}
This formulation naturally accommodates any combination of activators and
repressors acting on each gene, with the thresholds $\theta_{ij}$ and
$\theta_{ik}$ allowing gene-specific and regulator-specific tuning of
sensitivity.  Biologically, this product form models independent binding
sites, where full activation requires all activators to be bound and all
repressors to be unbound, akin to AND logic.

\subsection{Well-Posedness, Forward Invariance, and Lipschitz Bounds}

The structural advantages of the logistic formulation translate into clean
analytical guarantees for the multi-gene system~\eqref{eq:multi_gene_system}.
We collect three properties---global well-posedness, forward invariance of
the biologically meaningful box, and a global Lipschitz estimate on the
right-hand side---into the following proposition.  The proof is brief but
the conclusions underwrite every numerical and control-theoretic claim that
follows.

\begin{proposition}[Global well-posedness and forward invariance]
\label{prop:wellposedness}
Consider the multi-gene logistic system~\eqref{eq:multi_gene_system}
on $\mathbb{R}^N$, with $\kappa_i,\gamma_i,\lambda,\theta_{ij},\theta_{ik}>0$.
Let
\[
  \mathbf{F}(\mathbf{x}) =
  \bigl(\kappa_1\Phi_1(\mathbf{x})-\gamma_1 x_1,\,
        \ldots,\,
        \kappa_N\Phi_N(\mathbf{x})-\gamma_N x_N\bigr),
\]
where each $\Phi_i\colon\mathbb{R}^N\to(0,1)$ is a product of increasing
and decreasing logistic factors as in~\eqref{eq:multi_gene_system}.
Then:
\begin{enumerate}
\item[(i)] $\mathbf{F}$ is of class $C^{\infty}(\mathbb{R}^N;\mathbb{R}^N)$
  and globally Lipschitz on $\mathbb{R}^N$ with constant
  \begin{equation}
    L \;=\; \max_{1\le i\le N}\Bigl(\gamma_i + \kappa_i\,\tfrac{\lambda}{4}
    \,\bigl(|\mathcal{A}_i|+|\mathcal{R}_i|\bigr)\Bigr).
    \label{eq:lipschitz_constant}
  \end{equation}
\item[(ii)] For every initial condition $\mathbf{x}(0)\in\mathbb{R}^N$
  there exists a unique solution $\mathbf{x}\in C^{1}([0,\infty);\mathbb{R}^N)$
  of~\eqref{eq:multi_gene_system}.
\item[(iii)] The closed box
  $\mathcal{B} = \prod_{i=1}^{N}[0,\kappa_i/\gamma_i]$ is forward
  invariant: if $\mathbf{x}(0)\in\mathcal{B}$, then
  $\mathbf{x}(t)\in\mathcal{B}$ for all $t\ge 0$, and in particular all
  components remain non-negative and bounded by $\kappa_i/\gamma_i$.
\end{enumerate}
\end{proposition}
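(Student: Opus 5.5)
The plan is to prove the three parts in order, because each one feeds the next.

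\emph{Part (i).} Smoothness is immediate. Each factor $1/(1+e^{\mp\lambda(\cdot)})$ is $C^\infty$ on $\mathbb{R}$, since the denominator is smooth and never vanishes. Finite products and affine terms preserve $C^\infty$.

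For the Lipschitz constant I will bound the Jacobian and use the mean-value inequality on the convex set $\mathbb{R}^N$, measuring with the $\ell^\infty$ operator norm, i.e.\ the maximum absolute row sum. Row $i$ of $D\mathbf{F}$ has diagonal contribution $-\gamma_i$. It also has entries $\kappa_i\,\partial_{x_j}\Phi_i$.

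Differentiating the product $\Phi_i=\prod_l g_l$ with respect to one regulator gives $(\partial g_l)\prod_{m\ne l}g_m$. This uses the self-referential identity $|g_l'|=\lambda g_l(1-g_l)\le\lambda/4$ together with $0<g_m<1$. So each factor contributes at most $\lambda/4$ in absolute value.

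If a regulator $x_j$ appears in several factors, or coincides with $x_i$ itself, the bounds still add up factor by factor. Hence $\sum_j|\kappa_i\partial_{x_j}\Phi_i|\le\kappa_i\tfrac{\lambda}{4}(|\mathcal{A}_i|+|\mathcal{R}_i|)$. If $i$ is among its own regulators, the triangle inequality absorbs the $-\gamma_i$ into the same sum.

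The row sum is therefore at most $\gamma_i+\kappa_i\tfrac{\lambda}{4}(|\mathcal{A}_i|+|\mathcal{R}_i|)$. Taking the maximum over $i$ gives $\|D\mathbf{F}(\mathbf{x})\|_\infty\le L$ uniformly. Then $\|\mathbf{F}(\mathbf{x})-\mathbf{F}(\mathbf{y})\|_\infty\le L\|\mathbf{x}-\mathbf{y}\|_\infty$, by integrating $D\mathbf{F}$ along the segment between the two points.

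The main point needing care is the choice of norm. The stated constant is a row-sum bound, so I will state explicitly that the Lipschitz estimate is with respect to $\|\cdot\|_\infty$. Equivalent norms only change the constant by a dimension factor.

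\emph{Part (ii).} A globally Lipschitz $C^1$ vector field satisfies the hypotheses of the Picard--Lindel\"of theorem. This gives a unique local $C^1$ solution.

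To rule out finite-time blow-up, I will apply Gr\"onwall's inequality to $\|\mathbf{x}(t)\|\le\|\mathbf{x}(0)\|+\int_0^t(\|\mathbf{F}(0)\|+L\|\mathbf{x}(s)\|)\,ds$. This yields at most exponential growth, so the maximal solution extends to $[0,\infty)$. Alternatively, the componentwise bounds $-\gamma_i x_i<\dot x_i<\kappa_i-\gamma_i x_i$ give explicit a priori bounds directly.

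\emph{Part (iii).} I will follow the argument already used in Theorem~\ref{thm:no_hopf}(iv), with Proposition~\ref{prop:demorgan}(i) supplying $\Phi_i\in(0,1)$.

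On the face $\{x_i=0\}$ we have $\dot x_i=\kappa_i\Phi_i>0$. On the face $\{x_i=\kappa_i/\gamma_i\}$ we have $\dot x_i=\kappa_i(\Phi_i-1)<0$. So the vector field points strictly inward on every face of the box $\mathcal{B}$. Since $\mathcal{B}$ is a closed convex box and solutions are unique, Nagumo's theorem gives forward invariance.

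For a self-contained alternative, use comparison on each component. The solution satisfies $\dot x_i\ge-\gamma_i x_i$, so $x_i(t)\ge x_i(0)e^{-\gamma_i t}\ge0$. It also satisfies $\dot x_i\le\kappa_i-\gamma_i x_i$, so $x_i(t)\le\kappa_i/\gamma_i+(x_i(0)-\kappa_i/\gamma_i)e^{-\gamma_i t}\le\kappa_i/\gamma_i$.

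This comparison route avoids invoking Nagumo at all, and I expect it to be the cleanest way to present part~(iii).
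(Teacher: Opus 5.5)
Your proposal is correct and matches the paper's proof essentially step for step: a Jacobian row-sum bound in the $\infty$-operator norm using $|(f^{\pm})'|=\lambda f^{\pm}(1-f^{\pm})\le\lambda/4$ and $0<f^{\pm}<1$, then global existence from the global Lipschitz property via Picard--Lindel\"of, then Nagumo's theorem applied to the sign of $\dot x_i$ on the faces of $\mathcal{B}$. Your explicit handling of self-regulation and of regulators appearing in several factors, which the paper's phrase ``only the single factor depending on $x_j$ contributes'' glosses over, is a valid refinement, and so is your integrating-factor comparison for part~(iii); both make the argument more self-contained without changing the route.
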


\begin{proof}
\emph{(i)} Each logistic factor $f^\pm$ is in $C^{\infty}(\mathbb{R})$
with $|(f^\pm)'|\le\lambda/4$.  By the chain and product rules,
$\Phi_i$ is $C^{\infty}$, with
$|\partial\Phi_i/\partial x_j|\le\lambda/4$ for every regulator
$j\in\mathcal{A}_i\cup\mathcal{R}_i$ (and
$\partial\Phi_i/\partial x_j=0$ when $j$ does not regulate gene $i$),
because $\Phi_i\in(0,1)$ and only the single factor depending on $x_j$
contributes to the derivative.  Hence each component of $\mathbf{F}$ is
$C^{\infty}$ and the row sum of the Jacobian satisfies
\[
  \sum_{j=1}^{N}\Bigl|\frac{\partial F_i}{\partial x_j}\Bigr|
  \;\le\;
  \gamma_i \;+\; \kappa_i\,\tfrac{\lambda}{4}\,
  \bigl(|\mathcal{A}_i|+|\mathcal{R}_i|\bigr),
\]
which yields the global Lipschitz
estimate~\eqref{eq:lipschitz_constant} via the operator
$\infty$-norm bound on $\|\mathrm{D}\mathbf{F}\|$.

\emph{(ii)} Global Lipschitz continuity of $\mathbf{F}$ on $\mathbb{R}^N$
implies global existence and uniqueness by the Picard--Lindel\"of
theorem~\citep{hartman2002ordinary}.

\emph{(iii)} Forward invariance follows from inspection of the vector
field on $\partial\mathcal{B}$.  On the face $\{x_i=0\}$,
$\dot{x}_i=\kappa_i\Phi_i(\mathbf{x})>0$ strictly, since
$\Phi_i(\mathbf{x})\in(0,1)$ for every $\mathbf{x}\in\mathbb{R}^N$.
On the face $\{x_i=\kappa_i/\gamma_i\}$,
$\dot{x}_i=\kappa_i\Phi_i(\mathbf{x})-\kappa_i\le 0$, with equality only
in the unattainable limit $\Phi_i\equiv 1$.  By Nagumo's
theorem~\citep{blanchini2008set}, $\mathcal{B}$ is therefore positively
invariant.
\end{proof}

Two consequences of Proposition~\ref{prop:wellposedness} are worth
emphasising.  First, the bound~\eqref{eq:lipschitz_constant} is
\emph{linear} in $\lambda$ and in the in-degree
$|\mathcal{A}_i|+|\mathcal{R}_i|$ of each gene; for a network with a
fixed cooperativity matching $\lambda=n/\theta$ and bounded in-degree,
$L$ remains bounded uniformly in the network size $N$.  This is the
basis of the ``standard stability theory applies everywhere'' assertion
in Section~\ref{sec:hill_pathologies}: every adaptive solver enjoys
classical convergence guarantees of the form
$\|\mathbf{e}(t)\|\le\|\mathbf{e}(0)\|e^{Lt}$, with $L$ given
explicitly by~\eqref{eq:lipschitz_constant}.  The Hill formulation, as
detailed there, fails this hypothesis on the boundary $\{x_j=0\}$ and
the corresponding error bound diverges.  Second, forward invariance of
$\mathcal{B}$ holds for the \emph{exact} flow regardless of solver
behaviour; the strict positivity of $\Phi_i$ at $x_i=0$ is what
prevents permanent shutdown and mathematically underwrites the basal
production analysed in the companion paper~\citep{belgacem2026numerical}.

\subsection{Recovery of the Boolean Steady States}

Propositions~\ref{prop:demorgan} and~\ref{prop:wellposedness} guarantee that
the continuous model is well-posed and that its regulatory functions converge
to the Boolean ones. What a modeller ultimately needs, however, is the
stronger guarantee that the \emph{dynamical conclusions} of the Boolean
analysis survive the translation. The following theorem supplies it for the
steady states: every fixed point of the Boolean network reappears as a genuine,
exponentially stable equilibrium of the continuous logistic system.

\begin{theorem}[Boolean fixed-point recovery]
\label{thm:boolean_recovery}
Consider the Boolean-derived logistic system $\dot{x}_i=\kappa_i\Phi_i(\mathbf{x})-\gamma_i x_i$
of~\eqref{eq:ode_0}, in which $\Phi_i$ is the De~Morgan map of
Proposition~\ref{prop:demorgan} and every gene $j$ enters all regulatory
functions through a single threshold $\theta_j$ (as in the Traynard model of
Section~\ref{ex:traynard}), with $\kappa_j,\gamma_j>0$ and
\[
  0 \;<\; \theta_j \;<\; \kappa_j/\gamma_j ,
  \qquad j = 1,\ldots,N .
\]
Let $b\in\{0,1\}^{N}$ be a fixed point of the Boolean network, i.e.\
$b_i=\varphi_i(b)$ for all $i$, and let $\mathbf{v}(b)$ be the vertex of the box
$\mathcal{B}=\prod_i[0,\kappa_i/\gamma_i]$ with
$\mathbf{v}(b)_i=(\kappa_i/\gamma_i)\,b_i$.  Put
$\delta=\min_j\min\{\theta_j,\ \kappa_j/\gamma_j-\theta_j\}>0$.  Then there
exists $\lambda_{0}>0$ such that for every steepness $\lambda\ge\lambda_{0}$:
\begin{enumerate}
\item[(i)] \emph{(Recovery.)}  The system possesses an equilibrium
  $\mathbf{x}^{*}(\lambda)$ with
  $\|\mathbf{x}^{*}(\lambda)-\mathbf{v}(b)\|_{\infty}\le C\,e^{-\lambda\delta/2}$,
  the constant $C$ being independent of $\lambda$; in particular
  $x_i^{*}(\lambda)\to\kappa_i/\gamma_i$ when $b_i=1$ and
  $x_i^{*}(\lambda)\to 0$ when $b_i=0$ as $\lambda\to\infty$.
\item[(ii)] \emph{(Exponential stability.)}  The Jacobian at
  $\mathbf{x}^{*}(\lambda)$ is $J=-\operatorname{diag}(\gamma_i)+E(\lambda)$
  with $\|E(\lambda)\|_{\infty}\le C'\,\lambda\,e^{-\lambda\delta/2}$;
  consequently every eigenvalue of $J$ has real part at most
  $-\min_i\gamma_i+C'\lambda e^{-\lambda\delta/2}<0$, so
  $\mathbf{x}^{*}(\lambda)$ is locally exponentially stable.
\end{enumerate}
\end{theorem}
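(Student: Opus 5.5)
We will prove both parts with a fixed-point argument on a small box around the vertex $\mathbf{v}(b)$, relying on the uniform Boolean consistency of Proposition~\ref{prop:demorgan}(ii). Fix $\eta=\delta/2$ and define the closed box
\[
  K_\lambda=\prod_{i=1}^{N} I_i,\qquad
  I_i=\begin{cases}[0,\,\eta] & b_i=0,\\ [\kappa_i/\gamma_i-\eta,\ \kappa_i/\gamma_i] & b_i=1.\end{cases}
\]
Since $\delta\le\theta_j$ and $\delta\le\kappa_j/\gamma_j-\theta_j$, every point of $K_\lambda$ lies at distance at least $\delta-\eta=\delta/2$ from each threshold hyperplane $\{x_j=\theta_j\}$. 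Moreover its discretisation is exactly $\mathbf{x}_{\mathrm{disc}}=b$: we have $x_j\le\eta<\theta_j$ when $b_j=0$, and $x_j\ge\kappa_j/\gamma_j-\eta>\theta_j$ when $b_j=1$. Proposition~\ref{prop:demorgan}(ii), applied with margin $\delta/2$, therefore gives
\[
  \sup_{\mathbf{x}\in K_\lambda}|\Phi_i(\mathbf{x})-\varphi_i(b)|\le C_{\varphi_i}e^{-\lambda\delta/2}.
\]
Because $b$ is a Boolean fixed point, $\varphi_i(b)=b_i$.

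\emph{Step 1: existence and location of the equilibrium.} We look at the map $G(\mathbf{x})_i=(\kappa_i/\gamma_i)\Phi_i(\mathbf{x})$, whose fixed points are exactly the equilibria of~\eqref{eq:ode_0}. For $\mathbf{x}\in K_\lambda$ the bound above gives $|G(\mathbf{x})_i-(\kappa_i/\gamma_i)b_i|\le(\kappa_i/\gamma_i)C_{\varphi_i}e^{-\lambda\delta/2}$. Set $C=\max_i(\kappa_i/\gamma_i)C_{\varphi_i}$ and choose $\lambda_0$ so that $Ce^{-\lambda_0\delta/2}\le\eta$. For every $\lambda\ge\lambda_0$, $G$ then maps $K_\lambda$ into itself; this uses $\Phi_i\in(0,1)$ to keep $G(\mathbf{x})_i$ in $[0,\kappa_i/\gamma_i]$. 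Brouwer's theorem yields a fixed point $\mathbf{x}^*(\lambda)\in K_\lambda$, and it satisfies the sharper estimate $\|\mathbf{x}^*-\mathbf{v}(b)\|_\infty\le Ce^{-\lambda\delta/2}$. This proves (i).

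A contraction argument would instead give uniqueness inside $K_\lambda$. That route needs exactly the derivative bound of Step 2, so one could enlarge $\lambda_0$ and use Banach's theorem, but Brouwer already suffices for existence.

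\emph{Step 2: Jacobian estimate.} We write $J=-\operatorname{diag}(\gamma_i)+E$ with $E_{ij}=\kappa_i\,\partial\Phi_i/\partial x_j$. Each literal factor satisfies $\partial f^\pm/\partial x_j=\pm\lambda f^\pm(1-f^\pm)$, and $f^\pm(1-f^\pm)\le\min(f^\pm,1-f^\pm)\le e^{-\lambda\delta/2}$ at distance $\ge\delta/2$ from the threshold. The map $\Phi_i$ is a polynomial in its literal values, and each of its partial derivatives with respect to a literal is bounded by a constant depending only on the formula, since all arguments lie in $[0,1]$. The chain rule then gives
\[
  |\partial\Phi_i/\partial x_j|\le D_{\varphi_i}\,\lambda e^{-\lambda\delta/2},
\]
where $D_{\varphi_i}$ counts the occurrences of literal $x_j$ in $\varphi_i$. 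Summing over each row gives $\|E\|_\infty\le C'\lambda e^{-\lambda\delta/2}$.

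Gershgorin's theorem, applied to the rows of $J$, places every eigenvalue in a disc centred at $-\gamma_i+E_{ii}$ of radius $\sum_{j\ne i}|E_{ij}|$. Hence every eigenvalue has real part at most $-\min_i\gamma_i+C'\lambda e^{-\lambda\delta/2}$. Since $\lambda e^{-\lambda\delta/2}\to0$, we enlarge $\lambda_0$ so that this quantity is negative, and linearisation gives local exponential stability. This proves (ii).

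The main obstacle is Step 1: the box $K_\lambda$ must simultaneously be invariant under $G$ and remain a uniform distance from all thresholds, so that the constants in Proposition~\ref{prop:demorgan}(ii) do not depend on $\lambda$. Fixing the box width at $\eta=\delta/2$ rather than shrinking it with $\lambda$ resolves both requirements, and the standing assumption $0<\theta_j<\kappa_j/\gamma_j$ is exactly what makes $\delta>0$.
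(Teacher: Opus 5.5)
Your proof is correct, and its overall structure is the same as the paper's. Both use the same map $\mathbf{G}^{\lambda}(\mathbf{x})_i=(\kappa_i/\gamma_i)\Phi_i(\mathbf{x})$ and a box of half-width $\delta/2$ around $\mathbf{v}(b)$; your $K_\lambda$ is the paper's $\ell^\infty$-ball $B_r$ intersected with $\mathcal{B}$, which changes nothing because $\mathbf{G}^{\lambda}$ takes values in $\mathcal{B}$. Both get the self-map property from Proposition~\ref{prop:demorgan}(ii) with margin $\delta/2$, and both prove stability via $\lambda f^{\pm}(1-f^{\pm})\le\lambda e^{-\lambda\delta/2}$ followed by Gershgorin.

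The one real difference is the existence step:
\begin{itemize}
\item The paper first proves $\|\mathrm{D}\mathbf{G}^{\lambda}\|_\infty\le\tfrac12$ on $B_r$, then applies Banach. It reads the rate off the a~priori contraction estimate, which gives the constant $2\max_i(\kappa_i/\gamma_i)C_0$.
\item You apply Brouwer, so existence needs no derivative estimate at all. You then read the rate directly from $\mathbf{x}^{*}=\mathbf{G}^{\lambda}(\mathbf{x}^{*})$ with $\mathbf{x}^{*}\in K_\lambda$. This gives the constant $\max_i(\kappa_i/\gamma_i)C_{\varphi_i}$, which is better by a factor of two.
\end{itemize}

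What Banach buys in return is uniqueness of the equilibrium in the box around each vertex, so the lifted equilibrium is canonically attached to $b$. The theorem does not need this, since distinct Boolean fixed points already give disjoint boxes. As you note, your Step~2 bound holds on all of $K_\lambda$, not just at $\mathbf{x}^{*}$. Since the box is convex, the mean-value inequality then gives uniqueness at no extra cost.

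One small tidy-up in Step~2: state that if each occurrence of a literal is treated as a separate variable, $\Phi_i$ is multi-affine from $[0,1]^{L_i}$ to $[0,1]$. Each partial derivative in those variables then lies in $[-1,1]$. That is exactly what justifies taking $D_{\varphi_i}$ to be the occurrence count, rather than an unspecified constant depending on the formula.
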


\begin{proof}
Write $\mathbf{G}^{\lambda}(\mathbf{x})_i=(\kappa_i/\gamma_i)\,\Phi_i(\mathbf{x})$,
so that the equilibria of~\eqref{eq:ode_0} are exactly the fixed points of
$\mathbf{G}^{\lambda}$.  Fix $r=\delta/2$ and work on the closed box
$B_r=\{\mathbf{x}:\|\mathbf{x}-\mathbf{v}(b)\|_{\infty}\le r\}$.  For
$\mathbf{x}\in B_r$ and any gene $j$, the coordinate $x_j$ lies within
$\delta/2$ of $\mathbf{v}(b)_j\in\{0,\kappa_j/\gamma_j\}$, hence at distance at
least $\delta/2$ from $\theta_j$ and on the same side of $\theta_j$ as in $b$;
so the discretisation $\operatorname{disc}(\mathbf{x})_j=\mathbf{1}_{\{x_j>\theta_j\}}$
equals $b_j$ throughout $B_r$.

\emph{Step 1 ($\mathbf{G}^{\lambda}$ is a contracting self-map of $B_r$).}
By Proposition~\ref{prop:demorgan}(ii), applied on $B_r$, which is
$\delta/2$-separated from every threshold hyperplane, there is a constant
$C_0$---the maximal literal count over the rules $\varphi_1,\ldots,\varphi_N$,
independent of $\lambda$---with
$\sup_{\mathbf{x}\in B_r}|\Phi_i(\mathbf{x})-\varphi_i(b)|\le C_0e^{-\lambda\delta/2}$;
since $\varphi_i(b)=b_i$ this gives
$|\mathbf{G}^{\lambda}(\mathbf{x})_i-\mathbf{v}(b)_i|
=(\kappa_i/\gamma_i)|\Phi_i(\mathbf{x})-b_i|\le(\kappa_i/\gamma_i)C_0e^{-\lambda\delta/2}$
on $B_r$, which is $\le r$ once $\lambda$ is large, so
$\mathbf{G}^{\lambda}(B_r)\subseteq B_r$.  On $B_r$ each logistic factor
$f^{\pm}(x_j,\theta_j)$ is within $e^{-\lambda\delta/2}$ of $\{0,1\}$, so its
derivative $\lambda f^{\pm}(1-f^{\pm})$ is bounded by $\lambda e^{-\lambda\delta/2}$;
since each partial derivative $\partial\Phi_i/\partial x_j$ is a sum of at most
$L_i$ terms (the literal count of $\varphi_i$), each differentiating a single
logistic factor and leaving the remaining factors in $[0,1]$, the row sum
satisfies $\sum_j|\partial\Phi_i/\partial x_j|\le L_i\,\lambda e^{-\lambda\delta/2}$.
Hence $\|\mathrm{D}\mathbf{G}^{\lambda}\|_{\infty}\le
\max_i(\kappa_i/\gamma_i)L_i\,\lambda e^{-\lambda\delta/2}$, which is $\le\tfrac12$
once $\lambda$ is large; $\mathbf{G}^{\lambda}$ is then a contraction on $B_r$.

\emph{Step 2 (existence and rate).}  By the Banach fixed-point theorem
$\mathbf{G}^{\lambda}$ has a unique fixed point $\mathbf{x}^{*}(\lambda)\in B_r$,
an equilibrium of~\eqref{eq:ode_0}.  The contraction estimate
$\|\mathbf{x}^{*}-\mathbf{v}(b)\|_{\infty}\le(1-\tfrac12)^{-1}
\|\mathbf{G}^{\lambda}(\mathbf{v}(b))-\mathbf{v}(b)\|_{\infty}$ together with the
bound of Step~1 gives
$\|\mathbf{x}^{*}(\lambda)-\mathbf{v}(b)\|_{\infty}\le
2\max_i(\kappa_i/\gamma_i)C_0\,e^{-\lambda\delta/2}=:C\,e^{-\lambda\delta/2}$,
proving~(i).

\emph{Step 3 (stability).}  The Jacobian of~\eqref{eq:ode_0} at
$\mathbf{x}^{*}(\lambda)$ is
$J=-\operatorname{diag}(\gamma_i)+\operatorname{diag}(\kappa_i)\,\mathrm{D}\boldsymbol{\Phi}=:-\operatorname{diag}(\gamma_i)+E(\lambda)$.
Since $\mathbf{x}^{*}(\lambda)\in B_r$, the row-sum bound of Step~1 yields
$\|E(\lambda)\|_{\infty}\le\max_i\kappa_i L_i\,\lambda e^{-\lambda\delta/2}=:C'\lambda e^{-\lambda\delta/2}$.
By Gershgorin's circle theorem every eigenvalue $\mu$ of $J$ satisfies
$|\mu+\gamma_i|\le\|E(\lambda)\|_{\infty}$ for some $i$, hence
$\operatorname{Re}\mu\le-\min_i\gamma_i+C'\lambda e^{-\lambda\delta/2}$, which
is negative once $\lambda$ is large enough that
$C'\lambda e^{-\lambda\delta/2}<\min_i\gamma_i$.  Taking $\lambda_0$ to be the
largest of the three lower bounds on $\lambda$ used above completes the
proof.
\end{proof}

Theorem~\ref{thm:boolean_recovery} is the precise sense in which the logistic
translation \emph{refines} the Boolean model rather than merely approximating
it: every Boolean steady state survives as a genuine, exponentially stable
equilibrium of the continuous dynamics, the discrete labels $0$ and $1$ acquire
the quantitative meaning of the basal and saturated concentrations $0$ and
$\kappa_i/\gamma_i$, and the rate $e^{-\lambda\delta/2}$ makes explicit how
response steepness controls the fidelity of the correspondence.  The hypothesis
$0<\theta_j<\kappa_j/\gamma_j$ is the natural one---it asks only that each
gene's regulatory threshold lie between its fully repressed and fully induced
levels.  Because the argument is local to each vertex $\mathbf{v}(b)$, distinct
Boolean fixed points lift to distinct equilibria, so a multistable Boolean
network yields a continuous model with at least as many stable steady states,
each labelled by the Boolean attractor it refines.  The theorem deliberately
claims no more than it proves: it does not assert the converse---the continuous
system may carry additional equilibria near the threshold hyperplanes---and it
does not address cyclic Boolean attractors, whose continuous counterparts
depend on the update schedule and lie beyond the present scope.

\subsection{Equivalence of Fixed-Weight and Weighted Formulations}
\label{sec:weight_equivalence}
Within our modelling framework, using fixed unit weights is formally equivalent
to incorporating explicit positive real-valued weights after appropriate
parameter rescaling.  We establish this equivalence separately for the
increasing and decreasing logistic functions, then combine them into the
product formulation as a concrete illustration.

The \emph{fixed-weight} increasing logistic for activator $x$ is $f^+(x, \theta, \lambda)= \frac{1}{1 + e^{-\lambda(x - \theta)}}$. Its inflection point is at $x = \theta$ and its maximum slope is $\lambda/4$. The \emph{weighted} increasing logistic introduces a strictly positive
interaction strength $w > 0$:
\begin{equation}
f^+( w x, \theta, \lambda)
  = \frac{1}{1 + e^{-\lambda(wx - \theta)}},
  \qquad w > 0.
\label{eq:fplus_weighted}
\end{equation}
Factoring the exponent, $\lambda(wx - \theta)= \lambda w \!\left(x - \frac{\theta}{w}\right)= \lambda'(x - \theta')$, where the rescaled parameters are
\begin{equation}
\lambda' = \lambda w > 0,
\qquad
\theta' = \frac{\theta}{w} > 0.
\label{eq:rescaling_plus}
\end{equation}
Hence,
\[
  f^+(w x, \theta, \lambda)
  = \frac{1}{1 + e^{-\lambda'(x - \theta')}}
  = f^+(x, \theta', \lambda'),
\]
which is identical in form to the fixed-weight function with parameters $(\lambda', \theta')$. The effective threshold $\theta' = \theta/w > 0$ is always positive and biologically interpretable as the activator concentration producing half-maximal activation ($\mathrm{EC}_{50}$).

Similarly, the \emph{fixed-weight} decreasing logistic for repressor $x$ is $f^-(x, \theta, \lambda)
  = \frac{1}{1 + e^{-\lambda(\theta - x)}}$. Its inflection point is at $x = \theta$ and its maximum slope magnitude is
$\lambda/4$. The \emph{weighted} decreasing logistic introduces a strictly positive interaction strength $w > 0$:
\begin{equation}
f^-(w x, \theta, \lambda)
  = \frac{1}{1 + e^{-\lambda(\theta - wx)}},
  \qquad w > 0.
\label{eq:fminus_weighted}
\end{equation}
Factoring the exponent, $\lambda(\theta - wx)  = \lambda w \!\left(\frac{\theta}{w} - x\right) = \lambda'(\theta' - x)$, with the same rescaling~\eqref{eq:rescaling_plus}.  Hence
\[
  f^-(w x, \theta, \lambda)
  = \frac{1}{1 + e^{-\lambda'(\theta' - x)}}
  = f^-(x, \theta', \lambda'),
\]
identical in form to the fixed-weight function with parameters $(\lambda', \theta')$. The effective threshold $\theta' = \theta/w > 0$ is again always positive, interpretable as the repressor concentration producing half-maximal inhibition ($\mathrm{IC}_{50}$).

\subsubsection{Product Formulation: A Concrete Illustration}

We now combine both logistic types into the product regulatory function.
Consider gene $i$ regulated by one activator $x_j$ with weight $w_{ij}>0$
and one repressor $x_k$ with weight $w_{ik}>0$. Introducing explicit weights $w_{ij}, w_{ik} > 0$:
\begin{equation}
\dot{x}_i = \kappa_i
\left(
  \frac{1}{1 + e^{-\lambda_{ij}(w_{ij}\,x_j - \theta_{ij})}}
  \cdot
  \frac{1}{1 + e^{-\lambda_{ik}(\theta_{ik} - w_{ik}\,x_k)}}
\right)
- \gamma_i x_i.
\label{eq:product_weighted}
\end{equation}

Applying the rescaling~\eqref{eq:rescaling_plus} to each factor
independently,
\[
  \lambda_{ij}(w_{ij}\,x_j - \theta_{ij})
    = \lambda_{ij}'(x_j - \theta_{ij}'),
  \qquad
  \lambda_{ik}(\theta_{ik} - w_{ik}\,x_k)
    = \lambda_{ik}'(\theta_{ik}' - x_k),
\]
where
\begin{equation}
\lambda_{ij}' = \lambda_{ij} w_{ij},
\quad
\theta_{ij}' = \frac{\theta_{ij}}{w_{ij}},
\quad
\lambda_{ik}' = \lambda_{ik} w_{ik},
\quad
\theta_{ik}' = \frac{\theta_{ik}}{w_{ik}}.
\label{eq:rescaling_product}
\end{equation}
Substituting into~\eqref{eq:product_weighted} recovers exactly the
fixed-weight system with parameters: $(\lambda_{ij}', \theta_{ij}', \lambda_{ik}', \theta_{ik}')$.

The general weighted formulation for gene $i$ with activator set
$\mathcal{A}_i$ and repressor set $\mathcal{R}_i$ is therefore
{\small
\begin{equation}
\dot{x}_i = \kappa_i
\!\left(
  \prod_{j \in \mathcal{A}_i}
    \frac{1}{1 + e^{-\lambda_{ij}(w_{ij}\,x_j - \theta_{ij})}}
  \;\cdot\;
  \prod_{k \in \mathcal{R}_i}
    \frac{1}{1 + e^{-\lambda_{ik}(\theta_{ik} - w_{ik}\,x_k)}}
\right)
- \gamma_i x_i,
\label{eq:multi_gene_system_weighted}
\end{equation}
}
with $w_{ij},\, w_{ik} > 0$,
which is equivalent to the fixed-weight system via the rescaling
$\lambda_{ij}' = \lambda_{ij}w_{ij}$,
$\theta_{ij}' = \theta_{ij}/w_{ij}$ for all $j \in \mathcal{A}_i$, and
$\lambda_{ik}' = \lambda_{ik}w_{ik}$,
$\theta_{ik}' = \theta_{ik}/w_{ik}$ for all $k \in \mathcal{R}_i$. The two systems produce \emph{identical trajectories} after parameter
estimation, since both parameter sets
$\{(\lambda_{ij}, \theta_{ij})\}$ and $\{(\lambda_{ij}', \theta_{ij}')\}$
are estimated from the same experimental data.  The weighted formulation
redistributes steepness and threshold information across two parameters
rather than one, but the invariant quantities governing the sigmoid
shape---the effective steepness $\lambda' = \lambda w$ and the effective
threshold $\theta' = \theta/w$---are the same in both cases.
One may therefore choose whichever parameterisation best suits the
available data, interpretability requirements, or computational
convenience, with the guarantee that the underlying biological dynamics
are identical.

\section{Application: The Traynard Mammalian Cell-Cycle Network}
\label{ex:traynard}

To illustrate the framework on a concrete biologically grounded network, we consider the Traynard mammalian cell-cycle model~\citep{Traynard2016}, a well-established Boolean network comprising eleven genes: $\mathrm{Cdc20}$, $\mathrm{Cdh1}$, $\mathrm{CycA}$, $\mathrm{CycB}$, $\mathrm{CycD}$, $\mathrm{CycE}$, $\mathrm{E2F}$, $\mathrm{p27}$, $\mathrm{Rb}$, $\mathrm{Skp2}$, and $\mathrm{UbcH10}$.

\subsection{Boolean Network}
The regulatory rules governing each gene are the following propositional
formulae:
{\footnotesize
\begin{align*}
\varphi_{\mathrm{Cdc20}}
    &= \mathrm{CycB},\\
\varphi_{\mathrm{Cdh1}}
    &= (\neg\mathrm{CycA}\wedge\neg\mathrm{CycB})\vee\mathrm{p27},\\
\varphi_{\mathrm{CycA}}
    &= (\neg\mathrm{Cdc20}\wedge\neg\mathrm{Cdh1}\wedge\mathrm{CycA})
       \vee(\neg\mathrm{Cdc20}\wedge\neg\mathrm{Cdh1}
           \wedge\mathrm{E2F}\wedge\neg\mathrm{Rb})\\
    &\quad
       \vee(\mathrm{CycA}\wedge\neg\mathrm{UbcH10})
       \vee(\mathrm{E2F}\wedge\neg\mathrm{Rb}\wedge\neg\mathrm{UbcH10}),\\
\varphi_{\mathrm{CycB}}
    &= (\neg\mathrm{Cdc20}\wedge\neg\mathrm{Cdh1})
       \vee(\neg\mathrm{Cdh1}\wedge\neg\mathrm{UbcH10}),\\
\varphi_{\mathrm{CycE}}
    &= \mathrm{E2F}\wedge\neg\mathrm{Rb},\\
\varphi_{\mathrm{E2F}}
    &= (\neg\mathrm{Cdc20}\wedge\neg\mathrm{CycA}\wedge\neg\mathrm{Rb})
       \vee(\neg\mathrm{Cdc20}\wedge\mathrm{p27}\wedge\neg\mathrm{Rb})\\
    &\quad
       \vee(\neg\mathrm{CycA}\wedge\neg\mathrm{CycB}\wedge\neg\mathrm{Rb})
       \vee(\neg\mathrm{CycB}\wedge\mathrm{p27}\wedge\neg\mathrm{Rb}),\\
\varphi_{\mathrm{p27}}
    &= (\neg\mathrm{CycA}\wedge\neg\mathrm{CycB}
        \wedge\neg\mathrm{CycD}\wedge\neg\mathrm{CycE})
       \vee(\neg\mathrm{CycA}\wedge\neg\mathrm{CycB}
           \wedge\neg\mathrm{CycD}\wedge\mathrm{p27})\\
    &\quad
       \vee(\neg\mathrm{CycB}\wedge\neg\mathrm{CycD}
           \wedge\neg\mathrm{CycE}\wedge\mathrm{p27})
       \vee(\neg\mathrm{CycD}\wedge\neg\mathrm{Skp2}),\\
\varphi_{\mathrm{Rb}}
    &= (\neg\mathrm{CycA}\wedge\neg\mathrm{CycB}
        \wedge\neg\mathrm{CycD}\wedge\neg\mathrm{CycE})
       \vee(\neg\mathrm{CycA}\wedge\neg\mathrm{CycD}\wedge\mathrm{p27})\\
    &\quad
       \vee(\neg\mathrm{CycB}\wedge\neg\mathrm{CycD}\wedge\mathrm{p27})
       \vee(\neg\mathrm{CycD}\wedge\neg\mathrm{CycE}\wedge\mathrm{p27}),\\
\varphi_{\mathrm{Skp2}}
    &= \neg\mathrm{Cdh1}\vee\neg\mathrm{Rb},\\
\varphi_{\mathrm{UbcH10}}
    &= (\mathrm{Cdc20}\wedge\mathrm{UbcH10})
       \vee\neg\mathrm{Cdh1}
       \vee(\mathrm{CycA}\wedge\mathrm{UbcH10})
       \vee(\mathrm{CycB}\wedge\mathrm{UbcH10}),\\
\varphi_{\mathrm{CycD}}
    &= \mathrm{CycD}.
\end{align*}
}

\subsection{Continuous Logistic ODE System}
Applying the map $\Phi$ \eqref{eq:ode_0}--\eqref{eq:demorgan_0} to each
Boolean rule, each gene $x_i$ evolves according to
$\dot{x}_i = \kappa_i\,\Phi_i(\mathbf{x})-\gamma_i\,x_i$.
With the steepness matching $\lambda_i = n/\theta_i$ and a shared
cooperativity $n = 4$, the eleven continuous regulatory functions are
{\small
\begin{align}
\Phi_{\mathrm{Cdc20}}
    &= f^{+}(\mathrm{CycB},\,\theta_{\mathrm{CycB}}),
\label{eq:Cdc20}\\
\Phi_{\mathrm{Cdh1}}
    &= 1 - \bigl(1 - f^{-}(\mathrm{CycA},\theta_{\mathrm{CycA}})\,
                      f^{-}(\mathrm{CycB},\theta_{\mathrm{CycB}})\bigr)\,
           f^{-}(\mathrm{p27},\theta_{\mathrm{p27}}),
\label{eq:Cdh1}\\
\Phi_{\mathrm{CycA}}
    &= 1 - \bigl(1-f^{-}_{20}f^{-}_{h1}f^{+}_{A}\bigr)
           \bigl(1-f^{-}_{20}f^{-}_{h1}f^{+}_{E2F}f^{-}_{Rb}\bigr)
           \bigl(1-f^{+}_{A}f^{-}_{H10}\bigr)
           \bigl(1-f^{+}_{E2F}f^{-}_{Rb}f^{-}_{H10}\bigr),
\label{eq:CycA}\\
\Phi_{\mathrm{CycB}}
    &= 1 - \bigl(1-f^{-}_{20}f^{-}_{h1}\bigr)
           \bigl(1-f^{-}_{h1}f^{-}_{H10}\bigr),
\label{eq:CycB}\\
\Phi_{\mathrm{CycE}}
    &= f^{-}(\mathrm{Rb},\theta_{\mathrm{Rb}})\,
       f^{+}(\mathrm{E2F},\theta_{\mathrm{E2F}}),
\label{eq:CycE}\\
\Phi_{\mathrm{E2F}}
    &= 1 - \bigl(1-f^{-}_{20}f^{-}_{A}f^{-}_{Rb}\bigr)
           \bigl(1-f^{-}_{20}f^{+}_{27}f^{-}_{Rb}\bigr)
           \bigl(1-f^{-}_{A}f^{-}_{B}f^{-}_{Rb}\bigr)
           \bigl(1-f^{-}_{B}f^{+}_{27}f^{-}_{Rb}\bigr),
\label{eq:E2F}\\
\Phi_{\mathrm{p27}}
    &= 1 - \bigl(1-f^{-}_{A}f^{-}_{B}f^{-}_{D}f^{-}_{E}\bigr)
           \bigl(1-f^{-}_{A}f^{-}_{B}f^{-}_{D}f^{+}_{27}\bigr)
           \bigl(1-f^{-}_{B}f^{-}_{D}f^{-}_{E}f^{+}_{27}\bigr)
           \bigl(1-f^{-}_{D}f^{-}_{Sk}\bigr),
\label{eq:p27}\\
\Phi_{\mathrm{Rb}}
    &= 1 - \bigl(1-f^{-}_{A}f^{-}_{B}f^{-}_{D}f^{-}_{E}\bigr)
           \bigl(1-f^{-}_{A}f^{-}_{D}f^{+}_{27}\bigr)
           \bigl(1-f^{-}_{B}f^{-}_{D}f^{+}_{27}\bigr)
           \bigl(1-f^{-}_{D}f^{-}_{E}f^{+}_{27}\bigr),
\label{eq:Rb}\\
\Phi_{\mathrm{Skp2}}
    &= 1 - f^{+}(\mathrm{Cdh1},\theta_{\mathrm{Cdh1}})\,
           f^{+}(\mathrm{Rb},\theta_{\mathrm{Rb}}),
\label{eq:Skp2}\\
\Phi_{\mathrm{UbcH10}}
    &= 1 - f^{+}_{h1}
           \bigl(1-f^{+}_{20}f^{+}_{H10}\bigr)
           \bigl(1-f^{+}_{A}f^{+}_{H10}\bigr)
           \bigl(1-f^{+}_{B}f^{+}_{H10}\bigr),
\label{eq:UbcH10}\\
\Phi_{\mathrm{CycD}}
    &= f^{+}(\mathrm{CycD},\theta_{\mathrm{CycD}}),
\label{eq:CycD}
\end{align}
}
where we use the shorthand $f^{\pm}_{i} \equiv f^{\pm}(x_i,\theta_i)$
with subscripts:
$20$~=~$\mathrm{Cdc20}$,
$h1$~=~$\mathrm{Cdh1}$,
$A$~=~$\mathrm{CycA}$,
$B$~=~$\mathrm{CycB}$,
$D$~=~$\mathrm{CycD}$,
$E$~=~$\mathrm{CycE}$,
$27$~=~$\mathrm{p27}$,
$Rb$~=~$\mathrm{Rb}$,
$Sk$~=~$\mathrm{Skp2}$,
$H10$~=~$\mathrm{UbcH10}$,
$E2F$~=~$\mathrm{E2F}$.
Each factor $f^{-}(\mathrm{p27}) = 1 - f^{+}(\mathrm{p27})$ implements the
complement structure of Boolean negation~\eqref{eq:not_logic}, and the
four-factor De~Morgan products in
\eqref{eq:CycA}--\eqref{eq:Rb} implement the multi-clause OR
formula~\eqref{eq:demorgan_0}.

\subsection{Parameters}
Production rates $\kappa_i$, degradation rates $\gamma_i$, and thresholds
$\theta_i$ are drawn from $\mathcal{U}(50,100)$, $\mathcal{U}(0.25,2)$,
and $\mathcal{U}(10,20)$ respectively, then rounded to two decimal places.
The fixed realisation used throughout is reported in
Table~\ref{tab:traynard_params}.

\begin{table}[ht]
\centering
\caption{Kinetic parameters for the Traynard cell-cycle ODE system.
         Cooperativity $n=4$ (shared).  Initial conditions $x_i(0)$ are
         sampled from $\mathcal{U}(0,100)$ and rounded to two decimal
         places.}
\label{tab:traynard_params}
\small
\begin{tabular}{lcccc}
\toprule
Gene $i$ & $\kappa_i$ & $\gamma_i$ & $\theta_i$ & $x_i(0)$ \\
\midrule
$\mathrm{Cdc20}$   & 74.33 & 0.70 & 19.18 &  0.54 \\
$\mathrm{Cdh1}$    & 61.00 & 1.45 & 11.84 & 38.68 \\
$\mathrm{CycA}$    & 79.17 & 1.74 & 19.29 & 96.61 \\
$\mathrm{CycB}$    & 76.70 & 0.94 & 18.84 & 64.56 \\
$\mathrm{CycE}$    & 91.06 & 0.58 & 18.90 & 40.97 \\
$\mathrm{E2F}$     & 56.50 & 0.58 & 17.30 & 69.56 \\
$\mathrm{p27}$     & 68.79 & 0.68 & 14.69 &  5.55 \\
$\mathrm{Rb}$      & 53.20 & 1.24 & 11.73 & 32.23 \\
$\mathrm{Skp2}$    & 66.65 & 0.43 & 12.95 & 63.64 \\
$\mathrm{UbcH10}$  & 73.51 & 1.95 & 12.05 & 45.80 \\
$\mathrm{CycD}$    & 64.75 & 0.76 & 19.89 & 58.36 \\
\bottomrule
\end{tabular}
\end{table}

\noindent
The box $\mathcal{B}=\prod_i[0,\kappa_i/\gamma_i]$ is forward invariant and
globally attracting (Proposition~\ref{prop:wellposedness}).  Two of the sampled
initial values, $x_{\mathrm{CycA}}(0)=96.61$ and $x_{\mathrm{UbcH10}}(0)=45.80$,
exceed their respective ceilings $\kappa_i/\gamma_i=45.50$ and $37.70$; these two
components therefore decrease monotonically into $\mathcal{B}$ over the first few
time units, after which $x_i(t)\le\kappa_i/\gamma_i$ holds for every gene.

\subsection{Numerical Simulation}
All translations from the Boolean network to a continuous ODE system can be
automated: the Boolean network \texttt{FB} is translated automatically into a
continuous ODE system using our function \texttt{BooleanToODESys}.
The ODE system~\eqref{eq:Cdc20}--\eqref{eq:CycD} is integrated numerically
over $t \in [0,60]$ using \texttt{NDSolve} in Mathematica (default
adaptive step-size control, default error tolerances).  The solver completes
the integration without any warnings, and all 11 state variables remain
non-negative throughout.  As noted above, the two components launched above
their ceiling (CycA and UbcH10) relax into the forward-invariant box
$\prod_i[0,\kappa_i/\gamma_i]$ within the first few time units, after which every
variable satisfies $x_i(t)\le\kappa_i/\gamma_i$, in agreement with
Proposition~\ref{prop:wellposedness}.  The resulting trajectories are shown in
Figure~\ref{fig:traynard_sim}.

\begin{figure}[t]
\centering
\includegraphics[width=\linewidth]{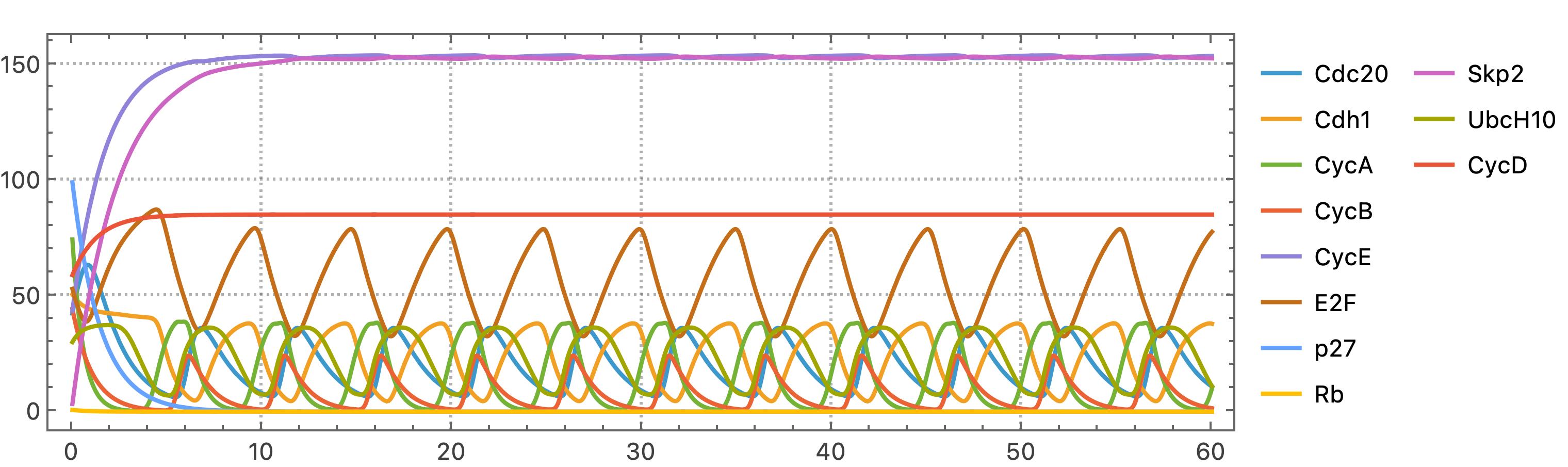}
\caption{Temporal evolution of the 11-gene Traynard cell-cycle logistic ODE
         system over $t \in [0,60]$, starting from the initial conditions
         in Table~\ref{tab:traynard_params}.  Parameters: $n = 4$;
         $\kappa_i$, $\gamma_i$, $\theta_i$ as listed in
         Table~\ref{tab:traynard_params}.  All variables remain
         non-negative and, after a brief transient, bounded by
         $\kappa_i/\gamma_i$.  Because the initial condition lies in the
         proliferative regime (CycD active), the trajectories settle onto a
         sustained limit cycle (period ${\approx}5$): the cell-cycle regulators
         Cdc20, Cdh1, CycA, CycB, E2F and UbcH10 oscillate, CycD, CycE and Skp2
         stay high, and the quiescence markers p27 and Rb stay off, reproducing
         the cyclic attractor of the Boolean model.}
\label{fig:traynard_sim}
\end{figure}

\noindent
The simulation illustrates the key advantages of the logistic formulation
in a biologically grounded large-scale network.  The always-positive basal
production rate
$f^+(0,\theta_i,n/\theta_i) = 1/(1+e^{n}) > 0$
prevents any gene from being permanently trapped in the off-state, in
contrast to Hill-function models where $h^+(0) = 0$ would make the zero
state absorbing for genes whose sole activator is initially absent.  Because the chosen initial condition has
$x_{\mathrm{CycD}}(0)>\theta_{\mathrm{CycD}}$ while the rule
$\varphi_{\mathrm{CycD}}=\mathrm{CycD}$ keeps CycD active, the network sits in
its proliferative regime, in which the Boolean model has \emph{no} fixed point
but a cyclic attractor; the continuous trajectories accordingly settle onto a
\emph{sustained limit cycle} rather than an equilibrium, with the cell-cycle
regulators Cdc20, Cdh1, CycA, CycB, E2F and UbcH10 oscillating, CycD, CycE and
Skp2 staying high, and the quiescence markers p27 and Rb staying off
(Figure~\ref{fig:traynard_sim}).  As is typical of a smooth relaxation of a
Boolean cycle, the continuous limit cycle need not reproduce every variable's
full on--off excursion---here CycE remains high because E2F never falls below
its threshold along the orbit---but the qualitative proliferative programme is
faithfully recovered.  This is the network-scale,
cyclic-attractor counterpart of the fixed-point correspondence of
Theorem~\ref{thm:boolean_recovery}: that theorem rigorously recovers the Boolean
\emph{steady states}---illustrated in closed form by the toggle switch of
Section~\ref{ex:toggle}---and deliberately leaves cyclic attractors outside its
scope, whereas the Traynard example shows empirically that the purely structural
translation also transports the Boolean \emph{cyclic} attractor to a continuous
limit cycle.  This sustained oscillation is moreover not in tension with
Theorem~\ref{thm:no_hopf}, which rules out limit cycles only for the planar
two-gene motif via Bendixson's criterion in $\mathbb{R}^2$---a two-dimensional
obstruction that does not apply in dimension $N\ge 3$, where the same delay-free
logistic framework readily supports oscillation.  The
bounded production $\Phi_i(\mathbf{x}) \in [0,1]$ enforces the asymptotic
ceiling $x_i(t) \leq \kappa_i/\gamma_i$ on the forward-invariant box, while the
$C^{\infty}$ regularity
of the right-hand side allows the solver to take large adaptive time steps
without step-size blow-up.  Together, these properties make the logistic
formulation immediately deployable for the simulation, attractor
identification, and control of Boolean-derived ODE systems at the scale of
realistic cell-cycle models.

\section{Comparison with the Samuilik Weighted-Sum Formulation}
\label{sec:comparison_literature}

A widely used alternative formulation for gene regulatory network
modelling aggregates all regulatory inputs into a single weighted sum
passed through one increasing logistic function per gene
\citep{samuilik2022mathematical}:

\begin{equation}
\dot{x}_i = \kappa_i \cdot
  \frac{1}{1 + e^{-\mu_i\!\left(\sum_{j=1}^n w_{ij} x_j - \theta_i\right)}}
  - \gamma_i x_i,
\quad w_{ij} \in \mathbb{R}.
\label{eq:samuilik_model}
\end{equation}
Here $\mu_i$ is a gene-specific steepness parameter, $\theta_i$ is a single shared threshold for gene~$i$ (rather than regulator-specific thresholds $\theta_{ij}$), and the weights $w_{ij}$ are real-valued and signed: positive for activation, negative for repression, both fed through the same increasing logistic. Subsequent studies have perpetuated this formulation~\citep{samuilik2022mathematical,sadyrbaev2021modelling,kozlovska2024search,samuilik2022genetic,sadyrbaev2023coexistence,somathilaka2023revealing,kozlovska2025modeling}.
Yet this compromise carries structural costs that are not merely
aesthetic: as we demonstrate rigorously below for AND, OR, and NOR
gates, the formulation produces repression functions whose critical
points lie at negative concentrations outside the physical domain,
thresholds that scale with network size and lose all correspondence
with measurable molecular quantities, and sigmoid shapes that remain
nearly flat and biologically inert throughout the admissible
concentration range $x \geq 0$. These are not parameter artefacts
that can be corrected by recalibration; they are unavoidable
consequences of the architectural choice to encode regulatory
direction through weight signs inside a single increasing sigmoid.

Our product-of-logistics framework resolves all three defects by
construction. The purpose of this section is to make those differences
precise, fair, and complete. We compare the two formulations on three
canonical gate types: AND (multi-input activation), OR (independent
activation), and a pure-inhibitor gate (multiple repressors, no
activators). In each case we show that our logistic approach preserves clear biological semantics, independently identifiable parameters, and network-size-invariant thresholds, while the Samuilik weighted-sum formulation does not.

\subsection{Structural Distinctions Between the Two Formulations}
\label{subsec:structural_distinctions}

The two formulations differ along four interrelated axes that we summarise here before pursuing the gate-by-gate analysis. First, with respect to the encoding of regulatory direction, a negative weight inside an increasing sigmoid (the Samuilik convention) and a positive weight inside a decreasing sigmoid (our convention) are \emph{not} equivalent representations of repression. A negative weight inside an increasing sigmoid places the sigmoid's critical point at $x_c = \theta/w < 0$ (with $\theta>0$ and $w<0$), strictly outside the physically admissible domain $x \geq 0$, so the function never undergoes its sigmoidal transition over any biologically realisable concentration and remains nearly flat and close to zero throughout. The decreasing-sigmoid convention places the critical point at $x_c = \theta/w > 0$, a positive, biologically interpretable inhibition midpoint ($\mathrm{IC}_{50}$) that can be measured directly from dose-response data. This is a structural pathology of the Samuilik encoding, not a consequence of any particular parameter choice, and it persists for every repressor in every network regardless of size.

Second, with respect to the regulatory architecture, the Samuilik model aggregates all regulatory inputs into a single weighted sum $\sum w_{ij}x_j$ passed through one increasing logistic per gene, modelling additive or competitive effects, whereas our product-of-logistics approach multiplies individual sigmoidal terms for each regulator. The latter naturally captures multiplicative (AND) interactions, in which multiple conditions must be satisfied simultaneously, as well as OR interactions via the De~Morgan product formula, while keeping each regulator's contribution analytically separable.

Third, with respect to the threshold structure, the Samuilik model uses a single threshold $\theta_i$ pooled across all regulators of gene~$i$, prescribed as $\theta_i = \sum_j w_{ij}/2$. This threshold grows linearly with the number of activators, becomes negative for pure-inhibitor gates, and depends on the expression levels of all co-regulators simultaneously, making it impossible to determine from any single-regulator experiment. Our model assigns a regulator-specific threshold $\theta_{ij}/w_{ij}$ to every interaction independently; this quantity is context-free, network-size-invariant, and directly identifiable from individual dose-response measurements.

Fourth, with respect to biological interpretability, each factor in our product represents the probability-like occupancy of an independent binding site, and the effective threshold $\theta_{ij}/w_{ij}>0$ maps directly to a dissociation constant $K_d$ or half-maximal effective concentration $\mathrm{EC}_{50}$, measurable from single-regulator dose-response experiments independently of all other regulators in the network. The Samuilik model's composite threshold $\theta_i$ loses this correspondence entirely: it carries no interpretable link to any binding affinity or half-maximal concentration, and cannot be validated against experimental data without simultaneously fitting all weights and the shared threshold in a single ill-conditioned optimisation.

\subsection{Dynamical Comparison on the Two-Gene Oscillator}
\label{subsec:oscillator_comparison}

The structural distinctions of Section~\ref{subsec:structural_distinctions}
have a direct dynamical counterpart, which the two-gene negative-feedback
oscillator of Section~\ref{ex:oscillator} exposes in the sharpest possible
form. We model the \emph{same} circuit---gene~1 repressed by gene~2, gene~2
activated by gene~1---three ways: with Hill functions, with our
product-of-logistics functions (already introduced in
Section~\ref{ex:oscillator}), and with the Samuilik weighted-sum
formulation~\eqref{eq:samuilik_model}; we then integrate each from the common
initial condition $x_1(0)=x_2(0)=1$ exactly as in
Figure~\ref{fig:Oscillateur_original}.

All three are calibrated to the same regulatory geometry. The \emph{logistic}
model (this work) is equation~\eqref{eq:oscillator}; written out, with the
parameters of Figure~\ref{fig:Oscillateur_original} ($\lambda=3$, $\kappa_1=3$,
$\gamma_1=0.25$, $\kappa_2=4$, $\gamma_2=0.5$, $\theta_1=4$, $\theta_2=3$), it
reads
\begin{equation}
\begin{aligned}
\dot{x}_1 &= \kappa_1\, f^-(x_2,\theta_2,\lambda) - \gamma_1 x_1
          = \kappa_1\,\frac{1}{1+e^{\lambda(x_2-\theta_2)}} - \gamma_1 x_1, \\[3pt]
\dot{x}_2 &= \kappa_2\, f^+(x_1,\theta_1,\lambda) - \gamma_2 x_2
          = \kappa_2\,\frac{1}{1+e^{-\lambda(x_1-\theta_1)}} - \gamma_2 x_2 .
\end{aligned}
\label{eq:osc_logistic}
\end{equation}
The \emph{Hill} model replaces each logistic by the Hill function matched at its
threshold through the slope rule $\lambda=n/\theta$, i.e.\ $n=\lambda\theta$:
\begin{equation}
\dot{x}_1 = \kappa_1\,\frac{\theta_2^{\,n_2}}{x_2^{\,n_2}+\theta_2^{\,n_2}}
            -\gamma_1 x_1,
\qquad
\dot{x}_2 = \kappa_2\,\frac{x_1^{\,n_1}}{x_1^{\,n_1}+\theta_1^{\,n_1}}
            -\gamma_2 x_2,
\label{eq:osc_hill}
\end{equation}
with $n_1=\lambda\theta_1=12$ for the activation and $n_2=\lambda\theta_2=9$
for the repression.

The \emph{Samuilik} model routes both regulatory inputs through a single
increasing logistic per gene acting on a signed weighted sum, with canonical
weights $w_{12}=-1$ (gene~2 represses gene~1) and $w_{21}=+1$ (gene~1 activates
gene~2) and matched steepness $\mu=\lambda=3$:
\begin{equation}
\dot{x}_1 = \kappa_1\,\frac{1}{1+e^{-\mu(w_{12}x_2-\theta_{12})}}-\gamma_1 x_1,
\qquad
\dot{x}_2 = \kappa_2\,\frac{1}{1+e^{-\mu(w_{21}x_1-\theta_{21})}}-\gamma_2 x_2 ,
\label{eq:osc_samuilik}
\end{equation}
where $\theta_{12}$ and $\theta_{21}$ denote the thresholds of the repression
and the activation respectively. Crucially, the weighted-sum formulation does
not fix these thresholds uniquely, and the two natural conventions yield
markedly different dynamics.

\emph{(i)~Measured thresholds.} Assigning each interaction the dissociation
constant at which the corresponding logistic and Hill sigmoid actually
switches---$\theta_{12}=3$ for the repression of gene~1 by gene~2 and
$\theta_{21}=4$ for the activation of gene~2 by gene~1, i.e.\ exactly the
$\theta_2$ and $\theta_1$ used in~\eqref{eq:osc_logistic}--\eqref{eq:osc_hill}
---equation~\eqref{eq:osc_samuilik} becomes
\begin{equation}
\dot{x}_1 = \kappa_1\,\frac{1}{1+e^{-\mu(-x_2-3)}}-\gamma_1 x_1,
\qquad
\dot{x}_2 = \kappa_2\,\frac{1}{1+e^{-\mu(x_1-4)}}-\gamma_2 x_2 .
\label{eq:osc_samuilik_bio}
\end{equation}

\emph{(ii)~Prescribed thresholds.} The standard shared-threshold prescription
$\theta_i=\tfrac12\sum_j
w_{ij}$~\citep{samuilik2022mathematical,kozlovska2022models} instead gives
$\theta_{12}=w_{12}/2=-\tfrac12$ and $\theta_{21}=w_{21}/2=+\tfrac12$.

Figure~\ref{fig:osc_comparison} reports formulation~(ii) alongside the Hill and
logistic models; the measured-threshold variant~(i) is analysed in
Remark~\ref{rem:samuilik_collapse} and Figure~\ref{fig:osc_comparison}(d).

\begin{figure}[t]
\centering
\includegraphics[width=\linewidth]{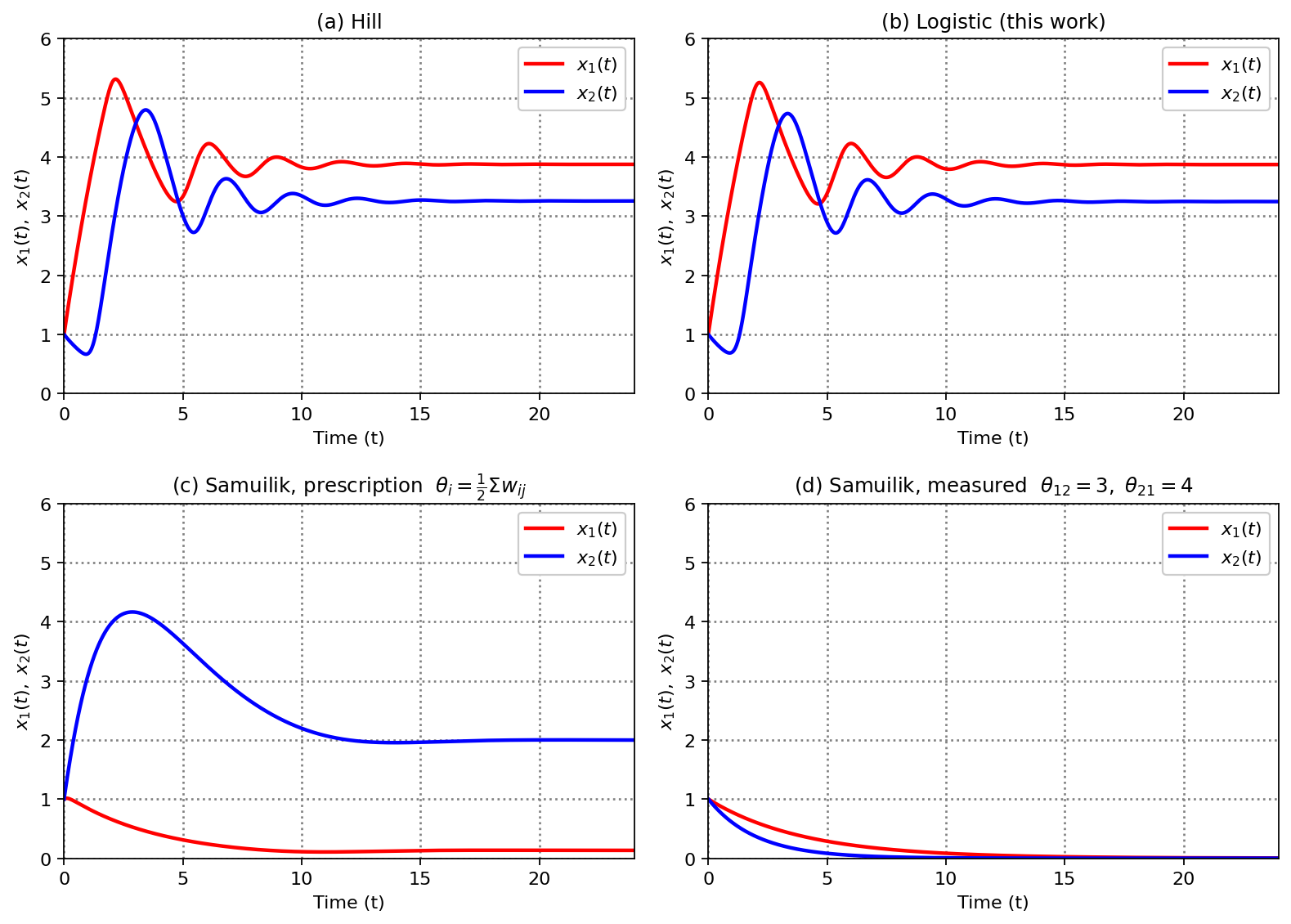}
\caption{The two-gene oscillator of Section~\ref{ex:oscillator}
($x_1(0)=x_2(0)=1$, integration as in Figure~\ref{fig:Oscillateur_original})
under four formulations of the \emph{same} circuit, all drawn on a common scale.
\textbf{(a)}~Hill functions~\eqref{eq:osc_hill}, slope-matched via
$n=\lambda\theta$ ($n_1=12$, $n_2=9$).
\textbf{(b)}~Product-of-logistics (this work),
equation~\eqref{eq:osc_logistic}, $\lambda=3$.
\textbf{(c)}~Samuilik weighted sum~\eqref{eq:osc_samuilik} under the prescribed
shared thresholds $\theta_i=\tfrac12\sum_j w_{ij}$ (i.e.\
$\theta_{12}=-\tfrac12$, $\theta_{21}=+\tfrac12$), $\mu=\lambda$.
\textbf{(d)}~Samuilik weighted sum~\eqref{eq:osc_samuilik_bio} under the
biologically measured thresholds $\theta_{12}=3$, $\theta_{21}=4$, $\mu=\lambda$.
Panels~(a) and~(b) are visually indistinguishable, converging to the same
equilibrium $(3.87,3.25)$ with the same damped overshoot. Neither Samuilik
parameterisation reproduces this: panel~(c) reaches a grossly displaced steady
state $(0.13,2.00)$ with gene~1 held essentially silent, while panel~(d), using
the biologically faithful thresholds, collapses to the trivial state $(0,0)$ as
the negative-weight repression saturates (Remark~\ref{rem:samuilik_collapse}).}
\label{fig:osc_comparison}
\end{figure}

\begin{table}[t]
\centering
\caption{Equilibria and Jacobian spectra of the two-gene oscillator under the
three formulations (and, in the last row, the Samuilik model under the measured
thresholds $\theta_{12}=3$, $\theta_{21}=4$ of~\eqref{eq:osc_samuilik_bio}
rather than the prescription; see Remark~\ref{rem:samuilik_collapse}). The correct equilibrium is
$(x_1^\ast,x_2^\ast)\approx(3.87,3.25)$. All non-degenerate cases share
$\operatorname{tr}J=-(\gamma_1+\gamma_2)=-0.75$, so the real part is fixed at
$-0.375$ and the formulations differ in the oscillation frequency (imaginary
part) and in the location of the steady state.}
\label{tab:osc_comparison}
\footnotesize
\setlength{\tabcolsep}{5pt}
\resizebox{\linewidth}{!}{%
\begin{tabular}{lccc}
\toprule
Formulation & Equilibrium $(x_1^\ast,x_2^\ast)$ & Eigenvalues of $J$ & Behaviour \\
\midrule
Hill ($n_1{=}12$, $n_2{=}9$)        & $(3.88,\,3.26)$ & $-0.375\pm2.32\,i$  & damped oscillation \\
Logistic (this work)                & $(3.87,\,3.25)$ & $-0.375\pm2.38\,i$  & damped oscillation \\
Samuilik ($\theta_i=\tfrac12\sum w$)& $(0.13,\,2.00)$ & $-0.375\pm0.454\,i$ & weakly damped; gene~1 silent \\
Samuilik (measured $\theta$)        & $(0,\,0)$       & $-0.25,\,-0.50$     & monotone collapse \\
\bottomrule
\end{tabular}}
\end{table}

Figure~\ref{fig:osc_comparison} and Table~\ref{tab:osc_comparison} make three
points. First, the Hill and logistic trajectories are visually
indistinguishable: both spiral into the same equilibrium---$(3.88,3.26)$ for
Hill, $(3.87,3.25)$ for the logistic---with the same damped overshoot and nearly
identical spectra ($-0.375\pm2.32\,i$ versus $-0.375\pm2.38\,i$). At the level of
a complete trajectory this confirms that, once slopes are matched by
$\lambda=n/\theta$, the logistic form reproduces Hill dynamics while dispensing
with fractional exponents.

Second, \emph{none} of the three formulations produces a sustained oscillation:
each is a planar negative-feedback loop with
$\operatorname{tr}J=-(\gamma_1+\gamma_2)<0$, so by Theorem~\ref{thm:no_hopf}
every equilibrium is a globally attracting focus or node and no limit cycle can
exist. The distinguishing feature between the formulations is therefore not the
presence or absence of oscillation---the planar topology already forbids
it---but \emph{where the steady state sits} and \emph{how vigorously the loop
rings} en route. The Samuilik shortfall is consequently a quantitative and
structural one, not a failure to oscillate.

Third, on both surviving criteria the Samuilik formulation misrepresents the
circuit. Its prescribed shared thresholds place the activation and repression
sigmoid midpoints at the concentration $x_c=\theta_i/w_{ij}=\tfrac12$
(Section~\ref{subsec:structural_distinctions}), far below the biological
thresholds $\theta_1=4$ and $\theta_2=3$ at which the logistic and Hill sigmoids
actually switch. The steady state is consequently displaced to $(0.13,2.00)$
---gene~1 is held essentially silent, a thirty-fold error against the correct
$x_1^\ast\approx3.87$---and the damped overshoot all but vanishes: the
oscillation frequency drops from $2.38$ to $0.454$
(Table~\ref{tab:osc_comparison}), because both sigmoids operate deep in their
saturated tails where the feedback gain $\mu f(1-f)$ is negligible. The
weighted-sum encoding does not merely re-scale parameters; it relocates the
regulatory transition out of the biologically relevant window and thereby
distorts both the equilibrium and the transient. This is, moreover, the more
forgiving of the two threshold conventions: the biologically faithful
choice~(i) of equation~\eqref{eq:osc_samuilik_bio} fares worse still,
extinguishing the circuit entirely, as we now show.

\begin{remark}[Parameter ambiguity: neither threshold convention reproduces the oscillator]
\label{rem:samuilik_collapse}
Formulation~(i) exposes the deeper difficulty. Equipping the repression of
gene~1 with its biologically measured threshold $\theta_{12}=3$ and the negative
weight $w_{12}=-1$ places the repression midpoint at
$x_c=\theta_{12}/w_{12}=-3<0$, outside the physical domain---the static midpoint
pathology of Section~\ref{subsec:structural_distinctions} (cf.\
Figure~\ref{fig:comparison}). The repression sigmoid
in~\eqref{eq:osc_samuilik_bio} then never switches for $x_2\ge0$: gene~1's
production is pinned at
$\Phi_1\le 1/(1+e^{\mu\theta_{12}})=1/(1+e^{9})\approx1.2\times10^{-4}$, so the
repressor can never release, gene~1 cannot drive its activation target, and the
whole loop collapses to the trivial state $(x_1,x_2)\to(0,0)$
(Figure~\ref{fig:osc_comparison}(d)). The two threshold conventions thus bracket
the same negative conclusion: the prescription~(ii) $\theta_i=\tfrac12\sum_j
w_{ij}$ leaves gene~1 nearly silent at a displaced equilibrium $(0.13,2.00)$
(Figure~\ref{fig:osc_comparison}c), while the biologically faithful choice~(i)
extinguishes the circuit altogether (Figure~\ref{fig:osc_comparison}(d)). In
both, the negative-weight repression never operates within the admissible
concentration range $x_2\ge0$---the precise sense in which, under the
weighted-sum encoding, the inhibition is never exercised. By contrast the
product-of-logistics repression $f^-(x_2,\theta_2,\lambda)$ places its midpoint
at the positive, measurable $\theta_2=3$ and reproduces the circuit faithfully
(Figure~\ref{fig:osc_comparison}b).
\end{remark}

\subsection{The AND Gate: Single-Repressor and Mixed Regulation}
\label{subsec:and_comparison}

A logically coherent evaluation requires comparing the two implementations
of the \emph{same} logical operation with weights included on both sides.
To place the comparison on an equal structural footing, we present our
framework in its \emph{weighted} form, which uses real-valued interaction
strengths $w_{ij}, w_{ik} > 0$ for each regulator. The weighted
product-of-logistics model for gene~$i$ with activator index set
$\mathcal{A}_i$ and repressor index set $\mathcal{R}_i$ is
{\small
\begin{equation}
\dot{x}_i = \kappa_i
\!\left(
  \prod_{j\in\mathcal{A}_i}
    \frac{1}{1+e^{-\lambda\bigl(w_{ij}x_j-\theta_{ij}\bigr)}}
  \;\cdot\;
  \prod_{k\in\mathcal{R}_i}
    \frac{1}{1+e^{-\lambda\bigl(\theta_{ik}-w_{ik}x_k\bigr)}}
\right)
- \gamma_i x_i.
\label{eq:multi_gene_weighted_uniform}
\end{equation}
}
with $w_{ij},\,w_{ik} > 0$.
As established in Section~\ref{sec:weight_equivalence}, this weighted
formulation is parameter-rescaling equivalent to the fixed-weight
model~\eqref{eq:multi_gene_system} via $\lambda' = \lambda w$ and
$\theta' = \theta/w$; the two systems produce identical trajectories after
parameter estimation, differing only in how steepness and threshold
information is distributed across parameters.

This model differs fundamentally from the alternative formulation of
\citet{samuilik2022mathematical}. The structural
difference between the two weighted formulations is transparent. Both
deploy real-valued weights; the distinction lies in \emph{how regulatory
direction is encoded}. In the Samuilik
model~\eqref{eq:samuilik_model}, direction is encoded by the \emph{sign}
of $w_{ij}$ inside a single increasing sigmoid, so repression requires
$w_{ij}<0$. In our model~\eqref{eq:multi_gene_weighted_uniform}, all weights
are \emph{strictly positive} ($w_{ij},w_{ik}>0$), and direction is encoded
by the \emph{functional form}: an increasing logistic for each activator, a
decreasing logistic for each repressor. This seemingly small architectural
choice has far-reaching consequences for biological realism, as we now
demonstrate. For completeness, note that~\eqref{eq:multi_gene_weighted_uniform} can equivalently be written with negative weights for repressors,
matching the sign convention of the Samuilik formulation:
\begin{equation}
\begin{aligned}
\dot{x}_i = \kappa_i
\!\left(
  \prod_{j\in\mathcal{A}_i}
    \frac{1}{1+e^{-\lambda\bigl(w_{ij}x_j-\theta_{ij}\bigr)}}
  \;\cdot\;
  \prod_{k\in\mathcal{R}_i}
    \frac{1}{1+e^{-\lambda\bigl(w_{ik}x_k + \theta_{ik}\bigr)}}
\right)
- \gamma_i x_i,\\[4pt]
w_{ij}>0,\quad w_{ik} < 0.
\end{aligned}
\label{eq:multi_gene_system_weighted_2}
\end{equation}

\subsubsection{Illustrating Repression: The Midpoint Pathology}

Consider a single repressor $x \geq 0$ acting on gene $i$. In our framework, with strictly positive weight $w > 0$, the repression term is the weighted decreasing logistic
\begin{equation}
f^-(wx,\theta,\lambda) \;=\; \frac{1}{1+e^{-\lambda(\theta - wx)}}.
\label{eq:our_repression_weighted}
\end{equation}
Its midpoint (where the output equals $1/2$ and the slope is steepest) satisfies
\[
  \theta - w x_c = 0
  \;\Longrightarrow\;
  x_c \;=\; \frac{\theta}{w} \;>\; 0,
\]
which is always \emph{positive} (since $\theta > 0$ and $w > 0$) and is biologically interpretable as the repressor concentration producing half-maximal inhibition (an effective $\mathrm{IC}_{50}$).

By contrast, the Samuilik formulation encodes repression by feeding a \emph{negative} weight $w<0$ into an increasing sigmoid:
\begin{equation}
f_2^-(x,\theta,\mu) \;=\; \frac{1}{1+e^{-\mu(wx-\theta)}}, \qquad w < 0.
\label{eq:samuilik_repression}
\end{equation}
Its midpoint satisfies
\[
  w x_c - \theta = 0
  \;\Longrightarrow\;
  x_c \;=\; \frac{\theta}{w} \;<\; 0
  \quad(\text{since } \theta > 0,\; w < 0),
\]
which is \emph{negative} and lies entirely outside the biologically relevant domain $x \geq 0$. Consequently, $f_2^-$ never undergoes its sigmoidal transition over any biologically realisable concentration; it remains nearly flat and close to zero throughout $x \geq 0$.

Concretely, with $w=-1$, $\mu = \lambda = n/\theta = 4/3$, $\theta = 3$:
\[
  f_2^-(x) = \frac{1}{1+\exp\!\left(\tfrac{4}{3}(x+3)\right)}.
\]
For $x \geq 0$ this yields
$f_2^-(x) \leq \tfrac{1}{1+e^{4}} \approx 0.018$,
so the function remains nearly flat (never approaching~$1$) over the entire biologically relevant domain. This is not a parameter artefact; it is a structural consequence of placing the critical point at $x_c = -3$. By contrast, our $f^-$ correctly places the sigmoid midpoint at the positive concentration $\theta/w$ and closely approximates Hill repression behaviour, as illustrated in Fig.~\ref{fig:comparison}.

\begin{figure}[t]
\centering
\includegraphics[height=0.45\linewidth, width=0.9\linewidth, keepaspectratio]
{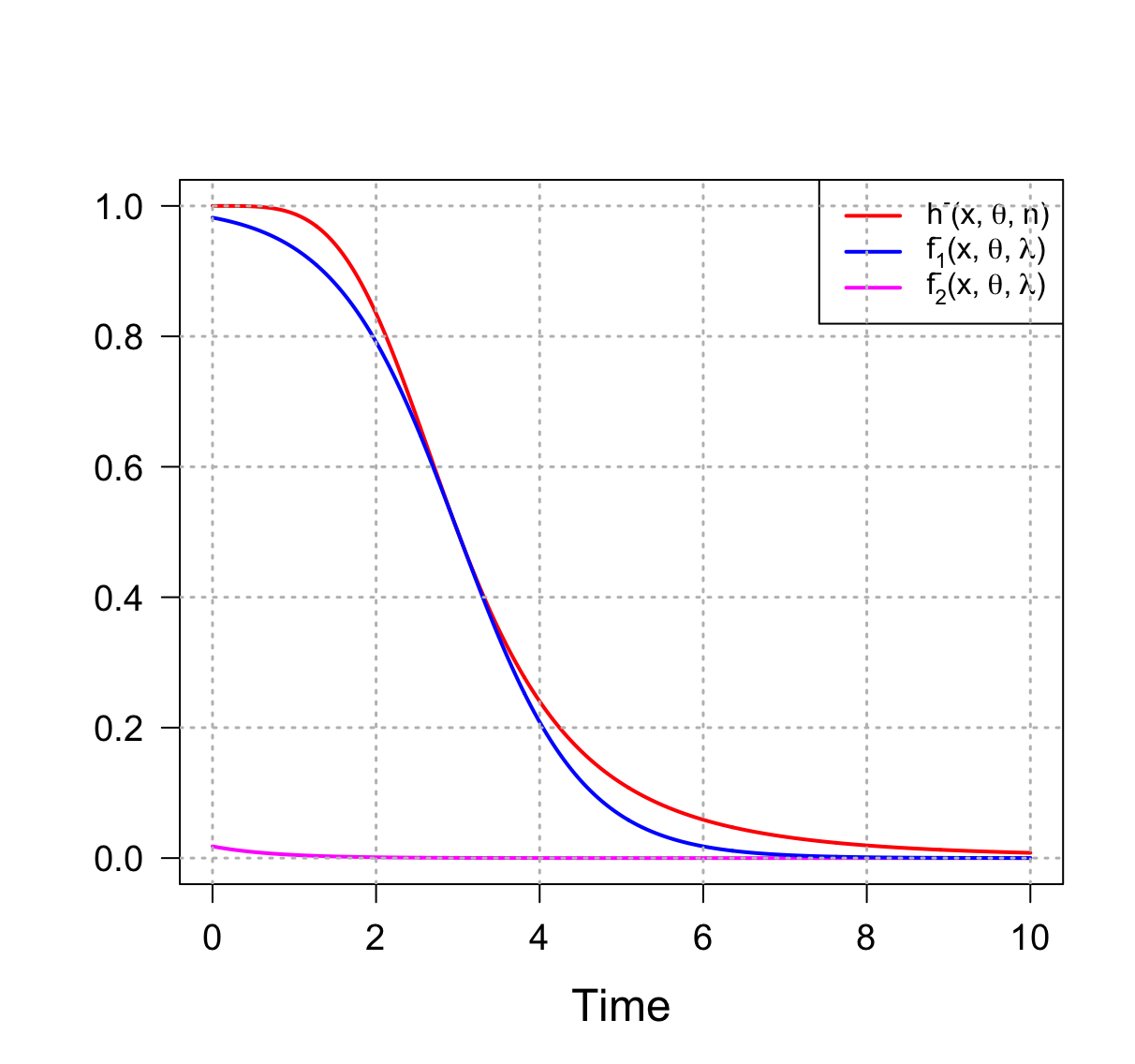}
\caption{Comparison of the decreasing logistic functions $f^-(wx,\theta,\lambda) = 1/(1+e^{-\lambda(\theta-wx)})$ (our model, with $w=1$) and the Samuilik form $f_2^-(x,\theta,\mu) = 1/(1+\exp(-\mu(wx-\theta)))$ (with $w=-1$), alongside the decreasing Hill function $h^-(x,\theta,n) = \theta^n/(x^n+\theta^n)$. Parameters: $n=4$, $\theta=3$, $\lambda = n/\theta \approx 1.333$, $\mu = \lambda$. The midpoint of our $f^-$ is at $x_c = \theta/w = 3 > 0$ (biologically meaningful), while that of $f_2^-$ is at $x_c = \theta/w = -3 < 0$ (outside the physical domain), causing $f_2^-$ to remain nearly zero throughout $x \geq 0$.}
\label{fig:comparison}
\end{figure}

\subsubsection{Mixed Activation and Repression}
Consider a gene regulated by one activator $x_1$ ($w_{i1}>0$) and one
repressor $x_2$ ($w_{i2}>0$). In our weighted
framework~\eqref{eq:multi_gene_weighted_uniform} the regulatory function is
\begin{equation}
f_i^{(1)}(x_1,x_2)
  = \frac{1}{1+e^{-\lambda(w_{i1}x_1 - \theta_{i1})}}
  \cdot
  \frac{1}{1+e^{-\lambda(\theta_{i2} - w_{i2}x_2)}},
  \qquad w_{i1}, w_{i2} > 0.
\label{eq:our_mixed_weighted}
\end{equation}
The effective activation threshold $\theta_{i1}/w_{i1}>0$ and the effective
repression threshold $\theta_{i2}/w_{i2}>0$ can be determined independently
from single-regulator dose-response experiments, and both are positive.
Setting $w_{i1}=w_{i2}=1$ recovers the fixed-weight
form~\eqref{eq:parallel_regulation}.

In the Samuilik model the same circuit is represented by
\begin{equation}
f_i^{(2)}(x_1,x_2)
  = \frac{1}{1+e^{-\mu_i(w_{i1}x_1 + w_{i2}x_2 - \theta_i)}},
  \qquad w_{i1} > 0,\; w_{i2} < 0,
\label{eq:samuilik_mixed}
\end{equation}
with the threshold prescribed as
$\theta_i = (w_{i1}+w_{i2})/2$~\citep{kozlovska2022models,samuilik2022mathematical}.
With the canonical choice $w_{i1}=+1$, $w_{i2}=-1$:
\[
  \theta_i = \frac{1+(-1)}{2} = 0.
\]
A threshold of zero lacks any biological interpretation: it does not
correspond to a measurable inflection point in the dose-response of either
regulator. From a theoretical standpoint, half-maximal production $\kappa_i/2$
is reached when $\sum_j w_{ij}x_j = \theta_i$, so $\theta_i$ should match a
characteristic concentration at which the regulators collectively place the
sigmoid at its inflection point. Yet with $\theta_i = 0$ this balance is
achieved only when the activating and repressing inputs exactly cancel, not at
any characteristic molecular concentration. The prescription
$\theta_i = \sum_j w_{ij}/2$ is mathematically convenient but biologically
unmotivated; the resulting $\theta_i = 0$ offers no interpretable link to
binding affinities or half-maximal concentrations.

The fundamental ambiguity remains: \emph{how should the weights $w_{ij}$ be
assigned?} In our weighted
formulation~\eqref{eq:multi_gene_weighted_uniform} no such ambiguity arises,
because all weights are positive and the effective thresholds
$\theta_{ij}/w_{ij}$ are anchored directly to experimental measurements,
regardless of how many regulators are present.

The shared-threshold architecture of~\eqref{eq:samuilik_model} requires simultaneous fitting of weights and threshold for any AND gate, an ill-conditioned inverse problem whenever two or more regulators are present. By contrast, the product-of-logistics formulation~\eqref{eq:multi_gene_weighted_uniform} assigns each interaction its own effective threshold $\theta_{ij}/w_{ij}>0$, directly identifiable from single-regulator binding-affinity measurements ($K_d$ via EMSA or SPR) or from dose-response characterisation ($\mathrm{EC}_{50}$ from reporter assays). Unlike Hill functions, where non-integer exponents are empirical fitting parameters lacking mechanistic interpretation~\citep{bintu2005transcriptional}, the logistic parameters $\lambda$, $w_{ij}$, $\theta_{ij}$---or equivalently the rescaled $\lambda' = \lambda w_{ij}$ and $\theta'_{ij} = \theta_{ij}/w_{ij}$---all possess direct experimental correlates that enable independent validation.

\subsection{The OR Gate}
\label{subsec:or_comparison}

We now compare the two formulations on \emph{OR} logic: gene~$i$ is
activated by $m$ independent activators $x_1,\ldots,x_m$, where the
presence of \emph{any one} sufficiently expressed activator is sufficient
to drive expression. The Boolean formula is $x_1 \vee \cdots \vee x_m$.

In the Samuilik framework~\citep{samuilik2022mathematical}, activators enter
through \emph{positive} weights $w_{ij}>0$ inside the single shared
increasing logistic:
\begin{equation}
\Phi_{\mathrm{WS}}^{\mathrm{OR}}(x_1,\ldots,x_m)
  = \frac{1}{1+\exp\!\Bigl(-\mu_i\!\Bigl(
      \sum_{j=1}^m w_{ij}\,x_j - \theta_i\Bigr)\Bigr)},
  \quad w_{ij} > 0,
\label{eq:ws_or}
\end{equation}
with the standard threshold
prescription~\citep{samuilik2022mathematical,kozlovska2022models}
\begin{equation}
  \theta_i = \frac{\sum_{j=1}^m w_{ij}}{2} > 0.
\label{eq:ws_or_threshold}
\end{equation}

In our framework, all weights are strictly positive ($w_{ij}>0$) and the
OR gate is derived from the De~Morgan product
formula~\eqref{eq:demorgan_0}:
\begin{equation}
\Phi_{\mathrm{DM}}^{\mathrm{OR}}(x_1,\ldots,x_m)
  = 1 - \prod_{j=1}^{m} \!\bigl(1-f^+(w_{ij}\,x_j,\,\theta_{ij},\,\lambda)\bigr)
  = 1 - \prod_{j=1}^{m} f^-(w_{ij}\,x_j,\,\theta_{ij},\,\lambda),
\label{eq:dm_or}
\end{equation}
where each factor $f^-(w_{ij}x_j,\theta_{ij},\lambda)\in(0,1)$ represents
the \emph{non-activated} fraction for activator~$j$. The formula is the
probability that at least one of $m$ independent activation events occurs;
the complete dynamical system is $\dot{x}_i = \kappa_i\,\Phi(\mathbf{x}) -
\gamma_i x_i$.

\subsubsection{Cross-Input Interaction Structure}

Define $\mathcal{I}_{12}=\partial^2\Phi/(\partial x_1\,\partial x_2)$.

\smallskip
\noindent\textit{Weighted-sum OR.}
Writing $s = \sum_j w_{ij}x_j - \theta_i$ and
$\sigma(s) = (1+e^{-\mu_i s})^{-1}$:
\begin{align*}
\frac{\partial\Phi_{\mathrm{WS}}^{\mathrm{OR}}}{\partial x_1}
  &= \mu_i w_{i1}\,\sigma(s)\bigl(1-\sigma(s)\bigr), \\
\frac{\partial^2\Phi_{\mathrm{WS}}^{\mathrm{OR}}}{\partial x_1\,\partial x_2}
  &= \mu_i^2\,w_{i1}w_{i2}\,\sigma(s)\bigl(1-\sigma(s)\bigr)
     \bigl(1-2\sigma(s)\bigr).
\end{align*}
Since $w_{i1},w_{i2}>0$ in the OR-gate case, $w_{i1}w_{i2}>0$, so the
cross-derivative has the sign of $(1-2\sigma(s))$, which changes sign at
the inflection surface $\{s=0\}$,
i.e.\ $\bigl\{\sum_j w_{ij}x_j = \theta_i\bigr\}$. The two activators
combine \emph{linearly} in~$s$ (since $\partial^2 s/\partial
x_1\partial x_2 = 0$); the apparent interaction is an artefact of the
shared sigmoid's curvature and encodes no genuine biological cooperativity
between the two activators.

\smallskip
\noindent\textit{Weighted De~Morgan OR.}
Write $p_j = f^+(w_{ij}x_j,\theta_{ij},\lambda)$ and
$q_j = 1-p_j = f^-(w_{ij}x_j,\theta_{ij},\lambda)$, so that
$\Phi_{\mathrm{DM}}^{\mathrm{OR}} = 1 - \prod_{j=1}^m q_j$.
For $m=2$:
\begin{align*}
\frac{\partial\Phi_{\mathrm{DM}}^{\mathrm{OR}}}{\partial x_1}
  &= \lambda w_{i1}\,p_1(1-p_1)\,q_2, \\
\frac{\partial^2\Phi_{\mathrm{DM}}^{\mathrm{OR}}}{\partial x_1\,\partial x_2}
  &= -\lambda^2\,w_{i1}w_{i2}\,p_1(1-p_1)\,p_2(1-p_2).
\end{align*}
For general $m \geq 2$, the same calculation yields
\[
  \frac{\partial^2\Phi_{\mathrm{DM}}^{\mathrm{OR}}}{\partial x_1\,\partial x_2}
  = -\lambda^2\,w_{i1}w_{i2}\,p_1(1-p_1)\,p_2(1-p_2)
    \prod_{j=3}^{m} q_j,
\]
where the additional factor $\prod_{j=3}^m q_j \in (0,1)$ further attenuates
the cross-input sensitivity as more activators are present.
In both cases this cross-derivative is \emph{always negative} (since
$p_j\in(0,1)$ and $w_{ij}>0$), encoding a genuine biological interaction:
when activator~$x_2$ is already saturating ($q_2\approx 0$), the marginal
contribution of~$x_1$ is attenuated---the gene cannot be more than fully
activated. This diminishing-returns behaviour is the natural property of OR
logic with independently acting activators.

\subsubsection{Threshold Semantics and Identifiability}

\noindent\textit{Weighted-sum OR.}
The inflection of $\Phi_{\mathrm{WS}}^{\mathrm{OR}}$ with respect to $x_1$
(others fixed) satisfies:
\[
  x_1^* = \frac{\theta_i - \sum_{j\neq 1}w_{ij}x_j}{w_{i1}}.
\]
For equal-weight activators ($w_{ij}=+1$, $\theta_i=m/2$) this reduces to
\[
  x_1^* = \frac{m}{2} - \sum_{j\neq 1}x_j.
\]
This effective threshold \emph{depends on the expression levels of all other activators} and exhibits a three-regime structure. When the other activators are absent ($\sum_{j\neq 1}x_j=0$), one obtains $x_1^*=m/2>0$: the threshold lies inside the physical domain, but grows linearly with $m$, a biologically unrealistic scaling that requires an ever-increasing activator concentration for half-maximal effect as more activators are added to the network. When the other activators reach the critical combined level $\sum_{j\neq 1}x_j=m/2$, the threshold collapses to $x_1^*=0$, at the very boundary of the physical domain. Finally, when the other activators are expressed at moderate-to-high levels ($\sum_{j\neq 1}x_j>m/2$), the threshold becomes $x_1^*<0$, strictly outside the physical domain; under typical biological conditions with multiple co-expressed activators, this regime is generically reached as $m$ grows, placing the per-variable inflection point at a biologically inaccessible negative concentration. In all three regimes the threshold fails independent identifiability. The
shared threshold $\theta_i=m/2$, which grows linearly with network size,
carries no direct biological interpretation as a single-activator
half-activation concentration.

\smallskip
\noindent\textit{Weighted De~Morgan OR.}
The inflection of $\Phi_{\mathrm{DM}}^{\mathrm{OR}}$ with respect to~$x_1$
(others fixed) is determined by the inflection of
$f^+(w_{i1}x_1,\theta_{i1},\lambda)$:
\[
  x_1^* = \frac{\theta_{i1}}{w_{i1}} > 0,
  \quad\text{independent of all other activators.}
\]
The effective threshold $\theta_{i1}/w_{i1}>0$ is the concentration of
activator~$x_1$ at which its individual activating contribution reaches
half-maximum, regardless of what other activators are doing. It is directly
identifiable from a single-activator dose-response experiment and maps to
the $\mathrm{EC}_{50}$ or dissociation constant $K_d$ of the activator--promoter
interaction.

\subsubsection{Single-Activator Sufficiency}

A defining property of OR logic is that a single sufficiently expressed
activator alone can fully activate the target gene, independent of the
states of other activators.

\smallskip
\noindent\textit{Weighted-sum OR.}
As $x_1\to\infty$ with $x_j=0$ for all $j\neq 1$:
\[
  \Phi_{\mathrm{WS}}^{\mathrm{OR}}
  \;\to\;
  \frac{1}{1+e^{-\mu_i(w_{i1}x_1 - \theta_i)}}
  \;\to\; 1,
\]
since $w_{i1}>0$. A single strongly expressed activator drives the gate
toward unity---qualitatively correct. However, under the equal-weight
prescription ($w_{ij}=+1$, $\theta_i=m/2$), the single-activator
half-activation threshold is $x_1^*=m/2$: it grows proportionally with
network size~$m$, becoming increasingly biologically unrealistic as the
number of potential activators grows.

\smallskip
\noindent\textit{Weighted De~Morgan OR.}
As $x_1\to\infty$ with $x_j=0$ for all $j\neq 1$:
\[
  \Phi_{\mathrm{DM}}^{\mathrm{OR}}
  = 1 - f^-(w_{i1}x_1,\theta_{i1},\lambda)\cdot\prod_{j\neq 1}
    f^-(0,\theta_{ij},\lambda)
  \;\to\;
  1 - 0 \cdot \prod_{j\neq 1} f^-(0,\theta_{ij},\lambda)
  = 1.
\]
Full activation is achieved by a single saturated activator regardless
of~$m$, and the activation threshold remains
$x_1\approx\theta_{i1}/w_{i1}$, independent of how many other potential
activators exist in the network.

\subsubsection{Basal Expression When All Activators Are Absent}

\noindent\textit{Weighted-sum OR.}
When all activators are absent ($x_j=0$ for all~$j$):
\[
  \Phi_{\mathrm{WS}}^{\mathrm{OR}}\big|_{x_j=0}
  = \frac{1}{1+e^{\mu_i\theta_i}}.
\]
Since $\theta_i=m/2>0$, the exponent $\mu_i\theta_i>0$ and
$\Phi_{\mathrm{WS}}^{\mathrm{OR}}|_{x_j=0}<1/2$. For large $m$, the
equal-weight basal level $1/(1+e^{\mu_i m/2})\to 0$: the gene approaches
silence when all activators are absent, qualitatively appropriate for an OR
gate. However, the basal level is coupled to the shared threshold and
cannot be tuned independently of the activation threshold.

\smallskip
\noindent\textit{Weighted De~Morgan OR.}
When all activators are absent ($x_j=0$ for all~$j$):
\[
  \Phi_{\mathrm{DM}}^{\mathrm{OR}}\big|_{x_j=0}
  = 1 - \prod_{j=1}^m f^-(0,\theta_{ij},\lambda)
  = 1 - \prod_{j=1}^m \frac{1}{1+e^{-\lambda\theta_{ij}}}.
\]
Each factor
$f^-(0,\theta_{ij},\lambda)=1/(1+e^{-\lambda\theta_{ij}})\in(1/2,\,1)$
for $\theta_{ij}>0$. Since each factor is strictly less than~$1$, increasing
the number $m$ of activators decreases the product $\prod_j f^-(0,\theta_{ij},\lambda)$ and
hence raises the basal OR level $1-\prod_j f^-$.
Each absent activator contributes a small
residual activation $f^+(0,\theta_{ij},\lambda)>0$ through its sigmoid
tail, and the OR formula $1-\prod_j(1-f^+)$ compounds these residual
contributions. The magnitude of this effect is controlled by the
products $\lambda\theta_{ij}$: when $\lambda\theta_{ij}$ is moderate (say $\lambda\theta_{ij}\approx 4$,
typical for $n=4$ Hill matching), each factor lies near~$0.98$, and the basal level grows
slowly with $m$ but is non-negligible already for ten or so activators.
This is a structural consequence of
the logistic function's strict positivity at zero input, not a parameter
artefact, and reflects the biologically appropriate emergent property that
a gene regulated by many independent activators has slightly elevated basal
expression due to the cumulative residual partial activation from each
activator's sigmoid tail. Each factor can be independently tuned
via~$\theta_{ij}$ without affecting the others.

\subsection{The NOR Gate (Pure Inhibitor Case)}
\label{subsec:nor_comparison}

We analyse the scenario in which gene~$i$ is regulated by $m$ repressors
$x_1,\ldots,x_m$ exclusively, with \textbf{no activators present}. The
Boolean logic is therefore \textbf{NOR}: gene~$i$ is active if and only if
\emph{none} of the $m$ repressors is sufficiently expressed,
i.e.\ $\neg x_1 \wedge \neg x_2 \wedge \cdots \wedge \neg x_m$.

\begin{remark}[Relationship between the OR and NOR gates]
The NOR gate is the complement of the OR gate applied to the same set of
variables reinterpreted as repressors:
$\mathrm{NOR}(x_1,\ldots,x_m) = \neg(x_1\vee\cdots\vee x_m) =
\prod_{j=1}^m f^-(x_j)$. Consequently, every structural advantage that the
De~Morgan product OR gate holds over the Samuilik weighted-sum OR gate is
\emph{inherited by and compounded in} the NOR gate, since repression
compounds the pathologies already present in the OR case.
\end{remark}

In the Samuilik framework~\citep{samuilik2022mathematical}, repressors enter
through \emph{negative} weights $w_{ij}<0$ inside the single shared
increasing logistic:
\begin{equation}
\Phi_{\mathrm{WS}}^{\mathrm{NOR}}(x_1,\ldots,x_m)
  = \frac{1}{1+\exp\!\Bigl(-\mu_i\!\Bigl(
      \sum_{j=1}^m w_{ij}\,x_j - \theta_i\Bigr)\Bigr)},
  \quad w_{ij} < 0,
\label{eq:ws_nor}
\end{equation}
with the standard threshold
prescription~\citep{samuilik2022mathematical,kozlovska2022models}
\begin{equation}
  \theta_i = \frac{\sum_{j=1}^m w_{ij}}{2} < 0.
\label{eq:ws_nor_threshold}
\end{equation}

In our framework, all weights are strictly positive ($w_{ij}>0$) and each
repressor~$x_j$ enters through its own \emph{decreasing} logistic:
\begin{equation}
\Phi_{\mathrm{DM}}^{\mathrm{NOR}}(x_1,\ldots,x_m)
  = \prod_{j=1}^{m} \frac{1}{1+e^{\,\lambda(w_{ij}x_j - \theta_{ij})}},
\label{eq:dm_nor}
\end{equation}
where each factor $f^-(w_{ij}x_j,\theta_{ij},\lambda)\in(0,1)$ represents
the uninhibited fraction for repressor~$j$. The complete dynamical system
is $\dot{x}_i = \kappa_i\,\Phi(\mathbf{x}) - \gamma_i x_i$.

We note here that the Samuilik NOR gate inherits the threshold-scaling
pathology of the OR gate and additionally acquires a \emph{sign-flip
pathology}: the prescribed shared threshold
$\theta_i = \sum_j w_{ij}/2 < 0$ is strictly negative and diverges to
$-\infty$ as $m$ grows. This is the same structural defect identified in
Section~\ref{subsec:and_comparison} for the single-repressor Samuilik
function (cf.\ Figure~\ref{fig:comparison}), now extending to the
multi-repressor setting and compounded by network-size scaling. The
weighted De~Morgan product NOR avoids both pathologies: each repressor's
effective threshold $\theta_{ij}/w_{ij}>0$ is positive,
context-independent, and experimentally measurable irrespective of how
many other repressors exist in the network.

\begin{remark}[Alternative unified weighted formulation]
A unified weighted logistic formulation encoding both activation and repression within a single logistic function---with positive weights for activation and negative weights for repression---is also possible, and can in principle offer analytical advantages for high-dimensional systems by reducing the number of nonlinear terms. Achieving functional equivalence with the product-of-logistics formulation~\eqref{eq:multi_gene_system} requires introducing bias-correction terms that depend on the number of regulators per gene; a detailed treatment of this unified framework, including rigorous proofs of equivalence, derivation of the bias-correction terms, and applications to control design, will appear in a forthcoming paper.
\end{remark}


\section{Control Advantages of Logistic Functions in Gene Regulatory Networks}
\label{sec:control}

\subsection{Always-Positive Production and Decoupled Parameters}
\label{subsec:control_basal}

The logistic function provides fundamental advantages over Hill functions for
controlling biological networks, stemming from its non-zero response at minimal
expression levels and analytical tractability. The logistic function
$f^+(x,\theta,\lambda) = 1/(1 + e^{-\lambda(x - \theta)})$ maintains
$f^+(0,\theta,\lambda) = 1/(1 + e^{\lambda\theta}) > 0$, ensuring continuous
regulatory control even when gene expression drops to zero, whereas the Hill
function $h^+(x,\theta,n) = x^n/(x^n + \theta^n)$ vanishes identically at
$x = 0$, creating controllability gaps that compromise feedback regulation.
Biologically, genes exhibit persistent non-zero basal (leaky) expression even
without activators, stabilising low-expression states across multiple cell
divisions as observed in the GAL network in yeast and promoter leakage in
auto-regulatory circuits~\citep{acar2005enhancement,huang2015effects}. Without
repressor proteins, transcription proceeds at sustained high rates as seen in
bacterial operons (e.g., the \textit{lac} system) and phage
lambda~\citep{jacob1961genetic,ackers1982quantitative}, and the logistic
repression function $f^-(0,\theta,\lambda) = 1/(1 + e^{-\lambda\theta})$
provides tunable control over baseline expression via the product
$\lambda\theta$, whereas Hill functions rigidly fix $h^-(0) = 1$, offering no
intrinsic parameter-based modulation. Similarly, the activation function
$f^+(0,\theta,\lambda) = 1/(1 + e^{\lambda\theta})$ provides tunable control of
the basal expression rate via the same product $\lambda\theta$.

The parameters $\lambda$ (steepness) and $\theta$ (threshold) map directly to
tunable molecular properties: $\theta$ represents the dissociation constant
for binding, adjustable through operator mutations or protein
engineering~\citep{zuo2014high,falcon2000operator}, while $\lambda$ governs
cooperativity, modifiable via multimeric repressors or auxiliary binding
sites~\citep{santillan2008use}. These parameters are experimentally accessible
through synthetic biology techniques such as promoter libraries, directed
evolution, and optogenetics~\citep{kumar2023diya,razo2018tuning}. The full
parameter independence inherent in the logistic formulation, where threshold
position $\theta$ and transition steepness $\lambda$ are decoupled, proves
particularly advantageous for control design, enabling independent tuning of
the decision threshold and response sensitivity without compensatory parameter
adjustments. This contrasts sharply with Hill functions, where the maximum
slope $n/(4\theta)$ couples both parameters, requiring simultaneous
readjustment of cooperativity and threshold to maintain desired control
characteristics.

\subsection{Control Strategies Enabled by the Logistic Structure}
\label{subsec:control_strategies}

The structural properties identified above support several distinct control strategies. \emph{Multiplicative control} modulates production rates via control inputs $u_i \geq 0$ in the form
\[
\dot{x}_i = \kappa_i \,
  \frac{u_i}{1 + e^{-\sigma_i\lambda(x_j - \theta_i)}}
  - \gamma_i x_i,
\]
ensuring non-zero controllability at all expression levels. \emph{Steepness modulation} adjusts $\lambda$ through control inputs in
\[
\dot{x}_i = \kappa_i \, f_i\!\left(x_1, \ldots, x_N; u_i\lambda, \theta_{ij}\right)
  - \gamma_i x_i,
\]
yielding linear, predictable control over regulatory sensitivity without the numerical instabilities associated with large Hill coefficients. \emph{State-feedback control} relies on additive corrections $\dot{x}_i = \kappa_i f_i(\mathbf{x}) - \gamma_i x_i + u_i$ with $u_i = -K_i(x_i - x_{d,i})$, leveraging continuous responsiveness to achieve exponential convergence to desired setpoints. \emph{Sliding-mode control} benefits from the logistic function's smooth, bounded character, ensuring robust performance under parameter uncertainties~\citep{belgacem2020control,chambon2020qualitative,sawlekar2015biomolecular}, whereas Hill-based models suffer fragility in equivalent control laws because of zero production at $x = 0$, undefined expressions, and fractional exponents. Finally, \emph{model predictive control} exploits the closed-form logit inverse
\[
f^{-1}(y) = \theta + \frac{1}{\lambda}\ln\!\left(\frac{y}{1-y}\right),
\]
which enables exact feedback linearisation and gradient-based optimisation with smooth derivatives, yielding well-conditioned optimisation problems compared with Hill functions' power-law nonlinearities~\citep{del2015biomolecular,lugagne2024deep,faquir2025computational}.

To make the feedback-linearisation claim concrete, consider a single autoregulated gene with controlled production rate,
\begin{equation}
\dot{x} \;=\; u(t)\,\kappa\,f^{+}(x,\theta,\lambda) \;-\; \gamma x,
\qquad u(t)\in[u_{\min},u_{\max}],\;u_{\min}>0,
\label{eq:scalar_logistic_control}
\end{equation}
where $u(t)$ is a multiplicative control input (e.g.\ an inducer concentration or a light intensity in optogenetic platforms~\citep{lugagne2024deep}).  Suppose we wish the closed-loop dynamics to track a desired reference $y_{\mathrm{ref}}(t)\in(0,1)$ in the \emph{normalised output} $y=f^{+}(x,\theta,\lambda)$.  Differentiating $y$ along trajectories of~\eqref{eq:scalar_logistic_control},
\[
\dot{y} \;=\; \lambda y(1-y)\,\dot{x}
       \;=\; \lambda y(1-y)\bigl(u\kappa y - \gamma x\bigr),
\qquad x \;=\; \theta + \tfrac{1}{\lambda}\ln\!\tfrac{y}{1-y}.
\]
The feedback law
\begin{equation}
u(y) \;=\; \frac{1}{\kappa y}\!\left[
   \gamma\!\left(\theta + \tfrac{1}{\lambda}\ln\!\tfrac{y}{1-y}\right)
   + \frac{v(t)}{\lambda y(1-y)}
\right]
\label{eq:feedback_linearisation}
\end{equation}
exactly cancels the nonlinearity and reduces the closed-loop output dynamics to the integrator $\dot{y}=v(t)$, with $v$ free to be designed by classical linear methods (PI control, LQR, MPC).  Crucially, every quantity in~\eqref{eq:feedback_linearisation} is well-defined and analytic on the open interval $y\in(0,1)$; the closed-form logit inverse $\theta+\lambda^{-1}\ln\!\tfrac{y}{1-y}$ uses only $\exp$ and $\log$, whereas the Hill inverse $\theta(y/(1-y))^{1/n}$ carries a fractional power that is non-smooth at the origin for non-integer $n$.  The control input $u$ is bounded on every closed subinterval $y\in[\eta,1-\eta]\subset(0,1)$: the inverse $\theta+\lambda^{-1}\ln\!\tfrac{y}{1-y}$ and the gain factor $1/\bigl(\lambda y(1-y)\bigr)$ are both bounded there, so the actuator range required for tracking is finite---set by $\eta$ and by the bound on $\dot{y}_{\mathrm{ref}}$ along the reference. This is the standard tracking domain of feedback linearisation and is enforced in practice by the saturation bounds $[u_{\min},u_{\max}]$ on the actuator.  The same construction extends straightforwardly to the multi-input setting~\eqref{eq:multi_gene_system} via the product structure of $\Phi_i$.

The qualitative tracking claim can be made precise as an explicit
exponential decay of the error.

\begin{proposition}[Exact exponential tracking via feedback linearisation]
\label{prop:tracking}
Let $\kappa,\gamma,\lambda,\theta>0$ and $K>0$, and consider the
controlled scalar system~\eqref{eq:scalar_logistic_control}.  Let
$y_{\mathrm{ref}}\in C^{1}([0,\infty);(0,1))$ be a reference trajectory
that takes values in a fixed compact subinterval $[\eta,1-\eta]\subset(0,1)$
for some $\eta\in(0,1/2)$.  Apply the feedback law~\eqref{eq:feedback_linearisation}
with auxiliary input
\[
v(t) \;=\; \dot{y}_{\mathrm{ref}}(t) - K\bigl(y(t)-y_{\mathrm{ref}}(t)\bigr).
\]
Provided the resulting actuator command $u(y(t))\in[u_{\min},u_{\max}]$
for all $t\ge 0$ (which holds whenever $y(t)$ stays in the tracking domain
$[\eta,1-\eta]$, by the bound stated above), the tracking error
$e(t):=y(t)-y_{\mathrm{ref}}(t)$ satisfies the linear closed-loop ODE
\[
\dot{e}(t)\;=\;-K\,e(t),
\qquad
e(t)\;=\;e(0)\,e^{-Kt}\quad\forall t\ge 0,
\]
so $|e(t)|=|e(0)|\,e^{-Kt}\to 0$ at the chosen exponential rate $K$.
\end{proposition}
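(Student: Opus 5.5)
The plan is to compute the closed-loop dynamics of the normalised output $y=f^{+}(x,\theta,\lambda)$ directly. Since $f^{+}$ is a smooth bijection from $\mathbb{R}$ onto $(0,1)$ with inverse $x=\theta+\lambda^{-1}\ln\bigl(y/(1-y)\bigr)$, the state equation~\eqref{eq:scalar_logistic_control} is equivalent to an ODE for $y$ on $(0,1)$. By the self-referential identity $\partial_x f^{+}=\lambda f^{+}(1-f^{+})$ and the chain rule, $\dot y=\lambda y(1-y)\bigl(u\kappa y-\gamma x\bigr)$. The first step is to substitute the feedback law~\eqref{eq:feedback_linearisation} into this expression. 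Then $u\kappa y=\gamma x+v/(\lambda y(1-y))$, where $x$ is written through the logit inverse. The $\gamma x$ terms cancel exactly, and the prefactor $\lambda y(1-y)>0$ cancels the gain, leaving $\dot y=v(t)$. This is pure algebra, valid at every instant at which $y(t)\in(0,1)$, and that holds automatically because $y=f^{+}(x)$ for real $x$.

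The second step inserts the auxiliary input $v=\dot y_{\mathrm{ref}}-K(y-y_{\mathrm{ref}})$. Subtracting $\dot y_{\mathrm{ref}}$ gives $\dot e=-Ke$, a scalar linear ODE with constant coefficient, whose unique solution is $e(t)=e(0)e^{-Kt}$. Taking absolute values yields the stated decay at rate $K$.

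The point requiring care is that the closed-loop ODE is genuinely well-posed on $[0,\infty)$. The closed-loop vector field in $y$ is $\dot y=\dot y_{\mathrm{ref}}(t)-K(y-y_{\mathrm{ref}}(t))$, which is affine in $y$ and continuous in $t$ because $y_{\mathrm{ref}}\in C^{1}$. I will argue that it has a unique global solution given explicitly by $y(t)=y_{\mathrm{ref}}(t)+e(0)e^{-Kt}$. This solution is a convex-type interpolation that stays inside $(0,1)$ whenever $y(0)$ and $y_{\mathrm{ref}}$ do. More precisely, $|y-y_{\mathrm{ref}}|\le|e(0)|$, so the solution remains in $[\eta,1-\eta]$ as soon as the initial error is admissible. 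Under the standing proviso that $u(y(t))\in[u_{\min},u_{\max}]$, the saturation never becomes active, so the applied control coincides with the law~\eqref{eq:feedback_linearisation}.

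The trajectory $x(t)$ is then recovered by applying the logit inverse to $y(t)$. It is $C^{1}$ and solves~\eqref{eq:scalar_logistic_control} with this $u$, which justifies working in $y$-coordinates throughout. Consistency with Proposition~\ref{prop:wellposedness}, through uniqueness in $x$, follows because the feedback $u$ is locally Lipschitz in $x$ on the tracking domain. I expect this domain and saturation bookkeeping to be the only mildly delicate part, since the error identity itself is immediate once the cancellation is carried out.
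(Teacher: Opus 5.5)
Your core argument is the paper's proof: substituting \eqref{eq:feedback_linearisation} into $\dot y=\lambda y(1-y)(u\kappa y-\gamma x)$ cancels the nonlinearity and leaves $\dot y=v$. The choice of $v$ then gives $\dot e=-Ke$, hence $e(t)=e(0)e^{-Kt}$. This is enough for the statement as posed, because the proviso on $u(y(t))$ for all $t\ge 0$ already presupposes a closed-loop trajectory on $[0,\infty)$.

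Your supplementary well-posedness paragraph, however, makes a false claim. The function $y(t)=y_{\mathrm{ref}}(t)+e(0)e^{-Kt}$ is a convex combination of $y(0)$ and the reference only when $y_{\mathrm{ref}}$ is constant. For a moving reference it need not stay in $[\eta,1-\eta]$, nor even in $(0,1)$.

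For example, take $\eta=0.1$, $y(0)=0.1$, $y_{\mathrm{ref}}(0)=0.9$, and let $y_{\mathrm{ref}}$ descend smoothly to $0.1$ by a time $\tau$ with $K\tau<\ln 8$. Then $y(\tau)=0.1-0.8e^{-K\tau}<0$. Since $y=f^{+}(x)$ cannot leave $(0,1)$, what actually happens is that $x(t)\to-\infty$ in finite time while the commanded $u$ becomes negative and unbounded. So the proviso is a genuine hypothesis, not a consequence of $y(0),y_{\mathrm{ref}}(t)\in[\eta,1-\eta]$.

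Likewise, $|y-y_{\mathrm{ref}}|\le|e(0)|$ only places $y(t)$ in $[\eta-|e(0)|,\,1-\eta+|e(0)|]$. That interval lies inside $(0,1)$ when $|e(0)|<\eta$, but it is not inside $[\eta,1-\eta]$.

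If you want to justify global existence, do it through the proviso itself. The closed-loop field $\dot x=v/(\lambda f^{+}(x)(1-f^{+}(x)))$ is continuous in $t$ and smooth in $x$, so there is a unique maximal solution. While $u\in[u_{\min},u_{\max}]$, this solution satisfies $|\dot x|\le u_{\max}\kappa+\gamma|x|$, which rules out finite-time blow-up. The solution is therefore global with $y=f^{+}(x)\in(0,1)$ throughout, which is all the cancellation step needs.
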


\begin{proof}
By construction, the feedback law~\eqref{eq:feedback_linearisation} reduces
the closed-loop output to $\dot{y}=v(t)$.  Substituting the choice of $v$,
$\dot{y} = \dot{y}_{\mathrm{ref}} - K(y-y_{\mathrm{ref}})$, hence
$\dot{e} = \dot{y} - \dot{y}_{\mathrm{ref}} = -Ke$.  Integration yields
$e(t)=e(0)e^{-Kt}$.
\end{proof}

Three remarks are in order. First, the rate $K$ is a free design
parameter; larger $K$ gives faster tracking at the cost of a larger
control authority $|v|$, hence a larger required range of $u$. Second,
because every $y$-dependent factor in the feedback law is bounded on
$[\eta,1-\eta]$, the actuator dynamic range needed for tracking on that fixed
subinterval is finite and is governed by $\eta$ and by the bound on
$\dot{y}_{\mathrm{ref}}$, not by the amplitude of the reference itself.  Third,
the corresponding
construction for the Hill function rests on the fractional-power inverse
$\theta(y/(1-y))^{1/n}$; this is closed form, but its root---unlike the
logit---is non-smooth at the origin and turns complex for any negative
excursion, so the logistic yields the cleaner actuation law.

\subsection{Multi-Dimensional and Combinatorial Control}
\label{subsec:control_multi}

For multi-dimensional systems with cooperative regulation combining activator
and repressor effects, the controlled dynamics for parallel regulation become
\[
\dot{x}_i = \kappa_i
\!\left(
  \prod_{j\in\mathcal{A}_i}
    \frac{1}{1 + e^{-u_{ij}\lambda(x_j - \theta_{ij})}}
  \;\cdot\;
  \prod_{k\in\mathcal{R}_i}
    \frac{1}{1 + e^{-u_{ik}\lambda(\theta_{ik} - x_k)}}
\right)
- \gamma_i x_i,
\]
where $u_{ij}, u_{ik} \geq 0$ modulate the steepness of each regulatory
interaction, ensuring non-zero production at zero regulator concentrations,
unlike Hill-based models where the regulatory term
\[
f_i(x_j, x_k) =
  \frac{x_j^n}{x_j^n + \theta_{ij}^n} \cdot
  \frac{\theta_{ik}^n}{x_k^n + \theta_{ik}^n}
\]
vanishes identically whenever the activator concentration $x_j$ reaches zero
(since $h^+(0) = 0$), rendering systems uncontrollable in activator-absent states.

An alternative control strategy modulates regulatory influences directly
through the control matrix elements $u_{ij}$ and $u_{ik}$,
\[
\dot{x}_i = \kappa_i
\!\left(
  \prod_{j\in\mathcal{A}_i}
    \frac{1}{1 + e^{-\lambda(u_{ij}x_j - \theta_{ij})}}
  \;\cdot\;
  \prod_{k\in\mathcal{R}_i}
    \frac{1}{1 + e^{-\lambda(\theta_{ik} - u_{ik}x_k)}}
\right)
- \gamma_i x_i,
\]
with $u_{ij}, u_{ik} \geq 0$,
enabling dynamic modulation of regulatory influences for targeted interventions
in optogenetic applications.

\subsection{Linearisation, Controllability, and Accessibility}
\label{subsec:control_linear}

In contrast to Hill functions, the logistic model admits tractable linear
approximations both near the origin and near the inflection point (threshold)
when $\lambda$ is small, yielding an analytically tractable linearised system
$\dot{\mathbf{x}} = A\mathbf{x} + \mathbf{b} + B\mathbf{u}$. This facilitates
controllability analysis via rank conditions on the Kalman matrix
$\mathcal{C} = \begin{bmatrix} B & AB & \cdots & A^{N-1}B \end{bmatrix}$,
the application of linear control tools (pole placement, LQR,
$\mathcal{H}_\infty$), and systematic gain selection for desired convergence
and robustness. The bilinear structure that emerges in controlled systems
enables accessibility analysis through Lie-algebra methods.

Compared with Hill functions---which exhibit zero production at activator
absence (rendering systems uncontrollable in low-expression regimes),
non-smooth behaviour for large $n$ (causing numerical instability), complex
rational expressions (complicating SMC and MPC design), and lack of
closed-form derivatives for non-integer $n$ (hindering optimisation)---the
logistic functions provide always-positive production maintaining
controllability, smooth bounded responses ensuring numerical stability,
closed-form derivatives and inverses facilitating analytical control design,
and parameters that map directly onto tunable biological mechanisms.

\subsection{Practical Considerations and Experimental Implementation}
\label{subsec:control_practical}

In practical sliding-mode implementations, quasi-sliding-mode controllers using DNA strand-displacement reactions have been demonstrated by \citet{sawlekar2015biomolecular}, outperforming traditional linear controllers with faster tracking response and no overshoot, both critical for genetic networks requiring precise control. Chattering---the rapid switching caused by discontinuous sign functions---is mitigated through the introduction of a boundary layer, replacing $\operatorname{sign}(s_i)$ with the smooth saturation
\[
\operatorname{sat}(s_i, \epsilon)
= \begin{cases}
    \operatorname{sign}(s_i) & \text{if } |s_i| > \epsilon, \\
    s_i / \epsilon          & \text{if } |s_i| \leq \epsilon,
  \end{cases}
\]
where $\epsilon > 0$ defines the boundary-layer thickness, ensuring smooth control transitions within $|s_i| \leq \epsilon$ while maintaining robust reaching behaviour outside this region.

For multi-gene networks such as the repressilator with cyclic inhibitory interactions, the logistic framework's analytical tractability enables systematic design of feedback controllers that stabilise oscillations at desired amplitudes or frequencies, synchronise multiple circuits, track time-varying trajectories, and compensate for cell-to-cell variability~\citep{belgacem2020control,chambon2020qualitative}. Linear approximations yield cyclic coupling structures analysable via circulant matrix theory, while MPC strategies exploit predictive capability for phase-locking and for maintaining oscillation characteristics under disturbances. These control strategies are experimentally feasible through optogenetics (which provides millisecond-precision control inputs), fluorescent reporters (which enable real-time expression measurements to close feedback loops), and microfluidic platforms (which permit parallel control of thousands of cells for population studies); the Khammash laboratory at ETH Z\"urich has pioneered such experimental platforms~\citep{kumar2023diya}.

A detailed treatment of the control strategies sketched above is left for future work. Promising directions include extending these strategies to stochastic gene networks accounting for intrinsic and extrinsic noise in low-copy-number regimes; integrating spatial dynamics and cell-to-cell communication in multicellular systems requiring distributed control architectures; combining logistic-based mechanistic models with machine-learning components for adaptive control in uncertain or time-varying environments; experimentally validating these control strategies in optogenetic systems with millisecond-precision real-time feedback; and exploring applications beyond gene regulation, including metabolic pathway control, cell-cycle regulation, and morphogen-gradient formation in developmental biology.

In summary, logistic-based models provide the mathematical tractability, biological fidelity, and computational robustness needed to control gene regulatory networks. Their non-zero basal activity ensures continuous controllability---essential for feedback architectures that must respond to weak inputs---while their smooth, analytically tractable form enables the systematic application of advanced control methods ranging from sliding-mode to model-predictive control, with direct experimental implementation pathways through modern synthetic biology.

\section{Conclusion}
\label{sec:conclusion}

This paper has developed a complete product-of-logistics framework for
modelling gene regulatory networks and established its core analytical
properties. The framework deploys increasing logistic functions for activation
and decreasing logistic functions for repression, each precisely where it is
biologically appropriate, and thereby preserves the distinct sigmoidal dynamics
of the two regulatory modes while inheriting the structural advantages of the
logistic form.

These advantages are not incidental. The global $C^\infty$ regularity of
logistic functions removes the origin singularities that afflict Hill functions
whenever the cooperativity exponent is non-integer. The self-referential
derivative identity $f' = \lambda f(1-f)$ reduces Jacobian entries to products
of function values, eliminating the fractional-power evaluations that make
Hill-function Jacobians expensive and ill-conditioned near zero. The closed-form
logit inverse $f^{-1}(y) = \theta + \lambda^{-1}\ln(y/(1-y))$ supports exact
threshold calculations and feedback linearisation. The full decoupling of
steepness $\lambda$ and threshold $\theta$---whose maximum slope $\lambda/4$
depends on $\lambda$ alone, in contrast to the Hill maximum slope $n/(4\theta)$,
which entangles both design variables---allows independent tuning of the
decision threshold and the response sensitivity.

For the two-gene negative-feedback oscillator, Jacobian analysis and the
Routh--Hurwitz criterion establish local asymptotic stability at the unique
equilibrium for all biologically meaningful parameter values. Combined with
Bendixson's negative criterion---which rules out closed orbits using the
constant negative divergence $-(\gamma_1+\gamma_2)$ throughout
$\mathbb{R}^{2}$---and the Poincar\'e--Bendixson theorem applied to the
forward-invariant box, this upgrades to \emph{global} asymptotic stability on
$\mathbb{R}^{2}$ (Theorem~\ref{thm:no_hopf}). A key structural consequence is
that the two-dimensional logistic oscillator cannot undergo a Hopf bifurcation:
the trace of the Jacobian cannot be made to vanish by any choice of $\lambda$,
$\kappa_i$, or $\theta_i$, so sustained limit cycles require the introduction of
time delays~\citep{belgacem2025glass,farcot2019chaos}.

The complementary two-gene motif---mutual repression---was analysed through
the genetic toggle switch of~\citet{gardner2000construction}.
Proposition~\ref{prop:toggle} establishes that the logistic toggle has purely
real Jacobian eigenvalues and a negative trace at every equilibrium, hence
cannot oscillate, and that the determinant identity
$\det J=\gamma_1\gamma_2\bigl(1-T'(x_1^{*})\bigr)$ renders the switch
monostable or bistable according to whether a scalar return map crosses the
diagonal with slope below or above unity; in the symmetric case the
monostable--bistable transition is a supercritical pitchfork at
$\rho\lambda=4$. A fourth-order Runge--Kutta simulation in R confirms the
bistable phase portrait and this threshold.

At the network scale, the general multi-gene system was formulated as a
product-of-logistics map. Proposition~\ref{prop:demorgan} establishes the three
structural properties of the recursive De~Morgan map---range, Boolean
consistency under uniform convergence, and De~Morgan duality---and
Proposition~\ref{prop:wellposedness} establishes that the resulting multi-gene
ODE is globally well-posed, forward-invariant on the box
$\prod_i[0,\kappa_i/\gamma_i]$, and globally Lipschitz with the explicit
constant
$L=\max_i\bigl(\gamma_i+\kappa_i\lambda(|\mathcal{A}_i|+|\mathcal{R}_i|)/4\bigr)$,
which is bounded uniformly in the network size whenever in-degree and $\lambda$
are bounded. Theorem~\ref{thm:boolean_recovery} closes the loop between the
continuous and discrete descriptions: under the natural threshold condition
$0<\theta_i<\kappa_i/\gamma_i$, every steady state of the Boolean network is
recovered, for sufficiently steep regulatory response, as an exponentially
stable equilibrium of the continuous model, so that the translation provably
refines rather than distorts the original Boolean analysis. The general
$m$-clause recursive product formula
$\Phi\!\left(\bigvee_{k=1}^m C_k\right) = 1 - \prod_{k=1}^m\bigl(1 -
\Phi(C_k)\bigr)$ does not appear to have been stated explicitly in the prior
literature on continuous approximations of Boolean GRNs;
\citet{wittmann2009transforming} recover the algebraically equivalent two-input
instance by polynomial interpolation but do not identify it as a product of
decreasing logistic functions nor state the general recursive form. As a
concrete application, the $11$-gene Traynard mammalian cell-cycle Boolean
network~\citep{Traynard2016}---governing Cdc20, Cdh1, CycA, CycB, CycD, CycE,
E2F, p27, Rb, Skp2, and UbcH10---was translated automatically into a continuous
ODE system; the resulting $11$-dimensional system integrates without warnings,
all state variables remain non-negative and---after the initial transient in
which the two components launched above their ceiling relax into it---bounded by
$\kappa_i/\gamma_i$, and in the proliferative regime the trajectories settle
onto a sustained limit cycle that reproduces the cyclic attractor of the Boolean
network.

The framework also accommodates explicit interaction weights.
Section~\ref{sec:weight_equivalence} establishes formally that incorporating
real-valued weights into the product-of-logistics formulation is equivalent to
the fixed-weight formulation after the parameter rescaling
$\lambda'_{ij} = \lambda_{ij} w_{ij}$ and $\theta'_{ij} =
\theta_{ij}/w_{ij}$; the two parameterisations produce identical dynamics and
differ only in how steepness and threshold information is distributed across
parameters.

Finally, the detailed comparison with the Samuilik weighted-sum
formulation~\citep{samuilik2022mathematical} across the AND, OR, and NOR logic
gates shows that the product-of-logistics formulation offers biological
interpretability that single-sigmoid, shared-threshold alternatives cannot
match. Each threshold $\theta_{ij}$ is directly interpretable as a dissociation
constant determinable from independent experimental measurements, enabling
decomposed validation; the product structure encodes AND logic transparently;
and, unlike the single increasing sigmoid with signed weights, it does not
place repressor critical points at biologically meaningless negative
concentrations. On the repression side, the decreasing logistic function
naturally approaches---but need not exactly reach---unity in the absence of
repressor, capturing polymerase saturation, resource competition, and
stochastic promoter switching; for applications requiring exact normalisation,
the scaled variant $f^-_{\mathrm{scaled}}(x,\theta,\lambda) =
(1+e^{-\lambda\theta})\,f^-(x,\theta,\lambda)$ restores unit value at zero
repressor concentration, with a scaling factor that is negligible under typical
parameter regimes ($\lambda\theta \geq 4$).

The final contribution of this paper concerns control. The always-positive
logistic production rate eliminates the controllability gaps that Hill
functions create at zero concentration, where zero production precludes
recovery through intrinsic dynamics alone; multiplicative control, steepness
modulation, sliding mode control, and model predictive control are all directly
applicable at expression levels where Hill-based formulations lose
controllability. The closed-form logit inverse enables an explicit
feedback-linearisation construction that exactly cancels the sigmoid
nonlinearity; Proposition~\ref{prop:tracking} formalises this as exact
exponential tracking $|y(t)-y_{\mathrm{ref}}(t)| =
|y(0)-y_{\mathrm{ref}}(0)|\,e^{-Kt}$ at a chosen design rate $K$, on any
reference confined to a compact subinterval of $(0,1)$. The full decoupling of
threshold $\theta$ and steepness $\lambda$ permits independent tuning of the
decision threshold and the response sensitivity.

The framework developed here is the foundation for the companion
paper~\citep{belgacem2026numerical}, which examines the biological realism of
the logistic choice at low expression levels and the numerical reliability of
Boolean-derived ODE integration relative to Hill functions with non-integer
exponents. By replacing Hill functions with their logistic
counterparts while preserving sigmoidal dynamics, researchers can build on
decades of accumulated Hill-based modelling intuition while gaining the
analytical tractability that demanding applications in systems and synthetic
biology require.

\section*{Statements and Declarations}

\subsection*{Competing interests}
The author declares that he has no competing financial interests or
personal relationships that could have appeared to influence the work
reported in this paper.

\subsection*{Funding}
This research received no specific grant from any funding agency in the
public, commercial, or not-for-profit sectors.

\subsection*{Data availability}
The genetic oscillator simulations of Section~\ref{ex:oscillator}
(Figure~\ref{fig:Oscillateur_original}) were conducted in R using the
\texttt{deSolve} package's \texttt{ode} function, with parameter values
$\lambda = 3$, $\kappa_1 = 3$, $\gamma_1 = 0.25$, $\kappa_2 = 4$,
$\gamma_2 = 0.5$, $\theta_1 = 4$, $\theta_2 = 3$, and initial conditions
$x_1(0) = x_2(0) = 1$. The genetic toggle-switch simulation of
Section~\ref{ex:toggle} (Figure~\ref{fig:toggle}) was performed in R with a
fourth-order Runge--Kutta integrator, using the symmetric parameters
$\kappa=10$, $\gamma=1$, $\theta=5$. The Traynard cell-cycle ODE simulation
of Section~\ref{ex:traynard} (Figure~\ref{fig:traynard_sim}) was implemented
in \textit{Mathematica}. No experimental datasets were generated; all parameter
values are drawn from published literature
in~\citep{madar2011negative,oehler1994quality,kauffman1969metabolic,%
albert2003topology,ingalls2013mathematical}.

\bibliographystyle{elsarticle-num-names}
\bibliography{mybibfile_corrected}

\end{document}